\setlist[itemize]{leftmargin=25pt}
\setlist[enumerate]{leftmargin=25pt}
\newcommand{\N}{\ensuremath{\mathbb{N}}}
\newcommand{\Z}{\ensuremath{\mathbb{Z}}}
\newcommand{\R}{\ensuremath{\mathbb{R}}}
\newcommand{\C}{\ensuremath{\mathbb{C}}}
\newcommand{\K}{\ensuremath{\mathbb{K}}}
\newcommand{\E}{\ensuremath{\mathbb{E}}}
\renewcommand{\P}{\ensuremath{\mathbb{P}}}
\newcommand{\mbs}{\boldsymbol}
\newcommand{\mc}{\mathcal}
\newcommand{\ms}{\mathscr}
\DeclareMathAlphabet{\mathpzc}{OT1}{pzc}{m}{it}
\newcommand{\mz}{\mathpzc}
\DeclarePairedDelimiter\abs{\lvert}{\rvert}
\DeclarePairedDelimiter\cbrace\{\}
\DeclarePairedDelimiter\ha()
\DeclarePairedDelimiter{\ip}\langle\rangle
\DeclarePairedDelimiter{\nrm}\lVert\rVert
\DeclarePairedDelimiter{\ceil}{\lceil}{\rceil}
\newcommand{\nrmb}[1]{\bigl\|#1\bigr\|}
\newcommand{\hab}[1]{\bigl(#1\bigr)}
\newcommand{\cbraceb}[1]{\bigl\{#1\bigr\}}
\newcommand{\ipb}[1]{\bigl\langle#1\bigr\rangle}
\newcommand{\nrms}[1]{\Bigl\|#1\Bigr\|}
\newcommand{\has}[1]{\Bigl(#1\Bigr)}
\newcommand{\cbraces}[1]{\Bigl\{#1\Bigr\}}
\newcommand{\bracs}[1]{\Bigl[#1\Bigr]}
\DeclareMathOperator{\loc}{loc}
\DeclareMathOperator{\supp}{supp}
\DeclareMathOperator{\ind}{\mathbf{1}}
\DeclareMathOperator{\UMD}{UMD}
\DeclareMathOperator{\BMO}{BMO}
\DeclareMathOperator{\RMF}{RMF}
\DeclareMathOperator{\Rad}{Rad}
\DeclareMathOperator{\Lat}{Lat}
\DeclareMathOperator*{\esssup}{ess\,sup}
\DeclareMathOperator{\diam}{diam}
\DeclareMathOperator{\Dini}{Dini}
\newcommand{\dd}{\hspace{2pt}\mathrm{d}}
\newcommand{\ddn}{\mathrm{d}}
\newcommand{\ee}{\mathrm{e}}
\renewcommand{\emptyset}{\varnothing}
\def\avint_#1{\mathchoice{\mathop{\kern 0.2em\vrule width 0.6em height 0.69678ex depth -0.58065ex \kern -0.8em \intop}\nolimits_{\kern -0.4em#1}}{\mathop{\kern 0.1em\vrule width 0.5em height 0.69678ex depth -0.60387ex \kern -0.6em \intop}\nolimits_{#1}} {\mathop{\kern 0.1em\vrule width 0.5em height 0.69678ex depth -0.60387ex \kern -0.6em \intop}\nolimits_{#1}} {\mathop{\kern 0.1em\vrule width 0.5em height 0.69678ex depth -0.60387ex \kern -0.6em \intop}\nolimits_{#1}}}
\DeclareFontFamily{U}{mathx}{\hyphenchar\font45}
\DeclareFontShape{U}{mathx}{m}{n}{<5> <6> <7> <8> <9> <10> <10.95> <12> <14.4> <17.28> <20.74> <24.88> mathx10}{}
\DeclareSymbolFont{mathx}{U}{mathx}{m}{n}
\DeclareMathAccent{\widecheck}{0}{mathx}{"71}
\newtheorem{theorem}{Theorem}
\newtheorem{corollary}[theorem]{Corollary}
\newtheorem{lemma}[theorem]{Lemma}
\newtheorem{proposition}[theorem]{Proposition}
\theoremstyle{remark}
\newtheorem{remark}[theorem]{Remark}
\theoremstyle{definition}
\newtheorem{definition}[theorem]{Definition}
\numberwithin{theorem}{section}
\numberwithin{equation}{section}
\title[On pointwise $\ell^r$-sparse domination]{On pointwise $\ell^r$-sparse domination in a space of homogeneous type}
\author{Emiel Lorist}
\thanks{The author is supported by the VIDI subsidy 639.032.427 of the Netherlands Organization for Scientific Research (NWO)}
\address{Delft Institute of Applied Mathematics \\ Delft University of Technology \\ P.O. Box 5031\\ 2600 GA Delft \\The Netherlands}
\email{e.lorist@tudelft.nl}
\begin{document}

\begin{abstract}
We prove a general sparse domination theorem in a space of homogeneous type, in which a vector-valued operator is controlled pointwise by a positive, local expression called a sparse operator. We use the structure of the operator to get sparse domination in which the usual $\ell^1$-sum in the sparse operator is replaced by an $\ell^r$-sum.

This sparse domination theorem is applicable to various operators from both harmonic analysis and (S)PDE. Using our main theorem, we prove the $A_2$-theorem for vector-valued Calder\'on--Zygmund operators in a space of homogeneous type, from which we deduce an anisotropic, mixed norm Mihlin multiplier theorem. Furthermore we show quantitative weighted norm inequalities for the Rademacher maximal operator, for which Banach space geometry plays a major role.
\end{abstract}

\keywords{Sparse domination, Space of homogeneous type, Muckenhoupt weight, Singular integral operator, Mihlin multiplier theorem, Rademacher maximal operator}

\subjclass[2010]{Primary: 42B20; Secondary: 42B15, 42B25, 46E40}

\maketitle

\section{Introduction}
The technique of controlling various operators by so-called sparse operators has proven to be a very useful tool to obtain (sharp) weighted norm inequalities in the past decade.
 The key feature in this approach is that a typically signed and non-local operator is dominated, either in norm, pointwise or in dual form, by a positive and local expression.

  The sparse domination technique comes from Lerner's work towards an alternative proof of the $A_2$-theorem, which was first proven by Hyt\"onen in \cite{Hy12}. In \cite{Le13a} Lerner applied his local mean oscillation decomposition approach to the $A_2$-theorem, estimating the norm of a Calder\'on-Zygmund operator by the norm of a sparse operator. This was later improved to a pointwise estimate independently by Conde-Alonso and Rey  \cite{CR16} and by Lerner and Nazarov \cite{LN15}. Afterwards, Lacey \cite{La17b} obtained the same result for a slightly larger class of Calder\'on-Zygmund operators by a stopping cube argument instead of the local mean oscillation decomposition approach. This argument was further refined by Hyt\"onen, Roncal and Tapiola  \cite{HRT17} and afterwards made strikingly clear by Lerner \cite{Le16}, where the following abstract sparse domination principle was shown:

 If $T$ is a bounded sublinear operator from $L^{p_1}(\R^n)$ to $L^{p_1,\infty}(\R^n)$ and the \emph{grand maximal truncation operator}
\begin{equation*}
  \mc{M}_T f(s) := \sup_{Q \ni s} \esssup_{s' \in Q} \,\abs{T(f \ind_{\R^n\setminus 3Q})(s')}, \qquad s \in \R^n
\end{equation*}
is bounded from $L^{p_2}(\R^n)$ to $L^{p_2,\infty}(\R^n)$ for some $1 \leq p_1,p_2 <\infty$, then there is an $\eta \in (0,1)$ such that for every compactly supported $f \in L^{p}(\R^n)$ with $p_0 := \max\cbrace{p_1, p_2}$ there exists an $\eta$-sparse family of cubes $\mc{S}$ such that
\begin{equation}\label{eq:sparsepointwiseintro}
  \abs{Tf(s)} \lesssim
  \sum_{Q \in \mc{S}} \ip{\abs{f}}_{p_0,Q}\ind_Q(s),\qquad  s \in \R^n.
\end{equation}
Here $\ip{{f}}_{p,Q}^p:= \avint_Q {f}^p :=\frac{1}{\abs{Q}}\int_Q{f}^p$ for $p \in (0,\infty)$ and positive $f \in L^p_{\loc}(\R^n)$ and we call a family of cubes $\mc{S}$ $\eta$-sparse if for every $Q \in \mc{S}$ there exists a measurable set $E_Q \subseteq Q$ such that $\abs{E_Q} \geq \eta\abs{Q}$ and such that the $E_Q$'s are pairwise disjoint.

 This sparse domination principle was further generalized in the recent paper \cite{LO19} by Lerner and Ombrosi, in which the authors  showed that the weak $L^{p_2}$-boundedness of the more flexible operator
\begin{equation*}
  \mc{M}_{T,\alpha}^{\#} f(s) := \sup_{Q \ni s} \esssup_{s',s'' \in Q} \,\abs{T(f \ind_{\R^n\setminus \alpha Q})(s') - T(f \ind_{\R^n\setminus \alpha Q})(s'')}, \qquad s \in \R^n
\end{equation*}
for some $\alpha \geq 3$ is already enough to deduce the pointwise sparse domination as in \eqref{eq:sparsepointwiseintro}. Furthermore, they relaxed the weak $L^{p_1}$-boundedness condition on $T$ to a condition in the spirit of the $T(1)$-theorem.

\subsection{Main result}
Our main result is a generalization of the main result in \cite{LO19} in the following four directions:
\begin{enumerate}[(i)]
  \item \label{it:generalizationhom} We replace $\R^n$ by a space of homogeneous type $(S,d,\mu)$.
  \item \label{it:generalizationvect} We let $T$ be an operator from $L^{p_1}(S;X)$ to $L^{p_1,\infty}(S;Y)$, where $X$ and $Y$ are Banach spaces.
  \item \label{it:generalizationsub} We use structure of the operator $T$ and geometry of the Banach space $Y$ to replace the $\ell^1$-sum in the sparse operator by an $\ell^r$-sum for $r \geq 1$.
  \item \label{it:generalizationlocal} We replace the truncation $T(f \ind_{\R^n\setminus \alpha Q})$ in the grand maximal truncation operator by an abstract localization principle.
\end{enumerate}
The extensions \ref{it:generalizationhom} and \ref{it:generalizationvect} are relatively straightforward. The main novelty of this paper is \ref{it:generalizationsub}, which controls the weight characteristic dependence that can be deduced from the sparse domination. Generalization \ref{it:generalizationlocal} will only make its appearance in Theorem \ref{theorem:localsparse} and can be used to make the associated grand maximal truncation operator easier to estimate in specific situations.

\bigskip

Let $(S,d,\mu)$ be a space of homogeneous type
and let $X$ and $Y$ be Banach spaces.
For a bounded linear operator $T$ from $L^{p_1}(S;X)$ to $L^{p_1,\infty}(S;Y)$ and $\alpha\geq 1$ we define the
following \emph{sharp grand maximal truncation operator}
\begin{align*}
  \mc{M}_{T,\alpha}^{\#}&f(s) :=\sup_{B\ni s} \esssup_{s',s''\in B}\, \nrmb{T(f\ind_{S\setminus {\alpha B}})(s')-T(f\ind_{S\setminus {\alpha B}})(s'')}_Y,\qquad s \in S,
\end{align*}
where the supremum is taken over all balls $B\subseteq S$ containing $s\in S$.  Our main theorem reads as follows.

\begin{theorem}\label{theorem:main}
Let $(S,d,\mu)$ be a space of homogeneous type and let $X$ and $Y$ be Banach spaces. Take $p_1,p_2,r \in [1,\infty)$ and set $p_0:=\max\cbrace{p_1, p_2}$. Take $\alpha \geq 3c_d^2/\delta$, where $c_d$ is the quasi-metric constant and $\delta$ is as in Proposition \ref{proposition:dyadicsystem}. Assume the following conditions:
\begin{itemize}
  \item $T$ is a bounded linear operator from $L^{p_1}(S;X)$ to $L^{p_1,\infty}(S;Y)$.
  \item $\mc{M}_{T,\alpha}^{\#}$ is a bounded operator from $L^{p_2}(S;X)$ to $L^{p_2,\infty}(S)$.
  \item There is a $C_r>0$ such that for disjointly and boundedly supported $f_1,\ldots,f_n \in L^{p_0}(S;X)$
\begin{equation*}
  \qquad \nrms{T\hab{\sum_{k=1}^n f_k}(s)}_Y\leq C_r \, \has{\sum_{k=1}^n \nrmb{Tf_k(s)}_Y^r}^{1/r},
  \qquad s \in S.
\end{equation*}
\end{itemize}
 Then there is an $\eta \in (0,1)$ such that for any boundedly supported $f \in L^{p_0}(S;X)$ there is an $\eta$-sparse collection of cubes $\mc{S}$ such that
\begin{align*}
   \nrm{ Tf(s)}_Y&\lesssim_{S,\alpha} C_T \,C_r\, \has{\sum_{Q \in \mc{S}} \ipb{\nrm{f}_X}_{p_0,Q}^r \ind_{Q}(s)}^{1/r}, \qquad s \in S,
\end{align*}
where $C_T={\nrm{T}_{L^{p_1}\to L^{p_1,\infty}} + \nrm{\mc{M}_{T,\alpha}^{\#}}_{L^{p_2}\to L^{p_2,\infty}}}$.
\end{theorem}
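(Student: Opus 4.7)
The plan is to adapt the stopping-cube argument of Lerner--Ombrosi \cite{LO19} to the present vector-valued setting on a space of homogeneous type, using the $\ell^r$-sublinearity hypothesis to propagate an $\ell^r$-sum through a telescoping decomposition. First, I would fix a dyadic lattice $\mc{D}$ on $(S,d,\mu)$ from Proposition~\ref{proposition:dyadicsystem}; the assumption $\alpha \geq 3c_d^2/\delta$ ensures that every dyadic cube is sandwiched between two balls of comparable measure, so that the sharp grand maximal truncation $\mc{M}^{\#}_{T,\alpha}$ can effectively be tested using dyadic cubes. Since $f$ is boundedly supported, I would cover $\supp f$ by finitely many disjoint top dyadic cubes and reduce to proving the sparse bound for $f$ supported in a single top cube $Q_0$.

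The sparse family is built iteratively. For a cube $Q$ and $f$ supported in $Q$, I form the exceptional set
\[
\Omega_Q := \cbraces{M^{\mc{D},Q}_{p_0}(\nrm{f}_X) > A\ipb{\nrm{f}_X}_{p_0,Q}} \cup \cbraces{\mc{M}^{\#}_{T,\alpha}f > AC_T\ipb{\nrm{f}_X}_{p_0,Q}} \cup \cbraces{\nrm{Tf}_Y > AC_T\ipb{\nrm{f}_X}_{p_0,Q}},
\]
where $M^{\mc{D},Q}_{p_0}$ is the local dyadic $p_0$-maximal operator and the last set uses the weak $(p_1,p_1)$-bound on $T$ combined with H\"older's inequality $\nrm{f}_{L^{p_1}(Q)} \leq \mu(Q)^{1/p_1}\ipb{\nrm{f}_X}_{p_0,Q}$ (valid since $p_0\geq p_1$). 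For $A$ sufficiently large, the weak-type bounds give $\mu(\Omega_Q)\leq \tfrac{1}{2}\mu(Q)$, and the maximal dyadic subcubes $\{P_j\}$ of $Q$ contained in $\Omega_Q$ are pairwise disjoint with $\sum_j\mu(P_j)\leq \tfrac{1}{2}\mu(Q)$. Iterating this on each $P_j$ produces the sparse family $\mc{S}$ with sparsity $\eta=\tfrac{1}{2}$, with good sets $E_Q := Q\setminus\bigcup_j P_j$ on which the three stopping conditions hold pointwise.

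For the final bound I telescope along a chain. For a.e.\ $s\in Q_0$, let $R_0 = Q_0 \supsetneq R_1\supsetneq\cdots\supsetneq R_{N_s}$ be the maximal chain of cubes of $\mc{S}$ containing $s$, so $s\in E_{R_{N_s}}$. The decomposition $f\ind_{Q_0} = f\ind_{R_{N_s}} + \sum_{i=0}^{N_s-1} f\ind_{R_i\setminus R_{i+1}}$ is into disjointly supported pieces, so the $\ell^r$-sublinearity assumption applied pointwise at $s$ yields
\[
\nrmb{Tf(s)}_Y \leq C_r\Bigl(\nrmb{T(f\ind_{R_{N_s}})(s)}_Y^r + \sum_{i=0}^{N_s-1}\nrmb{T(f\ind_{R_i\setminus R_{i+1}})(s)}_Y^r\Bigr)^{1/r}.
\]
The first term is $\lesssim C_T\ipb{\nrm{f}_X}_{p_0,R_{N_s}}$ directly from the $\nrm{Tf}_Y$-stopping at level $R_{N_s}$ (since $s\in E_{R_{N_s}}$). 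Once the annular estimate $\nrmb{T(f\ind_{R_i\setminus R_{i+1}})(s)}_Y\lesssim C_T\ipb{\nrm{f}_X}_{p_0,R_i}$ for all $s\in R_{i+1}$ is in place, the chain bound collapses into the desired sparse form $\nrmb{Tf(s)}_Y\lesssim C_TC_r\bigl(\sum_{Q\in\mc{S}}\ipb{\nrm{f}_X}_{p_0,Q}^r\ind_Q(s)\bigr)^{1/r}$.

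The main obstacle is this annular estimate. I would pick a ball $B$ of radius comparable to $\diam R_{i+1}$ with $R_{i+1}\subseteq\alpha B$ and $\mu(B\cap E_{R_i})>0$; such a ball exists thanks to $\alpha\geq 3c_d^2/\delta$ and $\mu(E_{R_i})\geq\tfrac{1}{2}\mu(R_i)$. After splitting
\[
T(f\ind_{R_i\setminus R_{i+1}})(s) = T((f\ind_{R_i})\ind_{S\setminus\alpha B})(s) + T(f\ind_{(R_i\cap\alpha B)\setminus R_{i+1}})(s),
\]
the sharp grand max applied with the ball $B$ at a good reference point $s^*\in B\cap E_{R_i}$ gives
\[
\nrmb{T((f\ind_{R_i})\ind_{S\setminus\alpha B})(s)}_Y \leq \nrmb{T(f\ind_{R_i})(s^*)}_Y + \nrmb{T(f\ind_{R_i\cap\alpha B})(s^*)}_Y + \mc{M}^{\#}_{T,\alpha}(f\ind_{R_i})(s^*).
\]
The first and third terms are $\lesssim C_T\ipb{\nrm{f}_X}_{p_0,R_i}$ via the $\nrm{Tf}_Y$- and $\mc{M}^{\#}_{T,\alpha}$-stopping at level $R_i$ evaluated at $s^*\in E_{R_i}$. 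The local residuals $\nrmb{T(f\ind_{R_i\cap\alpha B})(s^*)}_Y$ and $\nrmb{T(f\ind_{(R_i\cap\alpha B)\setminus R_{i+1}})(s)}_Y$ are controlled by Kolmogorov's inequality together with the weak $(p_1,p_1)$-bound on $T$, using the dyadic maximal stopping at level $R_i$ to bound $\ipb{\nrm{f}_X}_{p_0,\alpha B\cap R_i}\lesssim \ipb{\nrm{f}_X}_{p_0,R_i}$; $s^*$ is selected via Chebyshev so that the Kolmogorov average becomes a pointwise estimate at $s^*$. The Banach-valued nature of $X,Y$ plays no role beyond notation, $\nrm{\cdot}_Y$ behaving as a scalar absolute value throughout.
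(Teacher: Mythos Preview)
Your overall strategy---iterated stopping followed by telescoping along chains with the $r$-sublinearity---is the paper's strategy as well. The genuine gap is in the annular estimate. After your split you are left with
\[
\nrmb{T\bigl(f\ind_{(R_i\cap\alpha B)\setminus R_{i+1}}\bigr)(s)}_Y
\]
at a \emph{generic} point $s\in R_{i+1}$. Kolmogorov together with Chebyshev lets you choose a good point where such a quantity is small, but $s$ is not yours to choose: it is the fixed point at which you must verify the sparse bound. Nothing in your stopping data controls $T$ applied to this thin shell at $s$, and no weak-type information on $T$ yields a pointwise bound valid for every $s\in R_{i+1}$. As written, the argument does not close here.

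The paper avoids this residual entirely by aligning the telescoping with $\alpha$-dilations: it sets $T_Qf:=T(f\ind_{\alpha Q})$ and decomposes along the pieces $f\ind_{\alpha P_k\setminus\alpha P_{k+1}}$, so that each annular term is exactly $T_{P_k}f-T_{P_{k+1}}f$. Its oscillation on $P_{k+1}$ is then bounded by the (dyadically localised) sharp maximal operator at a reference point $s'$, and its value at $s'$ by $\nrm{T(f\ind_{\alpha P_k})(s')}_Y+\nrm{T(f\ind_{\alpha P_{k+1}})(s')}_Y$. Correspondingly the stopping sets are defined with $T(f\ind_{\alpha P})$ and $\mc{M}^{\#}_{T,\alpha}(f\ind_{\alpha P})$, and $s'$ is chosen good for \emph{both} levels $P_k$ and $P_{k+1}$ simultaneously (the paper checks this intersection has positive measure). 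With telescoping and stopping both indexed by $\alpha$-dilations, no local residual appears. Two secondary issues in your write-up are symptoms of the same misalignment: your stopping uses $Tf$ and $\mc{M}^{\#}_{T,\alpha}f$ with the global $f$, so for deep subcubes the weak-type bound no longer gives $\mu(\Omega_Q)\le\tfrac12\mu(Q)$; and for the oscillation estimate via $\mc{M}^{\#}_{T,\alpha}$ at the pair $(s,s^*)$ you need $s,s^*\in B$, i.e.\ $R_{i+1}\subseteq B$, not merely $R_{i+1}\subseteq\alpha B$.
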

As the assumption in the third bullet of Theorem \ref{theorem:main} expresses a form of sublinearity of the operator $T$ when $r=1$, we will call this assumption \emph{$r$-sublinearity}. Note that it is crucial that the constant $C_r$  is independent of $n \in \N$. If $C_r = 1$ it suffices to consider $n=2$.

\subsection{Sharp weighted norm inequalities}
One of the main reasons to study sparse domination of an operator is the fact that sparse bounds yield weighted norm inequalities and these weighted norm inequalities are sharp for many operators. Here sharpness is meant in the sense that for $p \in (p_0,\infty)$ we have  a $\beta\geq 0$ such that
\begin{equation}\label{eq:sharp}
  \nrm{T}_{L^p(S,w;X) \to L^p(S,w;Y)} \lesssim [w]_{A_{p/p_0}}^\beta, \qquad w \in A_{p/p_0}
\end{equation}
and \eqref{eq:sharp} is false for any $\beta' <\beta$.

The first result of this type was obtained by Buckley \cite{Bu93}, who showed that $\beta = \tfrac{1}{p-1}$ for the Hardy--Littlewood maximal operator. A decade later, the quest to find sharp weighted bounds attracted renewed attention because of the work of Astala, Iwaniec and Saksman \cite{AIS01}. They proved sharp regularity results for the solution to the Beltrami equation under the assumption that $\beta = 1$ for the Beurling--Ahlfors transform  for $p\geq 2$. This linear dependence on the $A_p$ characteristic for the Beurling--Ahlfors transform was shown by Petermichl and Volberg in \cite{PV02}.  Another decade later, after many partial results, sharp weighted norm inequalities were obtained for general Calder\'on--Zygmund operators by Hyt\"onen in \cite{Hy12} as discussed before.

In Section \ref{section:weights} we will prove weighted $L^p$-boundedness for the sparse operators appearing in Theorem \ref{theorem:main}. As a direct corollary from  Theorem \ref{theorem:main} and Proposition \ref{proposition:weights} we have:

\begin{corollary}\label{corollary:main}
  Under the assumptions of Theorem \ref{theorem:main} we have for all $p \in (p_0,\infty)$ and $w \in A_{p/p_0}$
\begin{align*}
  \nrm{T}_{L^p(S,w;X)\to L^p(S,w;Y)} &\lesssim C_T\,C_r\, [w]_{A_{p/p_0}}^{\max\cbraceb{\frac{1}{p-p_0},\frac{1}{r}}},
\end{align*}
where the implicit constant depends on $S,p_0,p,r$ and $\alpha$.
\end{corollary}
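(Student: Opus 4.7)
The plan is to combine the pointwise bound of Theorem \ref{theorem:main} with the weighted $L^p$-estimate for the scalar $\ell^r$-sparse operator
\[
A_{\mc{S},p_0}^r g(s) := \has{\sum_{Q \in \mc{S}} \ip{g}_{p_0,Q}^r \ind_Q(s)}^{1/r}
\]
provided by Proposition \ref{proposition:weights}. Crucially, the latter bound must be uniform over all $\eta$-sparse families $\mc{S}$, since the family produced by Theorem \ref{theorem:main} depends on the input function.

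\textbf{Reduction to boundedly supported functions.} First I would fix $p \in (p_0,\infty)$, $w \in A_{p/p_0}$, and take a boundedly supported $f \in L^p(S,w;X)$. Since $w \in A_{p/p_0}$, a standard computation gives $L^p(S,w;X) \hookrightarrow L^{p_0}_{\loc}(S;X)$, so the hypothesis of Theorem \ref{theorem:main} is met. I would postpone the extension to general $f \in L^p(S,w;X)$ to the end of the argument, where it follows from a routine exhaustion using an increasing sequence of balls together with the sublinearity-type control on $T$ and Fatou's lemma on $L^p(w)$.

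\textbf{Applying the pointwise sparse bound.} By Theorem \ref{theorem:main} there is an $\eta \in (0,1)$ (depending only on $S$ and $\alpha$) and an $\eta$-sparse collection $\mc{S}=\mc{S}(f)$ such that
\[
\nrm{Tf(s)}_Y \;\lesssim_{S,\alpha}\; C_T\,C_r\, A_{\mc{S},p_0}^r(\nrm{f}_X)(s),\qquad s\in S.
\]
Taking $L^p(S,w)$-norms on both sides gives
\[
\nrm{Tf}_{L^p(S,w;Y)} \;\lesssim_{S,\alpha}\; C_T\,C_r\, \nrmb{A_{\mc{S},p_0}^r(\nrm{f}_X)}_{L^p(S,w)}.
\]

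\textbf{Invoking the weighted bound for the sparse operator.} Proposition \ref{proposition:weights}, applied to the $\eta$-sparse family $\mc{S}$ and the positive scalar function $\nrm{f}_X \in L^p(S,w)$, yields
\[
\nrmb{A_{\mc{S},p_0}^r(\nrm{f}_X)}_{L^p(S,w)} \;\lesssim\; [w]_{A_{p/p_0}}^{\max\cbraces{\frac{1}{p-p_0},\frac{1}{r}}}\, \nrmb{\nrm{f}_X}_{L^p(S,w)} = [w]_{A_{p/p_0}}^{\max\cbraces{\frac{1}{p-p_0},\frac{1}{r}}}\, \nrm{f}_{L^p(S,w;X)},
\]
with implicit constant depending only on $S, p_0, p, r$ and $\eta$. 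Combining the two displays and absorbing $\eta$ into the $S,\alpha$-dependence gives the desired estimate for boundedly supported $f$.

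\textbf{Expected obstacle.} The only nontrivial step is to ensure that Proposition \ref{proposition:weights} has been proved with a constant depending on $\mc{S}$ only through its sparseness parameter $\eta$; this is standard for sparse operators but must be verified in the space-of-homogeneous-type setting. The density argument extending from boundedly supported $f$ to general $f \in L^p(S,w;X)$ is routine and should not require additional hypotheses, since the right-hand side is monotone in $f$ and $T$ maps $L^{p_1}$ to $L^{p_1,\infty}$.
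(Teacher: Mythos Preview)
Your proposal is correct and follows exactly the route the paper indicates: the corollary is stated as an immediate consequence of Theorem~\ref{theorem:main} and Proposition~\ref{proposition:weights}, and your argument spells out precisely that combination, with the routine density reduction made explicit. Your concern about the constant in Proposition~\ref{proposition:weights} depending on $\mc{S}$ only through $\eta$ is addressed by the proposition's statement itself (the implicit constant depends on $S,p_0,p,r$ and $\eta$).
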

As noted before the main novelty in Theorem \ref{theorem:main} is the introduction of the parameter $r \in [1,\infty)$. The $r$-sublinearity assumption in Theorem \ref{theorem:main} becomes more restrictive as $r$ increases and the conclusions of Theorem \ref{theorem:main} and Corollary \ref{corollary:main} consequently become stronger.  In order to check whether the dependence on the weight characteristic is sharp, one can employ e.g. \cite[Theorem 1.2]{LPR15}, which provides a lower bound for the best possible weight characteristic dependence in terms of the operator norm of $T$ from $L^p(S;X)$ to $L^p(S;Y)$. For some operators, like Littlewood--Paley or maximal operators, sharpness in the estimate in Corollary \ref{corollary:main} is attained for $r>1$ and thus Theorem \ref{theorem:main} can be used to show sharp weighted bounds for more operators than precursors like \cite[Theorem 1.1]{LO19}.

\subsection{How to apply our main result}
Let us outline the typical way how one applies Theorem \ref{theorem:main} (or the local and more general version in Theorem \ref{theorem:localsparse}) to obtain (sharp) weighted $L^p$-boundedness for an operator $T$:
\begin{enumerate}[(i)]
  \item If $T$ is not linear it is often \emph{linearizable}, which means that we can linearize it by putting part of the operator in the norm of the Banach space $Y$. For example if $T$ is a Littlewood--Paley square function we take $Y=L^2$ and if $T$ is a maximal operator we take $Y=\ell^\infty$. Alternatively one can apply Theorem \ref{theorem:localsparse}, which is a local and more abstract version of Theorem \ref{theorem:main} that does not assume $T$ to be linear.
  \item The weak $L^{p_1}$-boundedness of $T$ needs to be studied separately and is often already available in the literature.
  \item The operator $\mc{M}_{T,\alpha}^{\#}$ reflects the non-localities of the operator $T$. The weak $L^{p_2}$-boundedness of $\mc{M}_{T,\alpha}^{\#}$ requires an intricate study of the structure of the operator. In many examples $\mc{M}_{T,\alpha}^{\#}$ can be pointwise dominated by the Hardy--Littlewood maximal operator $M_{p_2}$, which is weak $L^{p_2}$-bounded. This is exemplified for Calder\'on--Zygmund operators in the proof of Theorem \ref{theorem:A2}.
      Sometimes one can choose a suitable localization in Theorem \ref{theorem:localsparse} such that the sharp maximal truncation operator is either zero (see Section \ref{section:maximal} on the Rademacher maximal operator), or pointwise dominated by $T$.
  \item The $r$-sublinearity assumption on $T$ is trivial for $r=1$, which suffices if one is not interested in quantitative weighted bounds. To check the $r$-sublinearity for some $r>1$ one needs to use the structure of the operator and often also the geometric properties of the Banach space $Y$ like type $r$.
      See, for example, the proofs of Theorems \ref{theorem:RMF} and \cite[Theorem 6.4]{LV19} how to check $r$-sublinearity in concrete cases.
\end{enumerate}

\subsection{Applications}
The main motivation to generalize the results in \cite{LO19} comes from the application in the recent work \cite{LV19} by Veraar and the author, in which Calder\'on--Zygmund theory is developed for stochastic singular integral operators. In particular, in \cite[Theorem 6.4]{LV19} Theorem \ref{theorem:main} is applied with $p_1=p_2=r=2$ to prove a stochastic version of the vector-valued $A_2$-theorem for Calder\'on--Zygmund operators, which yields new results in the theory of maximal regularity for stochastic partial differential equations. The fact that $r=2$ in \cite[Theorem 6.4]{LV19} was needed to obtain a sharp result motivated the introduction of the parameter $r$ in this paper.
In future work, further applications of Theorem \ref{theorem:main} to both deterministic and stochastic partial differential equations will be given, for which it is crucial that we allow spaces of homogeneous type instead of just $\R^n$, as in these applications $S$ is typically $\R_+\times \R^{n}$ with the parabolic metric.

In this paper we will focus on applications in harmonic analysis. We will  provide a few examples that illustrate the sparse domination principle nicely, and comment on further potential applications in Section \ref{section:further}.
\begin{itemize}
  \item As a first application of Theorem \ref{theorem:main} we prove an $A_2$-theorem for vector-valued Calder\'on--Zygmund operators with operator-valued kernel in a space of homogeneous type. The $A_2$-theorem for vector-valued Calder\'on--Zygmund operators with operator-valued kernel in Euclidean space has previously been proven in \cite{HH14} and the $A_2$-theorem for scalar-valued Calder\'on--Zygmund operators in spaces of homogeneous type in \cite{NRV13,AV14}. Our theorem unifies these two results.
  \item Using the $A_2$-theorem, we prove a weighted, anisotropic, mixed norm Mihlin multiplier theorem, which is a natural supplement to the recent results in \cite{FHL18} and is particularly useful in the study of spaces of smooth, vector-valued functions.
  \item  In our second application of Theorem \ref{theorem:main} we study sparse domination and quantitative weighted norm inequalities for the Rademacher maximal operator, extending the qualitative bounds in Euclidean space in \cite{Ke13}. The proof  demonstrates how one can use the geometry of the Banach space to deduce $r$-sublinearity for an operator. As a corollary, we deduce that the lattice Hardy--Littlewood and the Rademacher maximal operator are not comparable.
\end{itemize}

\subsection{Outline}
This paper is organized as follows: After introducing spaces of homogeneous type and dyadic cubes in such spaces in Section \ref{section:SHT}, we will set up our abstract sparse domination framework and deduce Theorem \ref{theorem:main} in Section  \ref{section:main}. We also give some further generalizations of our main results. In Section \ref{section:weights} we introduce weights and state weighted bounds for the sparse operators in the conclusions of Theorem \ref{theorem:main}, from which Corollary \ref{corollary:main} follows. To prepare for our application sections, we will discuss some preliminaries on e.g. Banach space geometry in Section \ref{section:Banachspace}. Afterwards we will
use our main result to prove the previously discussed applications in Sections \ref{section:A2}-\ref{section:maximal}.
Finally, in Section \ref{section:further} we discuss some potential further applications of our main result.

\section{Spaces of homogeneous type}\label{section:SHT}
A space of homogeneous type $(S,d,\mu)$, originally introduced by Coifman and Weiss in \cite{CW71}, is a set $S$ equipped with a quasi-metric $d$ and a doubling Borel measure $\mu$. That is,
 a metric $d$  which instead of the triangle inequality satisfies
\begin{equation*}
  d(s,t) \leq c_d\, \hab{d(s,u)+d(u,t)}, \qquad s,t,u\in S
\end{equation*}
for some $c_d\geq 1$, and a Borel measure $\mu$ that satisfies the doubling property
\begin{equation*}
  \mu\hab{B(s,2\rho)} \leq c_\mu \,\mu\hab{B(s,\rho)}, \qquad s \in S,\quad \rho>0
\end{equation*}
for some $c_\mu\geq 1$, where $B(s,\rho):=\cbrace{t \in S:d(s,t)<\rho}$ is the ball around $s$ with radius $\rho$. Throughout this paper we will assume additionally that all balls $B\subseteq S$ are Borel sets and that we have $0 <\mu(B)<\infty$.

It was shown in \cite[Example 1.1]{St15} that it can indeed happen that balls are not Borel sets in a quasi-metric space. This can be circumvented by taking topological closures and adjusting the constants $c_d$ and $c_\mu$ accordingly. However, to simplify matters we just assume all balls to be Borel sets and leave the necessary modifications if this is not the case to the reader.
The size condition on the measure of a ball ensures that taking the average $\ip{{f}}_{p,B}$ of a positive function $f \in L^p_{\loc}(S)$  over a ball $B\subseteq S$ is always well-defined.

As $\mu$ is a Borel measure, i.e. a measure defined on the Borel $\sigma$-algebra of the quasi-metric space $(S,d)$, the Lebesgue differentiation theorem holds and as a consequence the continuous functions with bounded support are dense in $L^p(S)$ for all $p \in [1,\infty)$. The Lebesgue differentiation theorem and consequently our results remain valid if $\mu$ is a measure defined on a $\sigma$-algebra $\Sigma$ that contains the Borel $\sigma$-algebra as long as the measure space $(S,\Sigma,\mu)$ is Borel semi-regular. See \cite[Theorem 3.14]{AM15} for the details.

Throughout we will write that an estimate depends on $S$ if it depends on $c_d$ and $c_\mu$. For a thorough introduction to and a list of examples of spaces of homogeneous type we refer to the monographs of Christ \cite{Ch90} and Alvarado and Mitrea \cite{AM15}.

\subsection{Dyadic cubes}
Let $0<c_0\leq C_0 <\infty$ and $0<\delta<1$. Suppose that for $k \in \Z$ we have an index set $J_k$, pairwise disjoint collection $\ms{D}_k = \cbrace{Q^j_k}_{j \in J_k}$ of measurable sets and a collection of points $\cbrace{z_j^k}_{j \in J_k}$. We call $\ms{D} := \bigcup_{k \in \Z}\ms{D}_k$ a \emph{dyadic system} with parameters $c_0$, $C_0$ and $\delta$ if it satisfies the following properties:
\begin{enumerate}[(i)]
\item For all $k \in \Z$ we have $$S = \bigcup_{j \in J_k} Q_j^k;$$
\item \label{it:dyadicsystem2}For $k \geq l$, $Q \in \ms{D}_k$ and $Q' \in \ms{D}_l$ we either have $Q\cap Q' =\emptyset$ or $Q \subseteq Q'$;
\item \label{it:dyadicsystem3}For each $k \in \Z$ and $j \in J_k$ we have
\begin{equation*}
  B(z_j^k,c_0 \delta^k) \subseteq Q_j^k \subseteq B(z_j^k, C_0 \delta^k);
\end{equation*}
\end{enumerate}
We will call the elements of a dyadic system $\ms{D}$ cubes and for a cube $Q \in \ms{D}$ we define the \emph{restricted dyadic system} $\ms{D}(Q):=\cbrace{P\in \ms{D}:P\subseteq Q}$. We will say that an estimate depends on $\ms{D}$ if it depends on the parameters $c_0$, $C_0$ and $\delta$.

One can view $z_j^k$ and $\delta^k$ as the center and side length of a cube $Q_j^k \in \ms{D}_k$. These have to be with respect to a specific $k \in \Z$, as this $k$ may not be unique. We therefore think of a cube $Q \in \ms{D}$ to also encode the information of its center $z$ and generation $k$.
The structure of individual dyadic cubes $Q \in \ms{D}$ in a space of homogeneous type can be very messy and consequently the dilations of such cubes do not have a canonical definition. Therefore for a cube $Q \in \ms{D}$ with center $z$ and of generation $k$ we define the \emph{dilations $\alpha Q$} for $\alpha\geq 1$ as
\begin{equation*}
  \alpha Q := B\hab{z,\alpha \cdot C_0 \delta^k},
\end{equation*}
which are actually dilations of the ball that contains $Q$ by property \ref{it:dyadicsystem3} of a dyadic system.
\bigskip

When $S=\R^n$ and $d$ is the Euclidean distance, the standard dyadic cubes form a dyadic system and, combined with its translates over $\alpha \in \cbrace{0,\frac{1}{3},\frac23}^n$, it holds that any ball in $\R^n$ is contained in a cube of comparable size from one of these dyadic systems (see e.g. \cite[Lemma 3.2.26]{HNVW16}). We will rely on the following proposition for the existence of dyadic systems with this property in a general space of homogeneous type. For the proof and a more detailed discussion we refer to \cite{HK12}.

\begin{proposition}\label{proposition:dyadicsystem}
  Let $(S,d,\mu)$ be a space of homogeneous type. There exist $0<c_0\leq C_0<\infty$, $\gamma\geq 1$, $0<\delta<1$ and $m \in \N$ such that there are dyadic systems $\ms{D}^1,\ldots,\ms{D}^m$ with parameters $c_0$, $C_0$ and $\delta$, and with the property that for each $s \in S$ and $\rho>0$ there is a $j\in \cbrace{1,\ldots,m}$ and a $Q \in \ms{D}^j$  such that $$B(s,\rho) \subseteq Q, \qquad\text{and}\qquad  \diam(Q) \leq \gamma \rho.$$
\end{proposition}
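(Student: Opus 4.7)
The plan is to follow the construction of Hytönen--Kairema \cite{HK12}, which refines the classical dyadic cube construction of Christ in a space of homogeneous type. First I would fix $\delta \in (0,1)$ sufficiently small in terms of the quasi-metric constant $c_d$ (say, small enough that $12 c_d^3 \delta \leq 1$). For each $k \in \Z$, choose a maximal $\delta^k$-separated subset $\cbrace{z_j^k}_{j \in J_k} \subseteq S$; by maximality the balls $B(z_j^k,\delta^k/2)$ are pairwise disjoint while the balls $B(z_j^k,\delta^k)$ cover $S$. One then assembles the cubes $Q_j^k$ by a greedy hierarchical procedure: one assigns to each child center $z_j^k$ a unique parent $z_i^{k-1}$ (minimizing distance, breaking ties arbitrarily), builds preliminary cubes as Voronoi-type cells associated to the centers at the finest scale, and enlarges them consistently with the parent-child relation. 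The smallness of $\delta$ ensures that the inclusions $B(z_j^k, c_0 \delta^k) \subseteq Q_j^k \subseteq B(z_j^k, C_0 \delta^k)$ hold with constants $c_0, C_0$ depending only on $c_d$ and $\delta$, and the nesting property \ref{it:dyadicsystem2} is built in by construction.

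The second, and genuinely novel, ingredient is producing the finite family $\ms{D}^1,\ldots,\ms{D}^m$ with the ``covering by comparable cubes'' property. In $\R^n$ this is achieved by the $3^n$ translates of the standard grid; in a general space of homogeneous type the analogue is to run Christ's construction $m$ times, where at the $j$-th iteration one perturbs the choice of centers so that their positions mod $\delta^k$ (suitably interpreted) cycle through a finite collection of ``phases.'' The number of phases needed is controlled by a doubling-type pigeonhole argument: balls of radius $\delta^k$ can overlap only boundedly many cubes of generation $k$, and this bound depends only on $c_\mu$ and $c_d$. One then shows that for any $s \in S$ and any scale $\rho$, at least one of the $m$ systems produces a generation-$k$ center $z$ with $d(s,z)$ much smaller than $\delta^k \approx \rho$, so that the corresponding cube $Q \ni s$ from that system engulfs $B(s,\rho)$ while still having diameter $\leq 2 C_0 \delta^k \leq \gamma \rho$.

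The main obstacle is the combinatorial step underlying this pigeonhole argument: in $\R^n$ the translation group makes the ``phases'' transparent, but in a bare space of homogeneous type one must replace translations by a carefully engineered re-choice of nets at each scale, with the property that the offsets compose coherently across generations. Concretely, one has to show that if, for a given $s$ and $\rho$, every system $\ms{D}^j$ fails to produce a sufficiently centered cube, then the centers of all $m$ systems must cluster near a forbidden annulus around $s$; bounding the number of such clusterings by the doubling constant then fixes $m$. Once this is in place, choosing $\gamma := 2 C_0/\delta$ (or similar) yields the stated diameter bound, and the remaining verifications reduce to tracking constants through Christ's original argument.
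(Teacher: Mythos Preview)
The paper does not actually prove this proposition: it explicitly states ``For the proof and a more detailed discussion we refer to \cite{HK12}'' and moves on. Your proposal is precisely a sketch of the Hyt\"onen--Kairema construction from that reference, so in that sense you are taking the same approach as the paper---only with considerably more detail than the paper itself provides.

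Your outline of the construction is broadly accurate: the single-system part is Christ's construction, and the adjacent-systems part is the genuine contribution of \cite{HK12}. One small caution: your description of the multiple systems as arising from ``phases'' obtained by ``perturbing the choice of centers'' is a helpful heuristic but slightly obscures how \cite{HK12} actually proceeds. There the adjacent systems are built by fixing, for each generation $k$, a distinguished reference point and then running the parent-assignment rule relative to a coherent family of such reference points (encoded by a sequence in a finite alphabet); the finitely many systems correspond to the finitely many such sequences needed, and the pigeonhole step is carried out at the level of these reference points rather than via an annulus-clustering argument as you describe. Your sketch would still lead you to the right place, but if you intend to fill in the details you should follow \cite{HK12} directly rather than attempt to improvise the combinatorics, since the coherence-across-generations condition is delicate.
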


The following covering lemma will be used in the proof of our main theorem:
\begin{lemma}\label{lemma:covering}
Let $(S,d,\mu)$ be a space of homogeneous type and $\ms{D}$ a dyadic system with parameters $c_0$, $C_0$ and $\delta$. Suppose that $\diam(S) = \infty$, take  $\alpha \geq {3c_d^2/\delta}$ and let $E \subseteq S$ satisfy $0<\diam(E)<\infty$. Then there exists a partition $\mc{D} \subseteq \ms{D}$ of $S$ such that $E \subseteq \alpha Q$ for all $Q \in \mc{D}$.
\end{lemma}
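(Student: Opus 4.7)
Fix any $x_0 \in E$ and set $R := \diam(E) \in (0,\infty)$. For $s \in S$ and $k \in \Z$ write $Q^k(s)$ for the unique cube of $\ms{D}_k$ containing $s$, with center $z^k(s)$, and define
\[
\mc{D}^* := \cbraceb{Q \in \ms{D} : E \subseteq \alpha Q}.
\]
The construction I propose is: for each $s \in S$, set
\[
k(s) := \max\cbraceb{k \in \Z : \text{every child of } Q^{k'-1}(s) \text{ lies in } \mc{D}^* \text{ for all } k' \leq k},
\]
then let $Q(s) := Q^{k(s)}(s)$ and $\mc{D} := \{Q(s) : s \in S\}$. Because $Q(s)$ is itself a child of $Q^{k(s)-1}(s)$, the defining property gives $Q(s) \in \mc{D}^*$, so $E \subseteq \alpha Q(s)$ is immediate.

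Two routine checks show that $k(s)$ is a finite integer. For $k$ large, $\diam(\alpha Q^k(s)) \leq 2c_d \alpha C_0 \delta^k < R$, so $Q^k(s) \notin \mc{D}^*$ and the defining set is bounded above. Non-emptiness is where the hypothesis $\alpha \geq 3c_d^2/\delta$ is used: for any cube $Q \subseteq Q^{k-1}(s)$ in $\ms{D}_k$ with center $z$ and any $y \in E$, two applications of the quasi-triangle inequality give $d(z,s) < 2c_d C_0\delta^{k-1}$ and $d(s,y) \leq c_d(d(s,x_0)+R)$, hence
\[
d(z,y) \leq 2c_d^2 C_0 \delta^{k-1} + c_d^2\bigl(d(s,x_0)+R\bigr),
\]
and therefore $E \subseteq \alpha Q$ holds as soon as $\tfrac{2c_d^2}{\delta} + \tfrac{c_d^2(d(s,x_0)+R)}{C_0 \delta^k} < \alpha$. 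Since $2c_d^2/\delta < 3c_d^2/\delta \leq \alpha$, this is satisfied for all sufficiently negative $k$, and such $k$ exist thanks to $\diam(S) = \infty$.

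The main step, and what I expect to be the principal obstacle, is pairwise disjointness of $\mc{D}$; the reason I tie $k(s)$ to \emph{all} children of the parent rather than just the cube through $s$ is precisely to make this step go through. Covering is obvious from $s \in Q(s)$. Suppose $Q(s_1) \cap Q(s_2) \neq \emptyset$; by the nesting property of the dyadic system one cube is contained in the other, so after relabelling $Q(s_1) \subseteq Q(s_2)$ and $k(s_1) \geq k(s_2)$. If the inclusion were strict then $k(s_2)+1 \leq k(s_1)$ and $s_1 \in Q(s_2)$, forcing $Q^{k'}(s_1) = Q^{k'}(s_2)$ for every $k' \leq k(s_2)$. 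Applying the defining property of $k(s_1)$ at generation $k' = k(s_2)+1$ shows that every child of $Q^{k(s_2)}(s_1) = Q(s_2)$ lies in $\mc{D}^*$; combined with the conditions at $k' \leq k(s_2)$ that hold by the definition of $k(s_2)$, this means $k(s_2)+1$ also satisfies the condition in the maximum defining $k(s_2)$, contradicting its maximality. Hence $Q(s_1) = Q(s_2)$, and $\mc{D}$ is the desired partition.
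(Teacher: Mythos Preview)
Your proof is correct but takes a different route from the paper's. The paper selects, for each $s$, the \emph{minimal} generation $k_s$ such that $E \not\subseteq 2c_d\, Q_s^{k_s}$; because this condition depends only on the cube and not on the point $s$, disjointness is a two-line argument (if two selected cubes meet, the containing one's generation lies in $K_s$ for both points, forcing equality). The containment $E \subseteq \alpha Q_s^{k_s}$ is then recovered at the end from $E \subseteq 2c_d\, Q_s^{k_s-1}$ and the geometric estimate $2c_d\, Q_s^{k_s-1} \subseteq (3c_d^2/\delta)\, Q_s^{k_s}$, which is exactly where $\alpha \geq 3c_d^2/\delta$ enters.

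You instead work with the $\alpha$-dilation from the outset and take a \emph{maximum}; since the condition ``$E \subseteq \alpha Q$'' for a single cube through $s$ is not intrinsic to the cube (siblings may behave differently), you compensate by requiring it for \emph{all} children of the parent, which restores intrinsicness and makes your disjointness argument go through. This is heavier bookkeeping but perfectly valid, and it has the minor advantage that $E \subseteq \alpha Q(s)$ is immediate rather than requiring a final geometric step. One small remark: your attribution ``such $k$ exist thanks to $\diam(S)=\infty$'' is slightly off --- the existence of sufficiently negative $k$ with the required estimate is automatic from $\delta^k \to \infty$; the hypothesis $\diam(S)=\infty$ is there only to make the lemma non-trivial (otherwise $S$ itself is a dyadic cube and $\mc{D}=\{S\}$ works).
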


\begin{proof}
  For $s \in S$ and $k \in \Z$ let $Q_s^k\in \ms{D}_k$ be the unique cube such that $s \in Q_s^k$ and denote its center by $z_{s}^k$. Define
  \begin{equation*}
    K_s := \cbraceb{k \in \Z: E \not\subseteq 2 c_d Q_s^k},
  \end{equation*}
  where $c_d$ is the quasi-metric constant.
  If $k \in \Z$ is such that
  $$\diam(2c_dQ_s^k) \leq 4c_d^2C_0\delta^k < \diam(E),$$ then $E\not\subseteq  2c_dQ_s^k$, i.e. $k \in K_s$ so is $K_s$ non-empty. On the other hand if $k \in \Z$ is such that
$
    C_0\delta^k > \sup_{s' \in E} d(s,s'),
$
  then
  \begin{equation*}
 \sup_{s'\in E} d(s',z_{s}^k) \leq c_d \,\hab{\sup_{s' \in E} d(s,s') + d(s, z_{s}^k)} \leq 2c_dC_0\delta^k
  \end{equation*}
  so $E \subseteq 2c_dQ_s^k$ and thus $k \notin K_s$. Therefore $K_s$ is bounded from below.

  Define $k_s := \min K_s$ and set $\mc{D}:= \cbrace{Q_s^{k_s}:s \in S}$. Then $\mc{D}$ is a partition of $S$. Indeed, suppose that for $s,s'\in S$ we have $Q_s^{k_s} \cap Q_{s'}^{k_{s'}}\neq \emptyset$. Then using property \ref{it:dyadicsystem2} of a dyadic system we may assume without loss of generality that $Q_s^{k_s} \subseteq Q_{s'}^{k_{s'}}$. Property \ref{it:dyadicsystem2} of a dyadic system then implies that $k_s \geq k_{s'}$. In particular $s \in Q_{s'}^{k_{s'}}$, so by the minimality of $k_s$ we must have $k_s = k_{s'}$. Therefore since the elements of $\ms{D}_{k_s}$ are pairwise disjoint we can conclude $Q_s^{k_s} = Q_{s'}^{k_{s'}}$.

  To conclude note that $z_s^{k_s} \in Q_s^{k_s} \subseteq Q_s^{k_s-1}$ by property \ref{it:dyadicsystem2} of a dyadic system, so $d(z_s^{k_s-1},z_s^{k_s}) \leq C_0\delta^{k_s-1}$. Therefore using the minimality of $k_s$ we obtain
  $$E \subseteq 2c_dQ_s^{k_s-1} = B(z_s^{k_s-1},2c_dC_0\delta^{k_s-1}) \subseteq B\has{z_s^{k_s},\frac{3c_d^2}{\delta} \cdot C_0\delta^{k_s}} \subseteq \alpha Q_s^{k_s},$$
  which finishes the proof.
\end{proof}

\subsection{The Hardy--Littlewood maximal operator}\label{subsection:HL}
On a space of homogeneous type  $(S,d,\mu)$  with a dyadic system $\ms{D}$ we define the \emph{dyadic Hardy--Littlewood maximal operator} for $f \in L^1_{\loc}(S)$ by
\begin{equation*}
  M^{\ms{D}}f(s):= \sup_{Q\in \ms{D}: s \in Q} \ipb{\abs{f}}_{1,Q}, \qquad s \in S.
\end{equation*}
By Doob's maximal inequality (see e.g. \cite[Theorem 3.2.2]{HNVW16}) $M^{\ms{D}}$ is strong $L^p$-bounded for all $p\in (1,\infty)$ and weak $L^1$-bounded. We define the (non-dyadic) \emph{Hardy--Littlewood maximal operator} for $f \in L^1_{\loc}(S)$ by
\begin{equation*}
  Mf(s):= \sup_{B \ni s} \,\ipb{\abs{f}}_{1,Q}, \qquad s \in S,
\end{equation*}
where the supremum is taken over all balls $B \subseteq S$ containing $s$. By Proposition \ref{proposition:dyadicsystem} there are dyadic systems $\ms{D}^1,\ldots,\ms{D}^m$ such that
\begin{equation*}
  Mf(s) \lesssim_S \sum_{j=1}^m M^{\ms{D}}f(s), \qquad s \in S,
\end{equation*}
so $M$ is also strong $L^p$-bounded for $p\in(1,\infty)$ and weak $L^1$-bounded. For $p_0 \in [1,\infty)$ and $f \in L^{p_0}_{\loc}(S)$ we define
\begin{equation*}
  M_{p_0}f(s) := \sup_{B \ni s} \,\ipb{\abs{f}}_{p_0,Q} = M\hab{\abs{f}^{p_0}}(s)^{1/p_0}, \qquad s \in S,
\end{equation*}
which is strong $L^p$-bounded for $p \in(p_0,\infty)$ and weak $L^{p_0}$-bounded. This follows from the boundedness of $M$ by rescaling.

\section{Pointwise \texorpdfstring{$\ell^r$}{lr}-sparse domination}\label{section:main}
In this section we will prove a local version of the sparse domination result in Theorem \ref{theorem:main}, from which we will deduce Theorem \ref{theorem:main} by a covering argument using Lemma \ref{lemma:covering}. This local version will use an abstract localization of the operator $T$, since it depends upon the operator at hand as to the most effective localization. For example in the study of a Calder\'on--Zygmund operator it is convenient to localize the function inserted into $T$, for a maximal operator it is convenient to localize the supremum in the definition of the maximal operator and for a Littlewood--Paley operator it is most suitable to localize the defining integral.

\begin{definition}
  Let $(S,d,\mu)$ be a space of homogeneous type with a dyadic system $\ms{D}$, let $X$ and $Y$ be Banach spaces, $p \in [1,\infty)$ and $\alpha \geq 1$. For a bounded operator
$$T\colon L^{p}(S;X) \to L^{p,\infty}(S;Y)$$ we say that a family of operators $\cbrace{T_Q}_{Q \in \ms{D}}$ from $L^p(S;X)$ to $L^{p,\infty}(Q;Y)$ is an \emph{$\alpha$-localization family of $T$} if for all $Q\in \ms{D}$ and $f \in L^{p}(S;X)$  we have
\begin{align*}
 T_Q(f \ind_{\alpha Q})(s)&=T_Qf(s), & & s \in Q , & & \text{(Localization)}\\
  \nrmb{T_Q(f\ind_{\alpha Q})(s)}_Y &\leq \nrmb{T(f\ind_{\alpha Q})(s)}_Y, & &s \in Q, & & \text{(Domination)}
\end{align*}
For $Q,Q'\in \ms{D}$ with $Q' \subseteq Q$ we  define the difference operator
\begin{align*}
T_{Q\setminus Q'}f(s)&:= T_{Q}f(s) - T_{Q'}f(s), \qquad s \in Q'.
\end{align*}
 and for $Q\in \ms{D}$ the \emph{localized sharp grand maximal truncation operator}
 \begin{align*}
  \mc{M}_{T,Q}^{\#}&f(s) :=\sup_{\substack{Q'\in \ms{D}(Q):\\s \in Q'}}\, \esssup_{s',s'' \in Q'} \,\nrmb{(T_{Q\setminus Q'}) f(s')-(T_{Q\setminus Q'}) f(s'')}_Y, \qquad s \in S.
\end{align*}
\end{definition}

In order to obtain interesting results, one needs to be able to recover the boundedness of $T$ from the boundedness of  $T_Q$ uniformly in $Q \in \ms{D}$.  The canonical example of an $\alpha$-localization family is
\begin{align*}
T_Qf(s) &:=T(f \ind_{\alpha Q})(s), \qquad s \in Q.
\end{align*}
for all $Q \in \ms{D}$ and it is exactly this choice that will lead to Theorem \ref{theorem:main}.
We are now ready to prove our main result, which is a local, more general version of Theorem \ref{theorem:main}.

\begin{theorem}\label{theorem:localsparse}
Let $(S,d,\mu)$ be a space of homogeneous type with dyadic system $\ms{D}$ and let  $X$ and $Y$ be Banach spaces. Take $p_1,p_2,r \in [1,\infty)$, set $p_0:=\max\cbrace{p_1, p_2}$ and take $\alpha \geq 1$. Suppose that
\begin{itemize}
  \item $T$ is a bounded operator from $L^{p_1}(S;X)$ to $L^{p_1,\infty}(S;Y)$ with $\alpha$-localization family $\cbrace{T_Q}_{Q \in \ms{D}}$.
  \item $\mc{M}_{T,Q}^{\#}$ is bounded from $L^{p_2}(S;X)$ to $L^{p_2,\infty}(S)$ uniformly in $Q \in \ms{D}$.
  \item For all $Q_1,\ldots,Q_n \in \ms{D}$ with $Q_n\subseteq \ldots\subseteq Q_1$ and any $f \in L^p(S;X)$
\begin{equation*}
  \qquad \nrmb{T_{Q_1}f(s)}_Y\leq C_r \has{\nrmb{T_{Q_n}{f}(s)}_Y^r+\sum_{k=1}^{n-1}\nrmb{T_{Q_{k}\setminus Q_{k+1}}f(s)}_Y^r}^{1/r},\quad s \in Q_n.
\end{equation*}
\end{itemize}
 Then for any $f \in L^{p_0}(S;X)$ and $Q\in \ms{D}$  there exists a $\frac12$-sparse collection of dyadic cubes $\mc{S}\subseteq \ms{D}(Q)$ such that
\begin{align*}
  \nrmb{T_Qf(s)}_Y\lesssim_{S,\ms{D},\alpha} C_T\,C_r \,
  \has{ \sum_{P \in \mc{S}} \ipb{\nrm{f}_X}_{p_0,\alpha P}^r \ind_P(s)}^{1/r},\qquad s \in Q,
  \end{align*}
with $C_T:={\nrm{T}_{L^{p_1}\to L^{p_1,\infty}} + \sup_{P \in \ms{D} } \nrm{\mc{M}_{T,P}^{\#}}_{L^{p_2}\to L^{p_2,\infty}}}.$
\end{theorem}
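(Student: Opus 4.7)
The plan is a recursive stopping-time construction producing $\mc{S}\subseteq\ms{D}(Q)$, followed by telescoping via the $r$-sublinearity. For each $P\in\ms{D}(Q)$ set $A_P:=\ip{\nrm{f}_X}_{p_0,\alpha P}$ and
\[
\Omega_P := \cbraceb{s\in P:\nrm{T_Pf(s)}_Y>C_1A_P} \cup \cbraceb{s\in P:\mc{M}_{T,P}^{\#}f(s)>C_2A_P},
\]
where $C_1\simeq\nrm{T}_{L^{p_1}\to L^{p_1,\infty}}$ and $C_2\simeq\sup_{P\in\ms{D}}\nrm{\mc{M}_{T,P}^{\#}}_{L^{p_2}\to L^{p_2,\infty}}$ are chosen so that $\mu(\Omega_P)\leq\eta\mu(P)$ for a small $\eta=\eta(S,\ms{D},\alpha)>0$. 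The bound on the first set reduces, via the domination and localization properties, to the weak-type bound of $T$ applied to $f\ind_{\alpha P}$; H\"older's inequality (using $p_1\leq p_0$) together with $\mu(\alpha P)\lesssim_{S,\alpha}\mu(P)$ yields $\mu\cbraceb{s\in P:\nrm{T_Pf(s)}_Y>C_1A_P}\lesssim(\nrm{T}/C_1)^{p_1}\mu(P)$, and an analogous computation handles $\mc{M}_{T,P}^{\#}f$ (after observing that this quantity on $P$ depends on $f$ only through a boundedly enlarged dilation $\beta P$, with $\beta=\beta(\alpha,c_d)$ chosen so that $\alpha P'\subseteq\beta P$ for every $P'\in\ms{D}(P)$). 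The children $\mc{F}_P$ in the sparse tree are then the maximal proper dyadic subcubes $P'\subseteq P$ with $\mu(P'\cap\Omega_P)>\theta\mu(P')$, where $\theta$ is chosen small relative to the inverse dyadic doubling constant so that maximality forces $\mu(P'\setminus\Omega_P)\geq\tfrac34\mu(P')$, while disjointness yields $\sum_{P'\in\mc{F}_P}\mu(P')\leq\tfrac12\mu(P)$. Recursing from $Q$ builds $\mc{S}$, which is $\tfrac12$-sparse via the pairwise disjoint survivor sets $E_P:=P\setminus\bigcup\mc{F}_P$ of measure at least $\tfrac12\mu(P)$.

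The pivotal step is the pointwise transfer estimate: for $P\in\mc{S}$, $P'\in\mc{F}_P$ and a.e.\ $s\in P'$,
\[
\nrm{T_{P\setminus P'}f(s)}_Y \lesssim C_T(A_P+A_{P'}).
\]
To prove it, pick $s_0\in(P'\setminus\Omega_P)\cap E_{P'}$; inclusion-exclusion gives this intersection measure at least $\tfrac14\mu(P')>0$. At such $s_0$ one simultaneously has $\nrm{T_Pf(s_0)}_Y\leq C_1A_P$, $\nrm{T_{P'}f(s_0)}_Y\leq C_1A_{P'}$ and $\mc{M}_{T,P}^{\#}f(s_0)\leq C_2A_P$. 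The first two bounds and the triangle inequality give $\nrm{T_{P\setminus P'}f(s_0)}_Y\leq C_1(A_P+A_{P'})$, and the crucial use of the sharp maximal truncation is that $Q'=P'$ is admissible in the supremum defining $\mc{M}_{T,P}^{\#}f(s_0)$, so the oscillation of $T_{P\setminus P'}f$ on $P'$ is bounded by $\mc{M}_{T,P}^{\#}f(s_0)\leq C_2A_P$; combining these at a.e.\ $s\in P'$ yields the claim.

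For a.e.\ $s\in Q$, the chain $Q=P_0\supsetneq P_1\supsetneq\cdots$ of cubes in $\mc{S}$ containing $s$ is finite (since $\mu(P_{k+1})\leq\tfrac12\mu(P_k)$ forces $\mu(\bigcap_k P_k)=0$), terminating at some $P_N$ with $s\in E_{P_N}$ and hence $\nrm{T_{P_N}f(s)}_Y\leq C_1A_{P_N}$. Applying the $r$-sublinearity to $P_0\supseteq\cdots\supseteq P_N$ and substituting the transfer estimate yields
\[
\nrm{T_Qf(s)}_Y \leq C_r\has{\nrm{T_{P_N}f(s)}_Y^r+\sum_{k=0}^{N-1}\nrm{T_{P_k\setminus P_{k+1}}f(s)}_Y^r}^{1/r} \lesssim C_rC_T\has{\sum_{k=0}^{N}A_{P_k}^r}^{1/r},
\]
and the rightmost sum equals $\sum_{P\in\mc{S}}\ip{\nrm{f}_X}_{p_0,\alpha P}^r\ind_P(s)$, as desired. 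The main obstacle I anticipate is the careful calibration of $\eta$, $\theta$, and the enlarged dilation $\beta$: one must ensure that the weak-type bound on $\mc{M}_{T,P}^{\#}$, which is formally only global on $S$, can be genuinely recast in terms of averages over $\alpha P$, while still leaving enough ``good'' mass in each $P'\in\mc{F}_P$ to house the pivot $s_0$. Once this calibration is in place, the telescoping argument closes routinely.
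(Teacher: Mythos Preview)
Your proposal is correct and follows essentially the same approach as the paper's proof. The stopping-time construction of $\mc{S}$ via the bad sets $\Omega_P$, the selection of children as maximal dyadic subcubes where $\Omega_P$ has density exceeding a threshold (the paper phrases this as a local Calder\'on--Zygmund decomposition of $\{s\in P:M^{\ms{D}(P)}(\ind_{\Omega_P})(s)>1/c_2\}$, which amounts to the same thing), the pivot-point argument giving the transfer estimate $\nrm{T_{P\setminus P'}f(s)}_Y\lesssim C_T(A_P+A_{P'})$, and the final telescoping via the localized $\ell^r$-estimate are all identical in spirit to what the paper does. Regarding your anticipated obstacle on localizing $\mc{M}_{T,P}^{\#}f$: the paper dispenses with the enlarged dilation $\beta P$ by invoking the localization property of the family directly to assert $\mc{M}_{T,P}^{\#}(f)=\mc{M}_{T,P}^{\#}(f\ind_{\alpha P})$ on $P$, so that the weak-type estimate immediately produces averages over $\alpha P$.
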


The assumption in the third bullet in Theorem \ref{theorem:localsparse} replaces the $r$-sub\-linearity assumption in Theorem \ref{theorem:main}. We will call this assumption a \emph{localized $\ell^r$-estimate}.

\begin{proof}
Fix $f \in L^p(S,X)$ and $Q \in \ms{D}$. We will prove the theorem in two steps: we will first construct the $\frac12$-sparse family of cubes $\mc{S}$ and then show that the sparse expression associated to $\mc{S}$ dominates $T_Qf$ pointwise.

\textbf{Step 1:}
We will construct the $\frac12$-sparse family of cubes $\mc{S}$ iteratively. Given a collection of pairwise disjoint cubes $\mc{S}^k$ for some $k \in \N$ we will first describe how to construct $\mc{S}^{k+1}$. Afterwards we can inductively define $\mc{S}^k$ for all $k \in \N$ starting from $\mc{S}^1 = \cbrace{Q}$ and set $\mc{S}:=\bigcup_{k \in \N} \mc{S}^k$.

Fix a $P\in \mc{S}^k$ and for $\lambda\geq1$ to be chosen later define
\begin{align*}
  \Omega_{P}^1&:= \cbraces{s \in P: \nrm{T_{P} f(s)}_Y> \lambda\, C_T \,\ipb{\nrm{f}_X}_{p_0,\alpha P}}\\
  \Omega_{P}^2&:= \cbraces{s \in P: \mc{M}_{T,P}^{\#}(f)(s)> \lambda \,C_T\,\ipb{\nrm{f}_X}_{p_0,\alpha P}}
\end{align*}
and $\Omega_P:= \Omega_{P}^1 \cup  \Omega_{P}^2$. Let $c_1\geq 1$, depending on $S$, $\ms{D}$ and $\alpha$, be such that $\mu(\alpha P) \leq c_1\,\mu(P)$. By the domination property of the $\alpha$-localization family we have
\begin{align*}
\nrm{T_P f(s)}_Y &\leq \nrm{T(f\ind_{\alpha P})(s)}_Y, & &s \in P,
\intertext{and by the localization property}
  \mc{M}_{T,P}^{\#}(f)(s)&=\mc{M}_{T,P}^{\#}(f\ind_{\alpha P})(s),  & &s \in P.
\end{align*}
  Thus by the weak boundedness assumptions on $T$ and $\mc{M}^{\#}_{T,P}$ and  H\"older's inequality we have for $i=1,2$
\begin{align}\label{eq:holdercomp}
    \mu(\Omega_{P}^i) &\leq \has{\frac{\nrm{f \ind_{\alpha P}}_{L^{p_i}(S;X)}}{\lambda \,\ipb{\nrm{f}_X}_{p_0,\alpha P}}}^{p_i}
    = \frac{\ipb{\nrm{f}_X}_{p_i,\alpha P}^{p_i}}{\lambda^{p_i}\ipb{\nrm{f}_X}_{p_0,\alpha P}^{p_i}} \mu(\alpha P)
     \leq  \frac{ c_1}{\lambda} \, \mu(P).
\end{align}
Therefore it follows that
\begin{equation}\label{eq:OmegatoQ}
  \mu(\Omega_P) \leq \frac{2c_1}{\lambda}  \mu(P).
\end{equation}
To construct the cubes in $\mc{S}^{k+1}$ we will use a local Calder\'on--Zygmund decomposition (see e.g. \cite[Lemma 4.5]{FN18}) on
\begin{equation*}
  \Omega_{P,\rho}:= \cbrace{s \in P: M^{\ms{D}(P)} (\ind_{\Omega_P})>\tfrac1\rho}, \qquad \rho>0
\end{equation*}
 which will be a proper subset of $P$ for our choice of $\lambda$ and $\rho$. Here $M^{\ms{D}(P)}$ is the dyadic Hardy--Littlewood maximal operator with respect to the restricted dyadic system $\ms{D}(P)$.
 The local Calder\'on--Zygmund decomposition yields a pairwise disjoint collection of cubes $\mc{S}_P \subseteq \ms{D}(P)$ and a constant $c_2 \geq 2$, depending on $S$ and $\ms{D}$, such that $\Omega_{P,c_2}= \textstyle{\bigcup_{P'\in \mc{S}_P}}P'$ and
\begin{equation}\label{eq:Pjsize}
 \tfrac{1}{c_2} \, \mu(P') \leq \mu(P'\cap \Omega_{P}) \leq \tfrac{1}{2} \,\mu(P'),\qquad P'\in \mc{S}_P.
\end{equation}
Then by \eqref{eq:OmegatoQ}, \eqref{eq:Pjsize} and the disjointness of the cubes in $\mc{S}_P$ we have
\begin{align*}
     \sum_{P'\in \mc{S}_P} \mu(P') \leq c_2 \, \sum_{P'\in \mc{S}_P}  \mu(P'\cap \Omega_P) \leq c_2 \,\mu(\Omega_P) \leq \frac{2c_1c_2}{\lambda}  \mu(P).
\end{align*}
 Therefore, by choosing $\lambda=4c_1c_2$, we have $\sum_{P'\in \mc{S}_P} \mu(P')  \leq \frac12 \mu(P)$.  This choice of $\lambda$ also ensures that $\Omega_{P,c_2}$ is a proper subset of $P$ by as claimed before. We define $S^{k+1} := \bigcup_{P \in \mc{S}^k} \mc{S}_P$.

Now take $\mc{S}^1 = \cbrace{Q}$, iteratively define $\mc{S}^k$ for all $k \in \N$  as described above and set $\mc{S} :=\bigcup_{k \in \N} \mc{S}^k$. Then $\mc{S}$ is $\frac12$-sparse family of cubes, since for any $P\in \mc{S}$ we can set $$E_P:= P\setminus \bigcup_{P'\in \mc{S}_P} P',$$ which are pairwise disjoint by the fact that $\bigcup_{P'\in \mc{S}^{k+1}} P' \subseteq \bigcup_{P\in \mc{S}^k} P$ for all $k \in \N$ and we have
\begin{equation*}
  \mu(E_P) = \mu(P) - \sum_{P'\in \mc{S}_P} \mu(P') \geq \frac12 \mu(P).
\end{equation*}

\textbf{Step 2:}
 We will now check that the sparse expression corresponding to $\mc{S}$ constructed in Step 1 dominates $T_Qf$ pointwise. Since
\begin{equation*}
\lim_{k \to \infty}  \mu\hab{\bigcup_{P\in \mc{S}^k}P} \leq \lim_{k \to \infty} \frac{1}{2^k}\, \mu(Q) =0,
\end{equation*}
 we know that there is a set $N_0$ of measure zero  such that for all $s \in Q\setminus N_0$ there are only finitely many  $k \in \N$ with $s \in  \bigcup_{P\in \mc{S}^k}P$. Moreover by the Lebesgue differentiation theorem we have for any $P \in \mc{S}$ that
  $\ind_{\Omega_P} (s) \leq M^{\ms{D}(P)}(\ind_{\Omega_P})(s)$
for a.e. $s \in P$. Thus
\begin{equation}\label{eq:Npprop}
  \Omega_P\setminus N_P \subseteq  \Omega_{P,1}\subseteq \Omega_{P,c_2} = \bigcup_{P'\in \mc{S}_P} P'
\end{equation} for some set $N_P$ of measure zero. We define
 $N:= N_0 \cup\bigcup_{P \in \mc{S}}N_P,$  which is a set of measure zero.

  Fix $s \in Q\setminus N$ and take the largest $n \in \N$ such that $s \in \bigcup_{P\in \mc{S}^n}P$, which exists since $s \notin N_0$. For $k =1,\ldots,n$ let $P_k \in \mc{S}^k$ be the unique cube such that $s \in P_k$ and note that by construction we have
  $P_{n} \subseteq  \ldots \subseteq P_1=Q.$
Using the localized $\ell^r$-estimate of $T$ we split $\nrm{T_Qf(s)}_Y^r $ into two parts
\begin{align*}
  \nrmb{T_Qf(s)}_Y^r  &\leq C_r^r \has{\nrmb{T_{P_{n}}f(s)}_Y^r + \sum_{k=1}^{n-1}\nrmb{T_{P_k\setminus P_{k+1}}f(s)}_Y^r}\\
  &=:C_{r}^r \has{ \hspace{2pt} \text{\framebox[15pt]{A}}+  \text{\framebox[15pt]{B}} \hspace{2pt} }.
\end{align*}

For \framebox[15pt]{A} note that $s \notin N_{P_n}$ and $s \notin \bigcup_{P'\in \mc{S}^{n+1}}P' $ and therefore by \eqref{eq:Npprop} we know that $s \in P_n \setminus \Omega_{P_n}$. So by the definition of $\Omega_{P_n}^1$
\begin{equation*}
  \text{\framebox[15pt]{A}} \leq \lambda^r \, C_T^r\, \ipb{\nrm{f}_X}_{p_0,\alpha P_n}^r.
\end{equation*}
For $1\leq k \leq n-1$ we have by \eqref{eq:OmegatoQ} and \eqref{eq:Pjsize} that
\begin{align}\label{eq:Pkmorethan14}
\begin{aligned}
  \mu\hab{P_{k+1} \setminus (\Omega_{P_{k+1}} \cup \Omega_{P_{k}}) }&\geq \mu(P_{k+1}) - \mu(\Omega_{P_{k+1}}) - \mu(P_{k+1} \cap \Omega_{P_k})\\
  &\geq \mu(P_{k+1}) - \frac{1}{2c_2}\mu({P_{k+1}}) - \frac{1}{2}\mu(P_{k+1})>0,
  \end{aligned}
\end{align}
 so $P_{k+1} \setminus (\Omega_{P_{k+1}} \cup \Omega_{P_k})$ is non-empty. Take $s' \in P_{k+1} \setminus (\Omega_{P_{k+1}} \cup \Omega_{P_k})$, then we have

\begin{equation*}
\begin{aligned}
  \nrmb{T_{P_{k}\setminus P_{k+1}}f(s)}_Y &\leq \nrmb{T_{P_{k}\setminus P_{k+1}}f(s)-T_{P_{k}\setminus P_{k+1}}f(s')}_Y + \nrmb{T_{P_{k}\setminus P_{k+1}} f(s')}_Y\\
&\leq \mc{M}_{T,P_k}^{\#}f(s')  + \nrmb{T_{P_k}(s')}_Y+\nrmb{T_{P_{k+1}}(s')}_Y\\
&\leq 2\lambda \,C_T\,\hab{\ipb{\nrm{f}_X}_{p_0,\alpha P_{k}} + \ipb{\nrm{f}_X}_{p_0,\alpha P_{k+1}}},
\end{aligned}
\end{equation*}
where we used the definition of $\mc{M}_{T,P_k}^{\#}$ and $T_{P_{k+1}\setminus P_k}$ in the second inequality and $s' \notin \Omega_{P_{k+1}} \cup \Omega_{P_k}$ in the third inequality. Using $(a+b)^r \leq 2^{r-1}(a^r+b^r)$ for any $a,b>0$ this implies that
\begin{align*}
   \text{\framebox[15pt]{B}}  &\leq \sum_{k=1}^{n-1} 2^r 2^{r-1} \lambda^r \,C_T^r\, \has{ \ipb{\nrm{f}_X}^r_{p_0,\alpha P_k} + \ipb{\nrm{f}_X}^r_{p_0,\alpha P_{k+1}}}\\
   &\leq \sum_{k=1}^{n} 4^r \lambda^r \,C_T^r\,  \ipb{\nrm{f}_X}^r_{p_0,\alpha P_k}.
\end{align*}
Combining the estimates for  \framebox[15pt]{A} and $\text{\framebox[15pt]{B}}$ we obtain
\begin{align*}
  \nrmb{T_Qf(s)}_Y
  &\leq 5\,\lambda\, C_T\, C_{r}\, \has{\sum_{k=1}^n\ipb{\nrm{f}_X}_{p_0,\alpha P_k}^r }^{1/r}\\
  &= 5\,\lambda\, C_T\, C_{r}\, \has{\sum_{P \in \mc{S}}\ipb{\nrm{f}_X}_{p_0,\alpha P}^r \ind_{P}(s)}^{1/r}.
\end{align*}
Since $s \in Q\setminus N$ was arbitrary and $N$ has measure zero, this inequality holds for a.e. $s \in Q$.
Noting that $\lambda = 4c_1c_2$ and $c_1$ and $c_2$ only depend on $S$, $\alpha$ and $\ms{D}$  finishes the proof of the theorem.
\end{proof}

As announced Theorem \ref{theorem:main} now follows directly from Theorem \ref{theorem:localsparse} and a covering argument with Lemma \ref{lemma:covering}.

\begin{proof}[Proof of Theorem \ref{theorem:main}.] We will prove Theorem \ref{theorem:main} in three steps: we will first show that the assumptions of Theorem \ref{theorem:main} imply the assumptions of Theorem \ref{theorem:localsparse}, then we will improve the local conclusion of Theorem \ref{theorem:localsparse} to a global one and finally we will replace the averages over the dilation $\alpha P$ in the conclusion of Theorem \ref{theorem:localsparse} by the average over larger cubes $P'$.

To start let $\ms{D}^1,\ldots,\ms{D}^m$ be as in Proposition \ref{proposition:dyadicsystem} with parameters $c_0$, $C_0$, $\delta$ and $\gamma$, which only depend on $S$.

\textbf{Step 1:}  For any $Q \in \ms{D}^1$ define
$T_Q$
 by $T_Qf(s) := T(f\ind_{\alpha Q})(s)$ for $s \in Q$. Then:
 \begin{itemize}
   \item $\cbrace{T_Q}_{Q \in \ms{D}^1}$ is an $\alpha$-localization family of $T$.
   \item For any $Q \in \ms{D}^1$ and $f \in L^{p_1}(S;X)$ we have
\begin{align*}
  \mc{M}^{\#}_{T,Q}f(s)  &\leq\mc{M}^{\#}_{T,\alpha}(f\ind_{\alpha Q})(s), \qquad s \in Q.
\end{align*}
 So by the weak $L^{p_2}$-boundedness of $\mc{M}^{\#}_{T,\alpha}$ it follows that $\mc{M}^{\#}_{T,Q}f$ is weak $L^{p_2}$-bounded uniformly in $Q \in \ms{D}^1$.
   \item For any  $f \in L^p(S;X)$ and $Q_1,\ldots,Q_n \in \ms{D}^1$ with $Q_n \subseteq \ldots \subseteq Q_1$ the functions $f_k:= f\ind_{\alpha Q_k \setminus \alpha Q_{k+1}}$ for $k=1,\ldots,n-1$ and $f_n:= f\ind_{\alpha Q_n}$ are disjointly supported. Thus by the $r$-sublinearity of $T$
       \begin{equation*}
\nrmb{T_{Q_1}f(s)}_Y\leq C_r \has{\nrmb{T_{Q_n}{f}(s)}_Y^r+\sum_{k=1}^{n-1}\nrmb{T_{Q_{k}\setminus Q_{k+1}}f(s)}_Y^r}^{1/r}, \qquad s \in Q_n.
\end{equation*}
 \end{itemize}
So the assumptions of Theorem \ref{theorem:localsparse} follow from the assumptions of Theorem \ref{theorem:main}.

\textbf{Step 2:} Let $f \in L^p(S;X)$ be boundedly supported. First suppose that $\diam(S) = \infty$ and let $E$ be a ball containing the support of $f$. By Lemma \ref{lemma:covering} there is a partition $\mc{D} \subseteq \ms{D}^1$ such that $E \subseteq \alpha Q$ for all $Q \in \mc{D}$. Thus by Theorem \ref{theorem:localsparse} we can
find a $\frac{1}{2}$-sparse collection of cubes $\mc{S}_Q \subseteq \ms{D}^1(Q)$ for every $Q \in \mc{D}$ with
\begin{align}
 \notag \nrmb{Tf(s)}_Y &\lesssim_{S,\alpha}  C_T\, C_r\, \has{\sum_{P \in \mc{S}_Q} \ipb{\nrm{f}_X}_{p_0,\alpha P}^r\ind_P(s)}^{1/r}, \qquad s \in Q,
\intertext{where we used that $T_Qf = T(f \ind_{\alpha Q}) =Tf$ as $\supp f \subseteq \alpha Q$.
Since $\mc{D}$ is a partition, $\mc{S} := \bigcup_{Q \in \mc{D}}S_Q$ is also a $\frac{1}{2}$-sparse collection of cubes with}
 \label{eq:sparsedomdilation}  \nrmb{Tf(s)}_Y &\lesssim_{S,\alpha}  C_T\, C_r\, \has{\sum_{P \in \mc{S}} \ipb{\nrm{f}_X}_{p_0,\alpha P}^r\ind_P(s)}^{1/r}, \qquad s \in S,
\end{align}
If $\diam (S) < \infty$, then \eqref{eq:sparsedomdilation}  follows directly from Theorem \ref{theorem:localsparse} since $S \in \ms{D}$ in that case.

\textbf{Step 3:} For any $P \in \mc{S}$ with center $z$ and sidelength $\delta^k$ we can find a $P' \in \ms{D}^j$ for some $1\leq j\leq m$ such that
\begin{equation*}
\alpha P = B(z,\alpha  C_0 \cdot \delta^k) \subseteq P', \qquad \diam(P') \leq \gamma \alpha  C_0 \cdot\delta^k.
\end{equation*}
Therefore there is a $c_1>0$ depending on $S$ and  $\alpha$ such that
$$\mu(P') \leq \mu \hab{B(z,\gamma \alpha  C_0 \cdot \delta^k)} \leq c_1\, \mu \hab{ B(z,c_0\cdot \delta^k) } \leq c_1\, \mu(P).$$
So by defining $E_{P'}:= E_P$ we can conclude that the collection of cubes $\mc{S}' := \cbrace{P':P \in \mc{S}}$ is $\frac{1}{2c_1}$-sparse. Moreover since $\alpha P \subseteq P'$ and $\mu(P') \leq c_1 \, \mu(P) \leq c_1\,\mu(\alpha P)$ for any $P \in \mc{S}$, we have
\begin{equation*}
  \ipb{\nrm{f}_X}_{p_0,\alpha P} \leq c_1 \ipb{\nrm{f}_X}_{p_0,P'}.
\end{equation*}
Combined with \eqref{eq:sparsedomdilation} this proves the sparse domination in the conclusion of  Theorem \ref{theorem:main}.
\end{proof}

\begin{remark}~\label{remark:mainpointwise}
The assumption $\alpha \geq {3c_d^2/\delta}$ in Theorem \ref{theorem:main} arises from the use of  Lemma \ref{lemma:covering}, which transfers  the local sparse domination estimate of Theorem \ref{theorem:localsparse} to the global statement of Theorem \ref{theorem:main}. To deduce weighted estimates the local sparse domination estimate of Theorem \ref{theorem:localsparse} suffices by testing against boundedly supported functions. However the operator norm of $\mc{M}_{T,\alpha}^{\#}$ usually  becomes easier to estimate for larger $\alpha$, so the lower bound on $\alpha$ is not restrictive.
\end{remark}

\subsection*{Further generalizations}
Our main theorems,  Theorem \ref{theorem:main} and Theorem \ref{theorem:localsparse}, allow for various further generalizations. One can for instance change the boundedness assumptions on $T$ and $\mc{M}^{\#}_{T,\alpha}$, treat multilinear operators, or deduce domination by sparse forms for operators that do not admit a pointwise sparse estimate. We end this section by sketching some of these possible generalizations.

In \cite[Section 3]{LO19} various variations and extensions of the main result in \cite{LO19} are outlined. In particular they show:
\begin{itemize}
  \item The sparse domination for an individual function follows from assumptions on the same function. This can be exploited to prove a sparse $T(1)$-type theorem, see \cite[Section 4]{LO19}.
  \item One can use certain Orlicz estimates to deduce sparse domination with Orlicz averages.
  \item The method of proof extends to the multilinear setting (see also \cite{Li18}).
\end{itemize}
 Our results can also be extended in these directions, which we leave to the interested reader. In the remainder of this section, we will explore some further directions in which our results can be extended.

 \bigskip

Sparse domination techniques have been successfully applied to \emph{fractional integral operators}, see e.g. \cite{CB13, CB13b,Cr17,IRV18}. In these works sparse domination and sharp weighted estimates are deduced for e.g.
 the Riesz potentials, which for $0<\alpha<d$ and a Schwartz function $f\colon \R^d \to \C$ are given by
\begin{equation*}
  I_\alpha f(s):= \int_{\R^d}\frac{f(t)}{\abs{s-t}^{d-\alpha}}\dd t, \qquad s \in \R^d,
\end{equation*}
A key feature of such operators is that they are not (weakly) $L^p$-bounded, but bounded from $L^p(\R^d)$ to $L^q(\R^d)$, where $p,q \in (1,\infty)$ are such that $\frac1p=\frac1q+\frac\alpha{d}$. The sparse domination that one obtains in this case involves fractional sparse operators, in which the usual averages $\ip{\abs{f}}_{p,Q}$ are replaced by fractional averages.

These operators  fit in our framework with minimal effort. Indeed, upon inspection of the proof of Theorem \ref{theorem:localsparse} it becomes clear that the only place where we use the boundedness of $T$ and $\mc{M}^{\#}_{T,\alpha}$ is in \eqref{eq:holdercomp}. Replacing the bounds with the off-diagonal bounds arising from fractional integral operators, we obtain the following variant of Theorem \ref{theorem:main}.

\begin{theorem}\label{theorem:fractional}
Let $(S,d,\mu)$ be a space of homogeneous type and let $X$ and $Y$ be Banach spaces. Take $p_0,q_0,r \in [1,\infty)$. Take $\alpha \geq 3c_d^2/\delta$, where $c_d$ is the quasi-metric constant and $\delta$ is as in Proposition \ref{proposition:dyadicsystem}. Assume the following conditions:
\begin{itemize}
  \item $T$ is a bounded linear operator from $L^{p_0}(S;X)$ to $L^{q_0,\infty}(S;Y)$.
  \item $\mc{M}_{T,\alpha}^{\#}$ is a bounded operator from $L^{p_0}(S;X)$ to $L^{q_0,\infty}(S)$.
  \item $T$ is $r$-sublinear.
\end{itemize}
 Then there is an $\eta \in (0,1)$ such that for any boundedly supported $f \in L^{p_0}(S;X)$ there is an $\eta$-sparse collection of cubes $\mc{S}$ such that
\begin{align*}
   \nrm{ Tf(s)}_Y&\lesssim_{S,\alpha} C_T \,C_r\, \has{\sum_{Q \in \mc{S}} \mu(\alpha P)^{\frac{r}{p_0}-\frac{r}{q_0}}\ipb{\nrm{f}_X}_{p_0,Q}^r \ind_{Q}(s)}^{1/r}, \qquad s \in S,
\end{align*}
where $C_T={\nrm{T}_{L^{p_0}\to L^{p_0,\infty}} + \nrm{\mc{M}_{T,\alpha}^{\#}}_{L^{p_0}\to L^{p_0,\infty}}}$ and $C_r$ is the $r$-sublinearity constant.
\end{theorem}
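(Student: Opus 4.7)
The plan is to follow the proofs of Theorems \ref{theorem:main} and \ref{theorem:localsparse} essentially line by line, with only the stopping threshold in the Calder\'on--Zygmund decomposition genuinely modified to absorb the fractional scaling. As in the proof of Theorem \ref{theorem:main}, I would first prove a local version on a fixed adjacent dyadic system $\ms{D}^j$ from Proposition \ref{proposition:dyadicsystem}, using the canonical $\alpha$-localization family $T_Q f := T(f \ind_{\alpha Q})$ for $s \in Q$, and then globalize by Lemma \ref{lemma:covering}. The pointwise bound $\mc{M}^{\#}_{T,Q} f(s) \leq \mc{M}^{\#}_{T,\alpha}(f \ind_{\alpha Q})(s)$ for $s \in Q$ and the $r$-sublinearity of $T$ transfer the hypotheses of Theorem \ref{theorem:fractional} to those of a fractional version of Theorem \ref{theorem:localsparse}, the only difference being that the two weak-type bounds are now off-diagonal.

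The key observation, already highlighted in the text preceding the theorem, is that the weak-type hypotheses on $T$ and $\mc{M}^{\#}_{T,\alpha}$ enter the proof of Theorem \ref{theorem:localsparse} only through the Chebyshev estimate \eqref{eq:holdercomp}. The correct fractional threshold, dictated by scaling, is
\begin{equation*}
  \tau_P := \mu(\alpha P)^{\frac{1}{p_0}-\frac{1}{q_0}}\, \ipb{\nrm{f}_X}_{p_0,\alpha P} = \mu(\alpha P)^{-\frac{1}{q_0}}\,\nrm{f\ind_{\alpha P}}_{L^{p_0}(S;X)},
\end{equation*}
so I would replace the defining inequalities of $\Omega_P^1$ and $\Omega_P^2$ by $\nrm{T_P f(s)}_Y > \lambda\, C_T\, \tau_P$ and $\mc{M}^{\#}_{T,P}(f)(s) > \lambda\, C_T\, \tau_P$ respectively. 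The off-diagonal weak $L^{p_0}\to L^{q_0,\infty}$ bounds then yield
\begin{equation*}
  \mu(\Omega_P^i) \leq \has{\frac{\nrm{f\ind_{\alpha P}}_{L^{p_0}(S;X)}}{\lambda\, \tau_P}}^{q_0} = \frac{\mu(\alpha P)}{\lambda^{q_0}} \leq \frac{c_1}{\lambda^{q_0}}\,\mu(P),
\end{equation*}
which is exactly the analogue of \eqref{eq:holdercomp} with the harmless cosmetic change $\lambda \rightsquigarrow \lambda^{q_0}$. Choosing $\lambda$ large enough (e.g.\ $\lambda = (4c_1c_2)^{1/q_0}$) so that $\sum_{P' \in \mc{S}_P}\mu(P') \leq \tfrac12 \mu(P)$ then produces the desired $\tfrac12$-sparse collection $\mc{S}$ by the same iterative stopping construction.

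The remainder of Step 2 in the proof of Theorem \ref{theorem:localsparse} transcribes verbatim after systematically replacing $\ipb{\nrm{f}_X}_{p_0,\alpha P_k}$ by $\tau_{P_k}$: the non-emptiness of $P_{k+1}\setminus(\Omega_{P_{k+1}} \cup \Omega_{P_k})$ in \eqref{eq:Pkmorethan14} is untouched, since it only uses $\mu(\Omega_P) \leq \tfrac{1}{2c_2}\mu(P)$, and the bound on $\nrm{T_{P_k\setminus P_{k+1}} f(s)}_Y$ now becomes $2\lambda\, C_T\,(\tau_{P_k} + \tau_{P_{k+1}})$. Summing $r$-th powers one obtains the local sparse bound
\begin{equation*}
  \nrmb{T_Q f(s)}_Y \lesssim C_T\, C_r\, \has{\sum_{P \in \mc{S}} \tau_P^r\, \ind_P(s)}^{1/r}.
\end{equation*}
The three-step globalization in the proof of Theorem \ref{theorem:main}, using Lemma \ref{lemma:covering} and the passage from $\alpha P$ to an adjacent cube $P'$ via Proposition \ref{proposition:dyadicsystem}, is completely threshold-independent. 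Since $\mu(P') \asymp_{S,\alpha} \mu(\alpha P)$, the fractional prefactor $\mu(\alpha P)^{r/p_0 - r/q_0}$ is preserved in the conclusion up to a constant depending on $S$ and $\alpha$.

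There is no real obstacle to this proof; it is an exercise in tracking the correct fractional scaling through an existing argument. The only point worth double-checking carefully is the Chebyshev computation above: it is the identification of $\|f\ind_{\alpha P}\|_{L^{p_0}}$ with $\mu(\alpha P)^{1/p_0}\ip{\nrm{f}_X}_{p_0,\alpha P}$ that forces the correct exponent $\frac{1}{p_0}-\frac{1}{q_0}$ in $\tau_P$ and hence the prefactor in the final estimate.
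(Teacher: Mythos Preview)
Your proposal is correct and follows essentially the same approach as the paper: both use the canonical $\alpha$-localization family, redefine $\Omega_P^1$ and $\Omega_P^2$ with the fractional threshold $\mu(\alpha P)^{1/p_0-1/q_0}\ipb{\nrm{f}_X}_{p_0,\alpha P}$, redo the Chebyshev computation \eqref{eq:holdercomp}, and then carry the extra factor through Step 2 and the globalization verbatim. Your bookkeeping with $\lambda^{q_0}$ versus the paper's $\lambda$ is a harmless cosmetic difference (since $q_0\geq 1$ and $\lambda\geq 1$).
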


\begin{proof}
The proof is the same as the proof of Theorem \ref{theorem:main}, using an adapted version of Theorem \ref{theorem:localsparse} with the canonical $\alpha$-localization family
$$T_Qf(s) = T(\ind_{\alpha Q}f)(s), \qquad s \in Q.$$
  The only thing that changes in the proof of Theorem \ref{theorem:localsparse} is the definition of $\Omega_P^1$ and $\Omega_P^2$ and the computation in \eqref{eq:OmegatoQ}. Indeed, we define
\begin{align*}
  \Omega_{P}^1&:= \cbraces{s \in P: \nrm{T_{P} f(s)}_Y> \lambda\, C_T \,\mu(\alpha P)^{\frac1{p_0}-\frac1{q_0}}\ipb{\nrm{f}_X}_{p_0,\alpha P}}\\
  \Omega_{P}^2&:= \cbraces{s \in P: \mc{M}_{T,P}^{\#}(f)(s)> \lambda \,C_T\,\mu(\alpha P)^{\frac1{p_0}-\frac1{q_0}}\ipb{\nrm{f}_X}_{p,\alpha P}}
\end{align*}
and then by the assumptions on $T$ and $\mc{M}^{\#}_{T,P}$  we have for $i=1,2$
\begin{align*}
    \mu(\Omega_{P}^i) &\leq \has{\frac{\nrm{f \ind_{\alpha P}}_{L^{p_0}(S;X)}}{\lambda \,\mu(\alpha P)^{\frac1{p_0}-\frac1{q_0}}\ipb{\nrm{f}_X}_{p_0,\alpha P}}}^{q_0}
    = \frac{\ipb{\nrm{f}_X}_{p_0,\alpha P}^{q_0}}{\lambda^{q_0}\ipb{\nrm{f}_X}_{p_0,\alpha P}^{q_0}} \mu(\alpha P)
     \leq  \frac{ c_1}{\lambda} \, \mu(P).
\end{align*}
which proves \eqref{eq:OmegatoQ}. In Step 2 of the proof of Theorem \ref{theorem:localsparse} one needs to keep track of the factor $\mu(\alpha P)^{\frac1{p_0}-\frac1{q_0}}$ in the estimates.
\end{proof}

In the celebrated paper \cite{BFP16} by Bernic\'ot, Frey and Petermichl, domination by \emph{sparse forms} was introduced to treat operators falling outside the scope of Calder\'on--Zygmund theory. This method was later adopted by Lerner in \cite{Le19} into his framework to prove sparse domination for rough homogeneous singular integral operators. As our methods are based on Lerner's sparse domination framework, our main result can also be generalized to the sparse form domination  setting.

Let $(S,d,\mu)$ be a space of homogeneous type with a dyadic system $\ms{D}$, let $X$ and $Y$ be Banach spaces, $q \in (1,\infty)$, $p \in [1,q)$ and $\alpha \geq 1$. For a bounded operator
$$T\colon L^{p}(S;X) \to L^{p,\infty}(S;Y)$$ with an $\alpha$-localization family $\cbrace{T_Q}_{Q \in \ms{D}}$ we define the \emph{localized sharp grand $q$-maximal truncation operator} for $Q \in \ms{D}$ by
 \begin{align*}
  &\mc{M}_{T,Q,q}^{\#}f(s):= \\&\hspace{1cm}\sup_{\substack{Q'\in \ms{D}(Q):\\s \in Q'}}\, \has{\avint_{Q'}\avint_{Q'} \,\nrmb{(T_{Q\setminus Q'}) f(s')-(T_{Q\setminus Q'}) f(s'')}_Y^q\dd \mu(s')\dd \mu(s'')}^{1/q}.
\end{align*}
Note that for $q=\infty$ one formally recovers the operator $\mc{M}_{T,Q}^{\#}$.

We will prove a version of Theorem \ref{theorem:localsparse} for operators for which the truncation operators $\mc{M}_{T,Q,q}^{\#}$ are bounded uniformly in $Q \in \ms{D}$ using sparse forms. Of course taking
 \begin{align*}
T_Qf(s) &:=T(f \ind_{\alpha Q})(s), \qquad s \in Q.
\end{align*}
for $Q \in \ms{D}$ as the $\alpha$-localization family one can easily deduce a statement like Theorem \ref{theorem:main} in this setting, which we leave to the interested reader.

\begin{theorem}\label{theorem:sparseform}
Let $(S,d,\mu)$ be a space of homogeneous type with dyadic system $\ms{D}$ and let  $X$ and $Y$ be Banach spaces. Take $q_0 \in (1,\infty]$, $r \in (0,q_0)$, $p_1,p_2 \in [1,q_0)$, set $p_0:=\max\cbrace{p_1, p_2}$ and take $\alpha \geq 1$. Suppose that
\begin{itemize}
  \item $T$ is a bounded operator from $L^{p_1}(S;X)$ to $L^{p_1,\infty}(S;Y)$ with an $\alpha$-localization family $\cbrace{T_Q}_{Q \in \ms{D}}$.
  \item $\mc{M}_{T,Q,q_0}^{\#}$ is bounded from $L^{p_2}(S;X)$ to $L^{p_2,\infty}(S)$ uniformly in $Q \in \ms{D}$.
  \item $T$ satisfies a localized $\ell^r$-estimate.
\end{itemize}
 Then for any $f \in L^{p_0}(S;X)$, $g \in L^{\ha{\frac1r-\frac1{q_0}}^{-1}}(S)$ and $Q\in \ms{D}$  there exists a $\frac12$-sparse collection of dyadic cubes $\mc{S}\subseteq \ms{D}(Q)$ such that
 \begin{equation*}
   \has{\int_Q \nrmb{T_Qf}_Y^r\cdot \abs{g}^r\dd\mu }^{1/r} \lesssim_{S,\ms{D},\alpha,r} C_T \, C_r \has{\sum_{P \in \mc{S}} \mu(P) \ipb{\nrm{f}_X}_{p_0,\alpha P}^r \ipb{\abs{g}}_{\frac{1}{\frac1r-\frac1{q_0}}, P}^r}^{1/r}
 \end{equation*}
with $C_T:={\nrm{T}_{L^{p_1}\to L^{p_1,\infty}} + \sup_{P \in \ms{D} } \nrm{\mc{M}_{T,P,q_0}^{\#}}_{L^{p_2}\to L^{p_2,\infty}}}$ and $C_r$ the constant from the localized $\ell^r$-estimate.
\end{theorem}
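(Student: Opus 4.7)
The plan is to adapt the two-step proof of Theorem \ref{theorem:localsparse}, with the pointwise good-point control of $\mc{M}^{\#}_{T,P}f$ replaced throughout by an $L^{q_0}$-averaged control stemming from the boundedness of $\mc{M}^{\#}_{T,P,q_0}$. For Step 1 (sparse construction) I would carry out the identical Calder\'on--Zygmund stopping with exceptional sets
$$
\Omega_P^1 := \cbraceb{s\in P: \nrmb{T_Pf(s)}_Y > \lambda C_T\ipb{\nrm{f}_X}_{p_0,\alpha P}}, \quad
\Omega_P^2 := \cbraceb{s\in P: \mc{M}^{\#}_{T,P,q_0}f(s) > \lambda C_T\ipb{\nrm{f}_X}_{p_0,\alpha P}}.
$$
The measure estimate \eqref{eq:holdercomp} transfers verbatim, via weak $L^{p_1}$-boundedness of $T$ and weak $L^{p_2}$-boundedness of $\mc{M}^{\#}_{T,P,q_0}$, producing a $\tfrac12$-sparse collection $\mc{S}\subseteq\ms{D}(Q)$ and, by \eqref{eq:Pkmorethan14}, good sets $G_P := P\setminus(\Omega_P\cup\Omega_{\pi(P)})$ of measure $\gtrsim\mu(P)$; here $\pi(P)$ denotes the smallest cube of $\mc{S}$ strictly containing $P$.

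For Step 2, raising the localized $\ell^r$-estimate to the $r$-th power along the sparse chain through each $s\in Q$ and reindexing by (child, parent)-pairs yields, with $E_P := P\setminus\bigcup_{\pi(P')=P}P'$,
$$
\nrmb{T_Qf(s)}_Y^r \leq C_r^r \sum_{P\in\mc{S}} \ind_{E_P}(s)\nrmb{T_Pf(s)}_Y^r + C_r^r \sum_{P\in\mc{S}\setminus\cbrace{Q}} \ind_P(s)\nrmb{T_{\pi(P)\setminus P}f(s)}_Y^r
$$
for a.e.\ $s\in Q$. Multiplying by $\abs{g(s)}^r$ and integrating, the principal $E_P$-terms are handled as in Theorem \ref{theorem:localsparse}: pointwise, $\nrm{T_Pf(s)}_Y\lesssim C_T\ipb{\nrm{f}_X}_{p_0,\alpha P}$ on $E_P$ outside a null set, and H\"older's inequality with exponent $q_1/r>1$, where $q_1:=(\tfrac1r-\tfrac1{q_0})^{-1}$, produces the factor $\mu(P)\ipb{\abs{g}}_{q_1,P}^r$.

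The new technical ingredient is the bound for the difference terms. H\"older's inequality in $s$ with exponents $q_0/r$ and $q_0/(q_0-r)$ separates $\abs{g}^r$ and leaves the $L^{q_0}$-average $(\avint_P\nrmb{T_{\pi(P)\setminus P}f}_Y^{q_0}\dd\mu)^{1/q_0}$. To estimate this I would apply the triangle inequality $\nrm{h(s')}_Y\leq\nrm{h(s')-h(s_*)}_Y+\nrm{h(s_*)}_Y$ with $h:=T_{\pi(P)\setminus P}f$, raise to the $q_0$-th power, average in $s'$ over $P$, then average in $s_*$ over $G_P$; since $\mu(G_P)\gtrsim\mu(P)$, the double-average piece is bounded (taking $Q'=P$ in the defining supremum) by $\mc{M}^{\#}_{T,\pi(P),q_0}f(s_*)\lesssim C_T\ipb{\nrm{f}_X}_{p_0,\alpha\pi(P)}$ for any $s_*\in G_P$ (using $s_*\notin\Omega_{\pi(P)}^2$), while the remaining piece is controlled via $T_{\pi(P)\setminus P}f = T_{\pi(P)}f - T_Pf$ and the stopping bounds at $s_*\notin\Omega_{\pi(P)}^1\cup\Omega_P^1$ by $C_T(\ipb{\nrm{f}_X}_{p_0,\alpha P}+\ipb{\nrm{f}_X}_{p_0,\alpha\pi(P)})$.

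Summing the contributions yields a sparse-form bound with averages of $\nrm{f}_X$ over both $\alpha P$ and $\alpha\pi(P)$. To absorb the latter I would group by parent $P'\in\mc{S}$: since $\cbrace{P:\pi(P)=P'}$ consists of pairwise disjoint subsets of $P'$, H\"older's inequality on the sum with exponents $q_1/r$ and $q_1/(q_1-r)$ yields $\sum_{\pi(P)=P'}\mu(P)\ipb{\abs{g}}_{q_1,P}^r\leq\mu(P')\ipb{\abs{g}}_{q_1,P'}^r$, collapsing everything into $\sum_{P\in\mc{S}}\mu(P)\ipb{\nrm{f}_X}^r_{p_0,\alpha P}\ipb{\abs{g}}^r_{q_1,P}$. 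The main obstacle is the $L^{q_0}$-average control above: unlike the pointwise triangle inequality at a single good point used in Theorem \ref{theorem:localsparse}, here one must enlarge the good point $s_*$ into the positive-measure set $G_P$ and invoke Jensen's inequality to reconcile the single-function $L^{q_0}$-average with the two-point oscillation defining $\mc{M}^{\#}_{T,P,q_0}$.
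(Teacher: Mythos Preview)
Your proposal is correct and follows essentially the same route as the paper: identical sparse stopping with $\mc{M}^{\#}_{T,P,q_0}$ in place of $\mc{M}^{\#}_{T,P}$, the same splitting into principal $E_P$-terms and difference terms via the localized $\ell^r$-estimate, the same good-set averaging trick to access the double-oscillation defining $\mc{M}^{\#}_{T,\pi(P),q_0}$, and the same H\"older regrouping by parent at the end. The one minor discrepancy is that you apply H\"older before the triangle inequality (working at the $q_0$-th power) rather than after (at the $r$-th power, as the paper does), which as written introduces a factor $2^{q_0}$ and hence a $q_0$-dependent implicit constant not matching the stated $\lesssim_{S,\ms{D},\alpha,r}$; reversing the order of these two steps removes this dependence.
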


\begin{proof}
We construct the sparse collection of cubes $\mc{S}$ exactly as in Step 1 of the proof of Theorem \ref{theorem:localsparse}, using $\mc{M}_{T,P,q_0}^{\#}$ instead of $\mc{M}_{T,P}^{\#}$ in the definition of $\Omega_P^2$. We will check that sparse form corresponding to $\mc{S}$ satisfies the claimed domination property, which will roughly follow the same lines as Step 2 of the proof of Theorem \ref{theorem:localsparse}.

Fix $f \in L^{p_0}(S;X)$ and $g \in L^{\ha{\frac1r-\frac1{q_0}}^{-1}}(S)$. Note that for a.e. $s \in Q$
there are only finitely many  $k \in \N$ with $s \in  \bigcup_{P\in \mc{S}^k}P$. So we can use the localized $\ell^r$-estimate of $T$ to split
\begin{equation}\label{eq:splitform}\begin{aligned}
  \int_Q \nrmb{T_Qf}_Y^r\cdot \abs{g}^r  &\leq C_r^r \sum_{k\in \N} \sum_{P \in \mc{S}^k} \has{\int_{P \setminus \bigcup_{P' \in \mc{S}^{k+1}}P'} \nrmb{T_Pf}_Y^r\cdot \abs{g}^r \\&\hspace{2cm}+\sum_{P' \in \mc{S}^{k+1}:P'\subseteq P} \int_{P'}\nrmb{T_{P\setminus P'}f}_Y^r\cdot \abs{g}^r}\\
  &=:C_{r}^r \sum_{k\in \N} \sum_{P \in \mc{S}^k} \has{ \hspace{2pt} \text{\framebox[20pt]{A$_{P}$}}+  \text{\framebox[20pt]{B$_{P}$}} \hspace{2pt} }.\end{aligned}
\end{equation}
Fix $k \in \N$ and $P \in \mc{S}^k$. As in the estimate for \framebox[15pt]{A} in Step 2 of the proof of Theorem \ref{theorem:localsparse}, we have
\begin{align*}
  \text{\framebox[20pt]{A$_{P}$}} &\leq \lambda^r \, C_T^r\, \ipb{\nrm{f}_X}_{p_0,\alpha P}^r \int_P\abs{g}^r \leq  \lambda^r \, C_T^r\, \mu(P) \ipb{\nrm{f}_X}_{p_0,\alpha P}^r \ip{\abs{g}}_{\frac{1}{\frac1r-\frac1{q_0}}}^r,
\end{align*}
using H\"older's inequality in the second inequality.
For $P' \in \mc{S}^{k+1}$ such that $P'\subseteq P$ we have as in \eqref{eq:Pkmorethan14} that
\begin{align*}
  \mu\hab{P' \setminus (\Omega_{P'} \cup \Omega_{P}) }  &\geq \frac14 \mu(P').
\end{align*}
Therefore we can estimate each of the terms in the sum in \framebox[20pt]{B$_{P}$} as follows
\begin{equation*}
\begin{aligned}
  \int_{P'}&\nrmb{T_{P\setminus P'}f}_Y^r\cdot \abs{g}^r\\ &\leq 2^{r}\int_{P'} \avint_{P' \setminus (\Omega_{P} \cup \Omega_{P'})} \nrmb{T_{P\setminus P'}f(s)-T_{P\setminus P'}f(s')}_Y^r\cdot \abs{g(s)}^r \dd \mu(s')\dd \mu(s)\\
  &\hspace{2.2cm}+ 2^{r}\int_{P'} \avint_{P' \setminus (\Omega_{P} \cup \Omega_{P'})} \nrmb{T_{P\setminus P'}f(s')}_Y^r \cdot  \abs{g(s)}^r \dd \mu(s')\dd \mu(s)\\
  &\leq 2^{r+2} \mu(P') \inf_{s'' \in P'} \mc{M}_{T,P,{q_0}}^{\#}f(s'')^r\cdot \ip{\abs{g}}_{\frac{1}{\frac1r-\frac1{q_0}}, P'}^r\\
  &\hspace{1.5cm}+ 2^{2r} \mu(P') \avint_{P' \setminus (\Omega_{P} \cup \Omega_{P'})} \nrmb{T_{P}f}_Y^r +\nrmb{T_{P'}f}_Y^r \dd \mu \cdot \ip{\abs{g}}_{r, P'}^r\\
  &\leq  4^{r+2}\lambda^r C_T^r\,\mu(P') \hab{\ipb{\nrm{f}_X}_{p_0,\alpha P}^r + \ipb{\nrm{f}_X}_{p_0,\alpha P'}^r} \ip{\abs{g}}_{\frac{1}{\frac1r-\frac1{q_0}}, P'}^r
\end{aligned}
\end{equation*}
where we used
H\"older's inequality and the definitions of $\mc{M}_{T,P,{q_0}}^{\#}$ and $T_{P\setminus P'}$ in the second inequality and the definitions of $\Omega_{P}$ and $\Omega_{P'}$ in the third inequality. Furthermore we note that by H\"olders inequality we have
\begin{align*}
  \sum_{\substack{P' \in \mc{S}^{k+1}:\\P'\subseteq P}} \mu(P') \, \ip{\abs{g}}_{\frac{1}{\frac1r-\frac1{q_0}}, P'}^r &\leq \has{\sum_{\substack{P' \in \mc{S}^{k+1}:\\P'\subseteq P}} \int_{P'}\abs{g}^{\frac{1}{\frac1r-\frac1{q_0}}}\dd \mu}^{1-\frac{r}{{q_0}}} \cdot \has{\sum_{\substack{P' \in \mc{S}^{k+1}:\\P'\subseteq P}} \mu(P')}^{r/{q_0}}\\
  &\leq \has{ \int_{P}\abs{g}^{\frac{1}{\frac1r-\frac1{q_0}}}\dd \mu}^{1-\frac{r}{{q_0}}} \cdot {\mu(P)}^{r/{q_0}}= \mu(P) \ip{\abs{g}}_{\frac{1}{\frac1r-\frac1{q_0}}, P}^r
\end{align*}
Thus for \framebox[20pt]{B$_{P}$} we obtain
\begin{align*}
  \text{\framebox[20pt]{B$_{P}$}}\leq 4^{r+2}\lambda^r C_T^r &\has{ \mu(P) \ipb{\nrm{f}_X}_{p_0,\alpha P}^r  \ip{\abs{g}}_{\frac{1}{\frac1r-\frac1{q_0}}, P}^r \\&+ \sum_{P' \in \mc{S}^{k+1}:P'\subseteq P} \mu(P')  \ipb{\nrm{f}_X}_{p_0,\alpha P'}^r \ip{\abs{g}}_{\frac{1}{\frac1r-\frac1{q_0}}, P'}^r}
\end{align*}
Plugging this estimate and the estimate for \framebox[20pt]{A$_{P}$} into \eqref{eq:splitform} yields
\begin{equation*}
  \int_Q \nrmb{T_Qf}_Y^r\cdot \abs{g}^r \dd \mu \leq  4^{r+3} \lambda^r \, C_T^r \, C_r^r \sum_{P \in \mc{S}} \mu(P) \ipb{\nrm{f}_X}_{p_0,\alpha P}^r \ip{\abs{g}}_{\frac{1}{\frac1r-\frac1{q_0}}, P}^r.
\end{equation*}
Since $\lambda = 4c_1c_2$ and $c_1$ and $c_2$ only depend on $S$, $\alpha$ and $\ms{D}$, this finishes the proof of the theorem.
\end{proof}

\section{Weighted bounds for sparse operators}\label{section:weights}
As discussed in the introduction, one of the main motivations to study sparse domination for an operator is to obtain (sharp) weighted bounds. In this section we will introduce Muckenhoupt weights and state weighted $L^p$-bounds for the sparse operators in the conclusions of Theorem \ref{theorem:main} and Theorem \ref{theorem:localsparse}, which are well-known in the Euclidean setting.

Let $(S,d,\mu)$ be a space of homogeneous type. A \emph{weight} is a locally integrable function $w\colon S \to (0,\infty)$. For $p \in [1,\infty)$, a Banach space $X$ and a weight $w$ the weighted Bochner space $L^p(S,w;X)$ is the space of all strongly measurable $f:S \to X$ such that
\begin{equation*}
  \nrm{f}_{L^p(S,w;X)}:= \has{\int_S\nrm{f(s)}_X^pw\dd \mu}^{1/p}<\infty.
\end{equation*}
For $p\in [1,\infty)$ and a weight $w$ we say that $w$ lies in the \emph{Muckenhoupt class $A_p$} and write $w\in A_p$ if its \emph{$A_p$-characteristic} satisfies
\begin{equation*}
  [w]_{A_p}:= \sup_{B\subseteq S} \ip{w}_{1,B}\ip{w^{-1}}_{\frac{1}{p-1},B}<\infty,
\end{equation*}
where the supremum is taken over all balls $B \subseteq S$ and the second factor is replaced by $\esssup_B w^{-1}$ if $p=1$.
For an introduction to Muckenhoupt weights we refer to \cite[Chapter 7]{Gr14a}.

\bigskip

Let $p_0, r \in [1,\infty)$, $p \in (p_0,\infty)$, $w \in A_{p/p_0}$. We are interested in the boundedness on $L^p(S,w)$ of sparse operators of the form
\begin{equation}\label{eq:sparseop2}
  f\mapsto \has{\sum_{Q \in \mc{S}} \ipb{\abs{f}}_{p_0,Q}^r \ind_{Q}}^{1/r},
\end{equation}
which appear in the conclusions of Theorem \ref{theorem:main} and Theorem \ref{theorem:localsparse}.
In the Euclidean case such bounds are thoroughly studied and most of the arguments extend directly to spaces of homogeneous type. For the convenience of the reader we will give a self-contained proof of the strong weighted $L^p$-boundedness of these sparse operators in spaces of homogeneous type, following the proof of \cite[Lemma 4.5]{Le16}. For further results we refer to:
\begin{itemize}
\item Weak weighted $L^p$-boundedness (including the endpoint $p=p_0$), for the sparse operators in \eqref{eq:sparseop2} can be found \cite{HL18, FN18}.
\item More precise bounds in terms of two-weight $A_p$-$A_\infty$-characteristics  for various special cases of the sparse operators in \eqref{eq:sparseop2} can be found in e.g. \cite{FH18, HL18,HP13,LL16}.
  \item Weighted bounds for the fractional sparse operators in Theorem \ref{theorem:fractional} can be found in \cite{FH18}
  \item Weighted bounds for the sparse forms in Theorem \ref{theorem:sparseform} can be found in \cite{BFP16, FN18}.
\end{itemize}

\begin{proposition}\label{proposition:weights}
  Let $(S,d,\mu)$ be a space of homogeneous type, let $\mc{S}$ be an $\eta$-sparse collection of cubes and take $p_0, r \in [1,\infty)$.
For $p \in (p_0,\infty)$, $w \in A_{p/p_0}$ and $f \in L^p(S,w)$ we have
\begin{align*}
  \nrms{\has{\sum_{Q \in \mc{S}} \ipb{\abs{f}}_{p_0,Q}^r \ind_{Q}}^{1/r}}_{ L^p(S,w)} &\lesssim [w]_{A_{p/p_0}}^{\max\cbraceb{\frac{1}{p-p_0},\frac{1}{r}}} \nrm{f}_{L^p(S,w)},
\end{align*}
where
the implicit constant depends on $S,p_0,p,r$ and $\eta$.
\end{proposition}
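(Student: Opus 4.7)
The plan is to follow the standard approach of \cite[Lemma 4.5]{Le16}, combining a power substitution with a duality argument against the Hardy--Littlewood maximal operator.

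First, I would normalize to $p_0=1$: setting $h:=|f|^{p_0}$, $\tilde p:=p/p_0$ and $\tilde r:=r/p_0$, the identity $\ipb{|f|}_{p_0,Q}=\ipb{h}_{1,Q}^{1/p_0}$ together with $A_{p/p_0}=A_{\tilde p}$ shows that the claim is equivalent to
\begin{equation*}
  \nrms{\has{\sum_{Q\in\mc{S}}\ipb{h}_{1,Q}^{\tilde r}\ind_Q}^{1/\tilde r}}_{L^{\tilde p}(S,w)}\;\lesssim\; [w]_{A_{\tilde p}}^{\max\cbrace{1/(\tilde p-1),\,1/\tilde r}}\;\nrm{h}_{L^{\tilde p}(S,w)}
\end{equation*}
for all $\tilde p>1$, $\tilde r>0$ and $w\in A_{\tilde p}$. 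Denote the left-hand operator by $\tilde T_{\tilde r}h$.

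In the main case $\tilde r\leq \tilde p$, I would raise the target to the $\tilde r$-th power and use $L^{\tilde p/\tilde r}$-duality against a nonnegative $g$ with $\nrm{g}_{L^{(\tilde p/\tilde r)'}(S,w)}=1$. The sparsity bound $\mu(Q)\leq \eta^{-1}\mu(E_Q)$ and the disjointness of the $E_Q$ then convert the resulting sum into
\begin{equation*}
   \sum_Q\ipb{h}_{1,Q}^{\tilde r}\int_Q gw\,\dd\mu\;\leq\;\eta^{-1}\int_S (Mh)^{\tilde r}\,M(gw)\,\dd\mu.
\end{equation*}
Splitting the weight $w$ and applying Hölder with exponents $\tilde p/\tilde r$ and $(\tilde p/\tilde r)'$ factorises this into the product $\nrm{Mh}_{L^{\tilde p}(S,w)}^{\tilde r}\cdot \nrm{M(gw)}_{L^{(\tilde p/\tilde r)'}(S,\sigma)}$ with $\sigma=w^{1-(\tilde p/\tilde r)'}$, after which weighted bounds for $M$ on each factor together with the duality identities $(q-1)(q'-1)=1$ and $[\sigma]_{A_{(\tilde p/\tilde r)'}}=[w]_{A_{\tilde p/\tilde r}}^{1/(\tilde p/\tilde r-1)}$ (applied with $q=\tilde p/\tilde r$) produce the claimed characteristic dependence. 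The remaining regime $\tilde r>\tilde p$ is handled by the pointwise inclusion $\ell^{\tilde p}\hookrightarrow\ell^{\tilde r}$: it gives $\tilde T_{\tilde r}h\leq \tilde T_{\tilde p}h$, reducing matters to the endpoint $\tilde r=\tilde p$ of the previous step and yielding the correct exponent $1/(\tilde p-1)=\max\cbrace{1/(\tilde p-1),1/\tilde r}$.

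The main obstacle will be ensuring that the weighted bookkeeping produces the sharp $\max$-exponent rather than an additive $1/(\tilde p-1)+1/\tilde r$. When $\tilde r\leq 1$ this is automatic because $A_{\tilde p}\subseteq A_{\tilde p/\tilde r}$ with comparable characteristic; for $\tilde r>1$ the weight $w$ need not lie in the stricter class $A_{\tilde p/\tilde r}$, and one must instead invoke the sharper Hytönen--Pérez refinement $\nrm{M}_{L^q(S,w)}\lesssim [w]_{A_q}^{1/q}\,[w^{1-q'}]_{A_\infty}^{1/q}$ in place of Buckley's inequality to combine the exponents correctly.
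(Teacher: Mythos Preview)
Your approach has a genuine gap in the regime $\tilde r>1$ (equivalently $r>p_0$). After your H\"older step you need $M$ bounded on $L^{(\tilde p/\tilde r)'}(S,\sigma)$ with $\sigma=w^{1-(\tilde p/\tilde r)'}$, which is equivalent to $w\in A_{\tilde p/\tilde r}$. Since $\tilde p/\tilde r<\tilde p$, this is \emph{strictly stronger} than the hypothesis $w\in A_{\tilde p}$ and fails in general. The Hyt\"onen--P\'erez refinement does not rescue this: it sharpens the constant for $M$ on $L^q(S,v)$ when $v$ already lies in $A_q$, but it still presupposes that membership. So for $p_0<r<p$ your argument does not even produce a qualitative bound, let alone the sharp one.

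The paper avoids this by using a different pair of maximal operators. Instead of the unweighted $M$ followed by Buckley, it uses the \emph{weighted dyadic} maximal operators $M^{\ms{D},w}$ and $M^{\ms{D},\sigma}$ (maximal averages with respect to the measures $w\dd\mu$ and $\sigma\dd\mu$, where now $\sigma=w^{1-(p/p_0)'}$ is the dual weight for $A_{p/p_0}$, not for $A_{p/r}$). These are bounded on the relevant $L^q$-spaces by Doob's inequality with constants depending only on $q$---no Muckenhoupt condition enters at this stage. All weight dependence is then isolated in a single scalar
\[
  c_w=\sup_{Q}\frac{w(Q)^{1/r}}{w(E_Q)^{\frac1r-\frac1p}}\,\frac{\sigma(Q)^{1/p_0}}{\sigma(E_Q)^{1/p}}\,\frac{1}{\mu(Q)^{1/p_0}},
\]
which is estimated directly by $[w]_{A_{p/p_0}}^{\max\{1/(p-p_0),\,1/r\}}$ using only H\"older's inequality and the definition of the characteristic. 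This also explains why your argument yields the additive exponent $\tfrac{1}{\tilde p-1}+\tfrac1{\tilde r}$ even in the range where it runs: two separate Buckley-type applications cannot recombine into a maximum, whereas the paper's method produces the weight constant in one piece.
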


\begin{proof}
We first note that by Proposition \ref{proposition:dyadicsystem} we may assume without loss of generality that  $\mc{S} \subseteq \ms{D}$, where $\ms{D}$ is an arbitrary dyadic system in $(S,d,\mu)$. Furthermore if $p -p_0\leq r$ we have $\max\cbraceb{\frac{1}{p-p_0},\frac{1}{r}} = \frac{1}{p-p_0}$. Since $\ell^{p-p_0}\hookrightarrow \ell^r$, the case $p -p_0\leq r$ follows from the case $p -p_0=r$, so without loss of generality we may also assume $p\geq p_0+r$.

For a weight $u$ and a measurable set $E$ we define $u(E):=\int_Eu\dd \mu$ and we denote the dyadic Hardy--Littlewood maximal operator with respect to the measure $u\dd \mu$ by $M^{\ms{D},u}$, which
is bounded on $L^p(S,u)$ for all $p \in (1,\infty)$ by Doob's maximal inequality (see e.g. \cite[Theorem 3.2.2]{HNVW16}). Take $f \in L^p(S,w)$, set $q := (p/r)'=\frac{p}{p-r}$ and take
$$g \in L^{q}(S,w^{1-q}) = \hab{L^{p/r}(S,w)}^*.$$
Then we have by the disjointness of the $E_Q$'s associated to each $Q \in \mc{S}$
\begin{equation}\label{eq:gsparse}
\begin{aligned}
  \sum_{Q \in \mc{S}} w(E_Q) \has{\frac{\mu(Q)}{w(Q)}}^{q} \ipb{\abs{g}}_{1,Q}^{q}
  &\leq \sum_{Q \in \mc{S}}\int_{E_Q} M^{\ms{D},w}(gw^{-1})^{q} w \dd\mu\\
  &\leq \nrmb{M^{\ms{D},w}(gw^{-1})}_{L^{q}(S,w)}^{q}\\&\lesssim_{p,r} \nrm{g}_{L^{q}(S,w^{1-q})}^{q}
\end{aligned}
\end{equation}
and similarly, setting $\sigma:= w^{1-(p/p_0)'}$, we have
\begin{equation}\label{eq:fsparse}
\begin{aligned}
  \sum_{Q \in \mc{S}} \sigma(E_Q) \has{\frac{\mu(Q)}{\sigma(Q)}}^{\frac{p}{p_0}} \ipb{\abs{f}^{p_0}}_{1,Q}^{{p}/{p_0}}
  &\leq \nrmb{M^{\ms{D},\sigma}(\abs{f}^{p_0}\sigma^{-1})}_{L^{p/p_0}(S,\sigma)}^{p/p_0}\\
  &\lesssim_{p,p_0} \nrm{f}_{L^{p}(S,w)}^{p}
\end{aligned}
\end{equation}
using $\sigma \cdot \sigma^{-p_0/p}=w$.
Define the constant
\begin{equation*}
  c_w := \sup_{Q \in \ms{D}} \frac{w(Q)^{1/r}}{w(E_Q)^{\frac1r-\frac1p}} \frac{\sigma(Q)^{1/p_0}}{\sigma(E_Q)^{1/p} } \frac{1}{\mu(Q)^{1/p_0}},
\end{equation*}
Then by H\"olders inequality, \eqref{eq:gsparse} and \eqref{eq:fsparse} we have
\begin{align*}
  \int_S \has{\sum_{Q \in \mc{S}} \ipb{\abs{f}}_{p_0,Q}^r \ind_{Q}}\cdot g \dd \mu  &= \sum_{Q \in \mc{S}} \mu(Q) \ipb{\abs{f}^{p_0}}_{1,Q}^{r/p_0} \ip{\abs{g}}_{1,Q}\\
     &\leq  c_w^r \sum_{Q \in \mc{S}} \has{ \sigma(E_Q)^{r/p} \has{\frac{\mu(Q)}{\sigma(Q)}}^{r/p_0} \ipb{\abs{f}^{p_0}}_{1,Q}^{r/p_0}}\\&\hspace{1cm}\cdot\has{ w(E_Q)^{{1/q}}\frac{\mu(Q)}{w(Q)} \ip{\abs{g}}_{1,Q}}\\
   &\lesssim_{p,p_0,r} c_w^r \nrmb{f}_{L^{p}(S,w)}^{r} \nrm{g}_{L^{q}(S,w^{1-q})}.
\end{align*}
So by duality it remains to show  $c_w \lesssim [w]_{A_{p/p_0}}^{\max\cbraceb{\frac{1}{p-p_0},\frac{1}{r}}}$. Fix a $Q \in \ms{D}$ and  note that by H\"olders's inequality we have
\begin{equation*}
   \mu(Q)^{p/p_0}\leq \eta^{p/p_0} \has{\int_{E_Q}w^{p_0/p} w^{-p_0/p} \dd \mu}^{p/p_0}
 \leq \eta^{p/p_0} \,w(E_Q)\,\sigma(E_Q)^{p/p_0-1}.
\end{equation*}
and thus
\begin{equation*}
  \frac{w(Q)}{w(E_Q)}\has{\frac{\sigma(Q)}{\sigma(E_Q)}}^{{p/p_0}-1} \leq \eta^{p/p_0} \frac{w(Q)}{\mu(Q)}\has{\frac{\sigma(Q)}{\mu(Q)}}^{{p/p_0}-1} \lesssim_S \eta^{p/p_0}[w]_{A_{p/p_0}}.
\end{equation*}
Therefore we can estimate
\begin{align*}
  c_w &= \sup_{Q \in \ms{D}} \bracs{\frac{w(Q)}{\mu(Q)}\has{\frac{\sigma(Q)}{\mu(Q)}}^{\frac{p}{p_0}-1}}^{\frac1p} \cdot \bracs{\has{\frac{w(Q)}{w(E_Q)}}^{\frac1r-\frac1p} \has{\frac{\sigma(Q)}{\sigma(E_Q)}}^{\frac1p}}\\
  &\lesssim_S [w]_{A_{p/p_0}}^{\frac1p} \, \sup_{Q \in \ms{D}} \bracs{\frac{w(Q)}{w(E_Q)}\has{\frac{\sigma(Q)}{\sigma(E_Q)}}^{\frac{p}{p_0}-1}}^{\max \cbraceb{{\frac1r-\frac1p}, \frac1p\frac{p_0}{p-p_0}}}\\
  &\lesssim_{S,\eta}  [w]_{A_{p/p_0}}^{\frac1p+\max \cbraceb{{\frac1r-\frac1p}, \frac1p\frac{p_0}{p-p_0}} } = [w]_{A_{p/p_0}}^{\max\cbraceb{\frac{1}{p-p_0},\frac{1}{r}}},
\end{align*}
which  finishes the proof.
\end{proof}

\section{Banach space geometry and $\mc{R}$-boundedness}\label{section:Banachspace}
Before turning to applications of Theorem \ref{theorem:main} and Theorem \ref{theorem:localsparse} in the subsequent sections, we first need to introduce some geometric properties of a Banach space $X$ and the $\mc{R}$-boundedness of a family of operators.

\subsection{Type and cotype}Let $(\varepsilon_k)_{k=1}^\infty$ be a sequence of independent \emph{Rademacher variables} on $\Omega$, i.e. uniformly distributed random variables taking values in $\cbrace{z \in \K:\abs{z} = 1}$. We say that a Banach space $X$ has (Rademacher) type $p \in [1,2]$ if for any $x_1,\ldots,x_n \in X$ we have
\begin{equation*}
  \nrms{\sum_{k=1}^n\varepsilon_k x_k}_{L^2(\Omega;X)} \lesssim_{X,p} \has{\sum_{k=1}^n \nrm{x_k}_X^p}^{1/p},
\end{equation*}
and say that $X$ has nontrivial type if $X$ has type $p>1$.
We say that $X$ has (Rademacher) cotype $q \in [2,\infty]$ if for any $x_1,\ldots,x_n \in X$ we have
\begin{equation*}
   \has{\sum_{k=1}^n \nrm{x_k}_X^q}^{1/q}\lesssim_{X,q} \nrms{\sum_{k=1}^n\varepsilon_k x_k}_{L^2(\Omega;X)},
\end{equation*}
 and say that $X$ has finite cotype if $X$ has cotype $q<\infty$. See \cite[Chapter 7]{HNVW17} for an introduction to type and cotype.

\subsection{Banach lattices and $p$-convexity and $q$-concavity.}
A Banach lattice is a partially ordered Banach space $X$ such that for $x,y \in X$
\begin{equation*}
  \abs{x} \leq \abs{y} \Rightarrow \nrm{x}_X \leq \nrm{y}_Y.
\end{equation*}
On a Banach lattice there are two properties that are closely related to type and cotype. We say that a Banach lattice is \emph{$p$-convex} with $p \in [1,\infty]$ if for $x_1,\ldots,x_n\in X$
\begin{equation*}
  \nrms{\has{\sum_{k=1}^n\abs{x_k}^p}^{1/p}}_{X} \lesssim_{X,p} \has{\sum_{k=1}^n \nrm{x_k}^p}^{1/p},
\end{equation*}
where the sum on the left-hand side is defined through the Krivine calculus. A Banach lattice is called \emph{$q$-concave} for $q \in [1,\infty]$ if for $x_1,\ldots,x_n\in X$
\begin{equation*}
   \has{\sum_{k=1}^n \nrm{x_k}^q}^{1/q} \lesssim_{X,q} \nrms{\has{\sum_{k=1}^n\abs{x_k}^q}^{1/q}}_{X}.
\end{equation*}
If a  Banach lattice has finite cotype then $p$-convexity implies type $p$. Conversely type $p$ implies $r$-convexity for all $1\leq r< p$. Similar relations hold for cotype $q$ and $q$-concavity. We refer to \cite[Chapter 1]{LT79} for an introduction to Banach lattices, $p$-convexity and $q$-concavity.

\subsection{The \texorpdfstring{$\UMD$}{UMD} property} We say that a Banach space $X$ has the {$\UMD$ property} if the martingale difference sequence of any finite martingale in $L^p(\Omega;X)$ is unconditional for some (equivalently all) $p \in (1,\infty)$. The $\UMD$ property implies reflexivity, nontrivial type and finite cotype.
For an  introduction to the theory of $\UMD$ Banach spaces we refer the reader to \cite[Chapter 4]{HNVW16} and \cite{Pi16}.

\subsection{\texorpdfstring{$\mc{R}$}{R}-Boundedness} Let $X$ and $Y$ be Banach spaces and $\Gamma \subseteq \mc{L}(X,Y)$. We say that $\Gamma$ is $\mc{R}$-bounded if for any $x_1,\ldots,x_n$ and $T_1,\ldots,T_n \in \Gamma$ we have
\begin{equation*}
  \has{\E\nrmb{\sum_{k=1}^n \varepsilon_k T_kx_k}^2}^{1/2} \lesssim \has{\E\nrmb{\sum_{k=1}^n \varepsilon_k x_k}^2}^{1/2},
\end{equation*}
 where $(\varepsilon_k)_{k=1}^\infty$ is a sequence of independent {Rademacher variables}
The least admissible implicit constant is denoted by $\mc{R}(\Gamma)$. $\mc{R}$-boundedness is a strengthening of uniform boundedness and is often a key assumption to prove boundedness of operators on Bochner spaces. We refer to \cite[Chapter 8]{HNVW17} for an introduction to $\mc{R}$-boundedness.

\section{The \texorpdfstring{$A_2$}{A2}-theorem for operator-valued  Calder\'on--Zygmund operators in a space of homogeneous type} \label{section:A2}
The $A_2$-theorem, first proved by Hyt\"onen in \cite{Hy12} as discussed in the introduction, states that a Calder\'on--Zygmund operator is bounded on $L^2(\R^d,w)$ with a bound that depends linearly on the $A_2$-characteristic of $w$. From this sharp weighted bounds for all $p\in (1,\infty)$ can be obtained by sharp Rubio de Francia extrapolation \cite{DGPP05}. Since its first proof by Hyt\"onen, the $A_2$-theorem has been extended in various directions. We mention two of these extensions relevant for the current discussion:
\begin{itemize}
  \item The $A_2$-theorem for Calder\'on--Zygmund operators on a geometric doubling metric space
 was first proven by Nazarov, Reznikov and Volberg \cite{NRV13}, afterwards it was proven on a space of homogeneous type
 by Anderson and  Vagharshakyan \cite{AV14} (see also \cite{An15}) using Lerner's mean oscillation decomposition method. It was further extended to the setting of ball bases by Karagulyan \cite{Ka16}.
  \item The $A_2$-theorem for vector-valued Calder\'on--Zygmund operators with operator-valued kernel was proven by H\"anninen and Hyt\"onen \cite{HH14}, using a suitable adapted version of Lerner's median oscillation decomposition.
\end{itemize}
In this section we will prove sparse domination for vector-valued Calder\'on--Zygmund operators with operator-valued kernel on a space of homogeneous type. This yields the $A_2$-theorem for these Calde\'ron--Zygmund operators, unifying the results from \cite{AV14} and \cite{HH14}.

As an application of this theorem, we will prove a weighted, anisotropic, mixed norm Mihlin multiplier theorem in the next section. We will also use it to study maximal regularity for parabolic partial differential equations in forthcoming work. In these applications $S$ is (a subset of) $\R^d$ equipped with the anisotropic quasi-norm
\begin{equation}\label{eq:anisotropic}
  \abs{s}_{\mbs{a}} := \has{\sum_{j=1}^d\abs{s_j}^{2/a_j}}^{1/2}, \qquad s \in \R^d.
\end{equation}
for some $\mbs{a} \in (0,\infty)^d$ and the Lebesgue measure.

In a different direction our $A_2$-theorem  can be applied in the study of fundamental harmonic analysis operators associated with various discrete and continuous orthogonal expansions, started by Muckenhoupt and Stein \cite{MS65}. In the past decade there has been a surge of results in which such operators are proven to be vector-valued Calder\'on--Zygmund operators on concrete spaces of homogeneous type. Weighted bounds are then often concluded using \cite[Theorem III.1.3]{RRT86} or \cite{RT88}. With our $A_2$-theorem these results can be made quantitative in terms of the $A_p$-characteristic. We refer to \cite{BCN12,BMT07,CGRTV17,NS12,NS07} and the references therein for an overview of the recent developments in this field.

\bigskip

Let $(S,d,\mu)$ be a space of homogeneous type, $X$ and $Y$ be Banach spaces and let $$K\colon(S \times S)\setminus\cbrace{(s,s):s \in S} \to \mc{L}(X,Y)$$
be strongly measurable in the strong operator topology.
We say that $K$ is a \emph{Dini kernel} if there is a $c_K \geq 2$ such that
  \begin{align*}
   \nrm{K(s,t)-K(s,t')} &\leq
   \omega \has{\frac{d(t,t')}{d(s,t)}}\frac{1}{\mu\hab{{B(s,d(s,t))}}},  &&0<d(t,t')\leq \frac{1}{c_K}d(s,t),\\
    \nrm{K(s,t)-K(s',t)} &\leq  \omega \has{\frac{d(s,s')}{d(s,t)}} \frac{1}{\mu\hab{{B(s,d(s,t))}}}, &&0<d(s,s')\leq \frac{1}{c_K}d(s,t),
  \end{align*}
    where $\omega:[0,1]\to [0,\infty)$ is increasing, subadditive, $\omega(0)=0$ and
  \begin{equation*}
    \nrm{K}_{\Dini}:=\int_0^1\omega(t)\frac{\ddn t}{t} <\infty.
  \end{equation*}
  Take $p_0 \in [1,\infty)$ and let
\begin{equation*}
   T \colon L^{p_0}(S;X) \to L^{p_0,\infty}(S;Y)
\end{equation*}
be a bounded linear operator. We say that $T$ has Dini kernel $K$ if for every boundedly supported $f \in L^{p_0}(S;X)$ and a.e. $s \in S \setminus \overline{\supp f}$ we have
\begin{equation*}
  Tf(s) = \int_S K(s,t)f(t)\dd t.
\end{equation*}

\begin{theorem}\label{theorem:A2}
 Let $(S,d,\mu)$ be a space of homogeneous type and let $X$ and $Y$ be Banach spaces. Let $p_0 \in [1,\infty)$ and suppose $T$ is a bounded linear operator from $L^{p_0}(S;X)$ to  $L^{p_0,\infty}(S;Y)$
with Dini kernel $K$.  Then for every boundedly supported $f \in L^1(S;X)$ there exists an $\eta$-sparse collection of cubes $\mc{S}$ such that
\begin{equation*}
    \nrm{Tf(s)}_Y\lesssim_{S,p_0} \,C_T\, \sum_{Q \in \mc{S}} \ipb{\nrm{f}_X}_{1,Q} \ind_{Q}(s), \qquad s \in S.
\end{equation*}
 Moreover, for all $p \in (1,\infty)$ and $w \in A_p$ we have
 \begin{align*}
   \nrm{T}_{L^p(S,w;X) \to L^p(S,w;Y)} &\lesssim_{S,p,p_0} C_T \,[w]_{A_p}^{\max\cbrace{\frac{1}{p-1},1}}
 \end{align*}
 with $C_{T}:= \nrm{T}_{L^{p_0}(S;X)\to L^{p_0,\infty}(S;Y)} + \nrm{K}_{\Dini}$.
\end{theorem}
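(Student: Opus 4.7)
The plan is to apply Theorem \ref{theorem:main} with $p_1 = p_2 = r = 1$, so that the resulting sparse domination automatically involves the $\ell^1$-averages $\ipb{\nrm{f}_X}_{1,Q}$ that appear in the statement. The $r$-sublinearity hypothesis for $r=1$ holds trivially with $C_r=1$ because $T$ is linear. Fix $\alpha := 3c_d^2/\delta$. Two auxiliary bounds must then be verified: the weak $(1,1)$-boundedness of $T$, and the weak $(1,1)$-boundedness of $\mc{M}^{\#}_{T,\alpha}$.

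The second bound will be obtained through the pointwise estimate
\begin{equation*}
  \mc{M}^{\#}_{T,\alpha} f(s) \lesssim_{S,\alpha} \nrm{K}_{\Dini}\cdot M(\nrm{f}_X)(s), \qquad s\in S,
\end{equation*}
where $M$ is the Hardy--Littlewood maximal operator of Section \ref{subsection:HL}. Given a ball $B\ni s$ and $s',s''\in B$, one writes $T(f\ind_{S\setminus\alpha B})(s') - T(f\ind_{S\setminus\alpha B})(s'')$ using the kernel representation, decomposes $S\setminus\alpha B$ into dyadic annuli of the form $2^k\alpha B\setminus 2^{k-1}\alpha B$, applies the H\"older estimate in the first variable of $K$ on each annulus, and uses doubling to bring in a factor of $M(\nrm{f}_X)(s)$. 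The modulus $\omega$ then telescopes to $\nrm{K}_{\Dini}$. Weak $(1,1)$-boundedness of $\mc{M}^{\#}_{T,\alpha}$ then follows from that of $M$.

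For the weak $(1,1)$-boundedness of $T$, I would run a Calder\'on--Zygmund decomposition of $\nrm{f}_X$ in $(S,d,\mu)$ at level $\lambda>0$, using the dyadic systems from Proposition \ref{proposition:dyadicsystem}. This produces pairwise disjoint cubes $\{Q_j\}$ with $\ipb{\nrm{f}_X}_{1,Q_j}\sim\lambda$ and a splitting $f = g + \sum_j b_j$ in which $\nrm{g(s)}_X\lesssim\lambda$ a.e., each $X$-valued $b_j$ is Bochner-supported on $Q_j$ with $\int_S b_j\dd\mu = 0$, and $\sum_j \nrm{b_j}_{L^1(S;X)}\lesssim \nrm{f}_{L^1(S;X)}$. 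The good part is controlled by Chebyshev together with the $L^{p_0}\to L^{p_0,\infty}$-boundedness of $T$ and the standard interpolation $\nrm{g}_{L^{p_0}}^{p_0}\lesssim\lambda^{p_0-1}\nrm{f}_{L^1}$. The bad part, on $S\setminus\bigcup_j\alpha Q_j$, is handled by exploiting the cancellation $\int b_j\dd\mu=0$ and applying the Dini estimate in the second variable of $K$ exactly as in the previous paragraph, while the contribution from inside $\bigcup_j\alpha Q_j$ is controlled by doubling. Combining these gives the weak $(1,1)$-bound with constant controlled (up to a possible $p_0$-dependent power) by $C_T$.

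With both hypotheses verified, Theorem \ref{theorem:main} yields the claimed pointwise $\ell^1$-sparse domination, and Corollary \ref{corollary:main} applied with $p_0 = 1$ and $r = 1$ yields the weighted bound with exponent $\max\{\tfrac{1}{p-1},1\}$ for every $p\in(1,\infty)$ and $w\in A_p$. The main technical obstacle is the Calder\'on--Zygmund step: while classical in the scalar Euclidean setting, here it must be carried out with operator-valued kernels and $X$-valued functions on a general space of homogeneous type, so the bad-part estimate must go through using only the pointwise Dini condition on $K$ (paired with $b_j$ via Bochner integration) and not any unavailable vector-valued strong boundedness of $T$.
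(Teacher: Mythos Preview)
Your approach is essentially identical to the paper's: apply Theorem \ref{theorem:main} with $p_1=p_2=r=1$, obtain the weak $L^1$-bound for $T$ via the classical Calder\'on--Zygmund decomposition (the paper cites \cite[Theorem III.1.2]{RRT86} rather than redoing it), and dominate $\mc{M}^{\#}_{T,\alpha}f$ pointwise by $\nrm{K}_{\Dini}\,M(\nrm{f}_X)$ through the kernel representation and a dyadic annulus decomposition.

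One small technical point: your choice $\alpha = 3c_d^2/\delta$ is the minimal value permitted by Theorem \ref{theorem:main}, but it need not be large enough to invoke the Dini smoothness estimate, which requires $d(s',s'') \leq c_K^{-1} d(s',t)$ for $s',s''\in B$ and $t\notin\alpha B$. The paper takes $\alpha = 3c_d^2\max\{\delta^{-1},c_K\}$ precisely to guarantee this; with only $\alpha = 3c_d^2/\delta$ and a large $c_K$, the annulus argument you outline would not go through. This is an easy fix and does not affect the substance of your plan.
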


\begin{proof}
  We will check the assumptions of Theorem \ref{theorem:main} with $p_1=p_2=r=1$. The  weak $L^1$-boundedness of $T$ with
  \begin{equation*}
    \nrm{T}_{L^{1}(S;X)\to L^{1,\infty}(S;Y)} \lesssim_{S,p} C_T.
  \end{equation*}
  follows from the classical Calder\'on-Zygmund argument, see e.g. \cite[Theorem III.1.2]{RRT86}.
  The $1$-sublinearity assumption on $T$ follows from the triangle inequality, so the only thing left to check is the weak $L^1$-boundedness of $\mc{M}_{T,\alpha}^{\#}$.
 Let
 $$\alpha := 3 \,c_d^2 \,\max\cbraceb{\delta^{-1},c_K}$$
  with $c_d$ the quasi-metric constant, $\delta$ as in Proposition \ref{proposition:dyadicsystem} and $c_K$ the constant from the definition of a Dini kernel.
  Fix $s \in S$ and a ball $B=B(z,\rho)$ such that $s \in B$. Then for any $s',s'' \in B$ and $t \in S \setminus \alpha B$ we have
  \begin{align*}
    d(s',t) &\geq \frac{1}{c_d}d(z,t)-d(z,s') \geq  \frac{\alpha \rho}{c_d} -\rho \geq 2 \,c_K \,c_d\, \rho=:\varepsilon\\
    d(s',s'') &\leq 2\,c_d\, \rho = c_K^{-1}\varepsilon
  \end{align*}
Therefore we have for any boundedly supported $f \in L^1(S;X)$
  \begin{align*}
    \nrm{T(\ind_{S \setminus \alpha B}&f)(s') - T_K(\ind_{S \setminus \alpha B}f)(s'')}_{Y} \\
    &\leq  \int_{S\setminus \alpha B}\nrmb{\hab{K(s',t) - K(s'',t)}f(t)}_Y\dd \mu(t)\\
       &\leq   \int_{d(s',t)>\varepsilon}\omega\has{\frac{d(s',s'')}{d(s',t)}}\frac{1}{\mu\hab{{B(s',d(s',t))}}} \nrm{f(t)}_X\dd \mu(t)\\
       &\leq \sum_{j=0}^\infty \omega\hab{c_K^{-1} 2^{-j}} \int_{2^j\varepsilon<d(s',t)\leq 2^{j+1}\varepsilon}\frac{1}{\mu\hab{{B(s',d(s',t))}}}\nrm{f(t)}_X\dd \mu(t)\\
       &\lesssim_S  \sum_{j=0}^\infty\omega\hab{2^{-j-1}} \avint_{B(s',2^{j+1}\varepsilon)}\nrm{f(t)}_X\dd \mu(t)\\
       &\leq \nrm{K}_{\Dini}\,   M\hab{\nrm{f}_X}(s),
  \end{align*}
  where the last step follows from $s \in B(s',2^{j+1}\varepsilon)$ for all $j \in \N$ and
  \begin{equation*}
    \sum_{j=0}^\infty\omega\hab{2^{-j-1}} \leq \sum_{j=0}^\infty\omega\hab{2^{-j-1}} \int_{2^{-j-1}}^{2^{-j}}\frac{\ddn t}{t} \leq \sum_{j=0}^\infty \int_{2^{-j-1}}^{2^{-j}} \omega(t) \frac{\ddn t}{t} = \nrm{K}_{\Dini}.
  \end{equation*}
So taking the supremum over all $s',s'' \in B$ and all balls $B$ containing $s$ we find that $\mc{M}_{T,\alpha}^{\#}f(s) \lesssim_S \nrm{K}_{\Dini}  \, M\hab{\nrm{f}_X}(s)$. Thus by the weak $L^1$-boundedness of the Hardy--Littlewood maximal operator and the density of boundedly supported functions in $L^1(S;X)$ we get
\begin{equation*}
  \nrmb{\mc{M}_{T,\alpha}^{\#}}_{L^{1}(S;X)\to L^{1,\infty}(S;Y)} \lesssim_{S} \nrm{K}_{\Dini}.
\end{equation*}
The pointwise sparse domination now follows from Theorem \ref{theorem:main} and the  weighted bounds from  Proposition \ref{proposition:weights}.
\end{proof}

\begin{remark} In the proof of Theorem \ref{theorem:A2} it actually suffices to use the so-called $L^r$-H\"ormander condition for some $r>1$, which is implied by the Dini condition. See \cite[Section 3]{Li18} for the definition of the $L^r$-H\"ormander condition and a comparison between the $L^r$-H\"ormander and the Dini condition.
\end{remark}

Note that Theorem \ref{theorem:A2} does not assume anything about the Banach spaces $X$ and $Y$ and is therefore applicable in situations where for example $Y=\ell^\infty$. However, in various applications $X$ and $Y$ will need to have the $\UMD$ property in order to check the assumed
      weak $L^{p_0}$-boundedness of $T$ for some $p_0 \in [1,\infty)$. For instance, for a large class of operators the weak $L^{p_0}$-boundedness of $T$ can be checked using theorems like the $T(1)$-theorem  or $T(b)$-theorem. See \cite{Fi90} and \cite{Hy14} for these theorems in the vector-valued setting, which assume the $\UMD$ property for the underlying Banach space.

      If $S$ is Euclidean space, one can also use an (operator-valued) Fourier multiplier theorem to check the a priori $L^{p_0}$-bound, which we will discuss in the next section.

\section{The weighted anisotropic mixed-norm Mihlin multiplier theorem}
     Let $X$ and $Y$ be Banach spaces. Denote the space of $X$-valued Schwartz functions by $\mc{S}(\R^d;X)$ and  the space of $Y$-valued tempered distributions by $\mc{S}'(\R^d;Y):= \mc{L}(\mc{S}(\R^d);Y)$. To an $m \in L^\infty(\R^d;\mc{L}(X,Y))$ we associate the Fourier multiplier operator
\begin{equation*}
  T_m\colon\mc{S}(\R^d;X) \to \mc{S}'(\R^d;Y), \qquad T_m f = (m\widehat{f}\vspace{2pt})^\vee.
\end{equation*}
Since $\mc{S}(\R^d;X)$ is dense in $L^p(\R^d;X)$ and $L^p(\R^d;Y)$ is continuously embedded into $\mc{S}'(\R^d;X)$, one may ask under which conditions on $m$ the operator $T_m$ extends to a bounded operator from $L^p(\R^d;X)$ to $L^p(\R^d;Y)$. If this is the case we call $m$ a bounded Fourier multiplier. We refer to \cite[Chapter 5]{HNVW16} for an introduction to operator-valued Fourier multiplier theory.

One of the main Fourier multiplier theorems is the Mihlin multiplier theorem, first proven in the operator-valued setting by Weis in \cite{We01b}.
The operator-valued Mihlin multiplier theorem of Weis has since been extended in many directions. Recently Fackler, Hyt\"onen and Lindemulder extended the operator-valued Mihlin multiplier theorem to a weighted, anisotropic, mixed norm setting in \cite{FHL18}. This is for example useful in the study of spaces of smooth, vector-valued functions and has applications to parabolic PDEs with inhomogeneous boundary conditions, see e.g. \cite{Li17b}.
 In \cite{FHL18} the Mihlin multiplier theorem is shown using the following two approaches:
\begin{itemize}
  \item Using a weighted Littlewood--Paley decomposition, they show a weighted, anisotropic, mixed-norm
  Mihlin multiplier theorem for rectangular $A_p$-weights, i.e. $A_p$-weights for which the defining supremum is taken over rectangles instead of balls.
  \item  Using Calder\'on--Zygmund theory, they show a weighted, isotropic, non-mixed-norm
  Mihlin multiplier theorem for cubicular $A_p$-weights, i.e. $A_p$-weights for which the defining supremum is taken over cubes, which is equivalent to the definition using balls we used
in Section \ref{section:weights}.
\end{itemize}
Both approaches have their pros and cons. The result using a Littlewood--Paley decomposition only requires estimates of $\partial^\theta m$ for $\theta \in \cbrace{0,1}$, whereas the approach using Calder\'on--Zygmund theory also requires estimates of higher-order derivatives. On the other hand,
the class of rectangular $A_p$-weights is a proper subclass of the class of cubicular $A_p$-weights.

In applications it is be desirable to have the Mihlin multiplier theorem for cubicular $A_p$-weights in the anisotropic, mixed-norm setting as well. This would remove the need to distinguish between the isotropic and anisotropic setting in e.g. \cite[(6) on p.64]{Li17b}.
In order to obtain the Mihlin multiplier theorem for cubicular $A_p$-weights in the anisotropic, mixed-norm setting one needs Calder\'on--Zygmund theory in $\R^d$ equipped with an anisotropic norm. Since this is a special case of a space of homogeneous type, we can use Theorem \ref{theorem:A2} to supplement the results of \cite{FHL18}, which will be the main result of this section.

\bigskip

Let us introduce the anisotropic, mixed-norm setting. For $\mbs{a} \in (0,\infty)^d$ let $\abs{\,\cdot\,}_{\mbs{a}}$ be the anisotropic quasi-norm as in \eqref{eq:anisotropic} and define $$\R^d_{\mbs{a}} := (\R^d, \abs{\,\cdot-\cdot\,}_{\mbs{a}}, \dd t),$$ where $\dd t$ denotes Lebesgue measure. Then $\R^d_{\mbs{a}}$ is a space of homogeneous type and e.g.
  \begin{align*}
  \ms{D}&:= \cbraces{\prod_{j=1}^d \hab{2^{-a_jn}([0,1) +m_j)} :\mbs{m}\in \Z^d, n \in \Z}
\end{align*}
is a dyadic system in $\R^d_{\mbs{a}}$.
 We write
$\abs{\mbs{a}}_1:=\sum_{j=1}^d a_j,$ $\abs{\mbs{a}}_\infty := \max_{j=1,\ldots,d}a_j,$
and for $\theta \in \N^d$ we set $\mbs{a} \cdot\theta := \sum_{j=1}^d a_j \theta_{j}$.

Take $l \in \N$, $\mathpzc{d} \in \N^l$ and consider the $\mathpzc{d}$-decomposition of $\R^d$:
\begin{equation*}
\R^{d}_{\mz{d}}:= \R^{\mathpzc{d}_1} \times \ldots \times \R^{\mathpzc{d}_l}.
\end{equation*}
For a $t \in \R^{d}_{\mz{d}}$ we write $t = (t_1,\ldots,t_l)$ with $t_j \in \R^{\mz{d}_j}$ for $j=1,\ldots,l$ and similarly we write $\mbs{a} = (\mbs{a}_1,\ldots,\mbs{a}_l)$.
For $\mbs{p} \in [1,\infty)^l$, a vector of weights $\mbs{w}  \in \prod_{j=1}^l A_p(\R^{\mathpzc{d}_j}_{
{\mbs{a}_j}})$ and a Banach space $X$ we define the weighted mixed-norm Bochner space $L^{\mbs{p}}(\R^{d}_{\mz{d}},\mbs{w};X)$ as the space of all strongly measurable $f:\R^{d}_{\mz{d}} \to X$ such that
\begin{equation*}
  \nrm{f}_{L^{\mbs{p}}(\R^{d}_{\mz{d}},\mbs{w};X)}:= \has{\int_{\R^{\mz{d}_1}}\ldots\has{\int_{\R^{\mz{d}_l}}\nrm{f}_X^{p_l} w_l\dd t_l}^{\frac{p_{l-1}}{p_l}}\ldots w_1 \dd t_1}^{\frac{1}{p_1}}
\end{equation*}
is finite.

We are now ready to state and prove the announced weighted anisotropic, mixed-norm Mihlin multiplier theorem.

\begin{theorem}\label{theorem:mihlin}
Let $X$ and $Y$ be $\UMD$ Banach spaces, set $N = \abs{\mbs{a}}_1+ \abs{\mbs{a}}_\infty+ 1$
 and let $m \in L^\infty(\R^d;\mc{L}(X,Y))$. Suppose that for all $\theta \in \N^d$ with $\mbs{a} \cdot\theta \leq N$ the distributional derivative $\partial^\theta m$ coincides with a continuous function on $\R^d \setminus \cbrace{0}$ and we have the $\mc{R}$-bound
\begin{equation*}
 \mc{R}\hab{\cbraceb{\abs{\xi}_{\mbs{a}}^{\mbs{a} \cdot \theta}\cdot  \partial^\theta m(\xi): \xi \in \R^d}\setminus\cbrace{0}} \leq C_m.
\end{equation*}
for some $C_m>0$. Then for every compactly supported $f \in L^1(\R^d;X)$ there exists an $\eta$-sparse collection of anisotropic cubes $\mc{S}$ such that
\begin{equation*}
    \nrm{T_mf(s)}_Y\lesssim_{X,Y,\mbs{a}} \,C_m\, \sum_{Q \in \mc{S}} \ipb{\nrm{f}_X}_{1,Q} \ind_{Q}(s), \qquad s \in \R^d.
\end{equation*}
Moreover, for all $\mbs{p} \in (1,\infty)^l$ and $\mbs{w} \in  \prod_{j=1}^l A_{p_j}(\R^{\mz{d}_j}_{\mbs{a}_j})$ we have
\begin{align*}
  \nrm{T_m}_{L^{\mbs{p}}(\R^d_{\mz{d}},\mbs{w};X) \to L^{\mbs{p}}(\R^d_{\mz{d}},\mbs{w};Y)} &\lesssim_{X,Y,\mz{d},\mbs{a},\mbs{p}, \mbs{w}} C_m.
\end{align*}
\end{theorem}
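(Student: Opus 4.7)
The plan is to recognise $T_m$ as a vector-valued Calderón--Zygmund operator on the space of homogeneous type $\R^d_{\mbs{a}}$, apply Theorem \ref{theorem:A2} to obtain the sparse bound and a single-weight estimate, and then iterate through the coordinate blocks of the mixed-norm space.

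To invoke Theorem \ref{theorem:A2} I need an $L^{p_0}$ a priori bound for some $p_0 \in [1,\infty)$ and a Dini kernel with respect to the anisotropic metric $\abs{\,\cdot\,}_{\mbs{a}}$. For the a priori bound, an anisotropic dyadic rescaling $\xi_j \mapsto t^{a_j}\xi_j$ turns the hypothesis into a standard $\mc{R}$--Mihlin condition, so Weis' operator-valued Mihlin multiplier theorem (applicable since $X$ and $Y$ are $\UMD$) yields $\nrm{T_m}_{L^2(\R^d;X) \to L^2(\R^d;Y)} \lesssim C_m$. For the kernel, I decompose $m = \sum_{k \in \Z} m_k$ via a smooth dyadic partition of unity adapted to the anisotropic annuli $\cbrace{|\xi|_{\mbs{a}} \sim 2^k}$, and integrate by parts the inverse Fourier transform of each $m_k$ in the coordinate carrying the largest weight $\abs{\mbs{a}}_\infty$. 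Combined with Kahane's contraction principle applied to the $\mc{R}$-bounded family $\cbrace{\abs{\xi}_{\mbs{a}}^{\mbs{a}\cdot\theta}\partial^\theta m(\xi)}$, this produces
\begin{align*}
\nrm{K(s,t)}_{\mc{L}(X,Y)} &\lesssim \frac{C_m}{\abs{s-t}_{\mbs{a}}^{\abs{\mbs{a}}_1}}, \\
\nrm{K(s,t)-K(s,t')}_{\mc{L}(X,Y)} &\lesssim \frac{C_m}{\abs{s-t}_{\mbs{a}}^{\abs{\mbs{a}}_1}} \cdot \frac{\abs{t-t'}_{\mbs{a}}}{\abs{s-t}_{\mbs{a}}}
\end{align*}
whenever $\abs{t-t'}_{\mbs{a}} \leq \tfrac12\abs{s-t}_{\mbs{a}}$, and symmetrically in $s$. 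The threshold $N = \abs{\mbs{a}}_1 + \abs{\mbs{a}}_\infty + 1$ is dictated precisely by this bookkeeping: $\abs{\mbs{a}}_\infty$ derivatives are spent to outrun the homogeneous dimension $\abs{\mbs{a}}_1$ along the slowest coordinate, and one extra derivative secures the Hölder modulus of the kernel. Since $\mu(B_{\mbs{a}}(s,r)) \simeq r^{\abs{\mbs{a}}_1}$, this is the Dini condition on $\R^d_{\mbs{a}}$ with $\nrm{K}_{\Dini} \lesssim C_m$.

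Theorem \ref{theorem:A2} then yields the pointwise sparse domination claimed in the statement and the single-weight estimate
\begin{equation*}
\nrm{T_m}_{L^p(\R^d_{\mbs{a}},w;X) \to L^p(\R^d_{\mbs{a}},w;Y)} \lesssim C_m\, [w]_{A_p(\R^d_{\mbs{a}})}^{\max\cbrace{\frac{1}{p-1},1}}
\end{equation*}
for every $p \in (1,\infty)$ and every $w \in A_p(\R^d_{\mbs{a}})$. To promote this to the mixed-norm statement, I iterate over the blocks: $T_m$ is viewed as a Fourier multiplier in the variable $t_l$ whose symbol $\xi_l \mapsto m(\,\cdot\,,\xi_l)$ takes values in Fourier multipliers on the first $l-1$ blocks. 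The anisotropic $\mc{R}$--Mihlin hypothesis restricts to such a partial symbol, the inner mixed-norm Bochner space inherits $\UMD$, and the single-block weighted bound applies with operator norm still controlled by $C_m$. A finite induction on $l$ combined with Fubini delivers the mixed-norm estimate for arbitrary $\mbs{p} \in (1,\infty)^l$ and $\mbs{w} \in \prod_{j=1}^l A_{p_j}(\R^{\mz{d}_j}_{\mbs{a}_j})$.

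The main obstacle is the quantitative anisotropic kernel estimate: in the operator-valued setting pointwise methods are unavailable, so both decay and Hölder regularity must be extracted from the $\mc{R}$-bounded dyadic pieces via vector-valued Littlewood--Paley arguments, and the derivative count $N$ has to be balanced against the anisotropic homogeneous dimension with only finitely many integrations by parts available.
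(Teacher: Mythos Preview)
Your reduction to Theorem \ref{theorem:A2} for the sparse bound and the single-weight estimate is essentially what the paper does: it cites \cite{Hy07} for the unweighted $L^2$-bound and \cite[Lemma 4.4.6 and 4.4.7]{Li14b} for the anisotropic Dini kernel estimates that you sketch by hand. One small point you omit is the verification that $T_m$ actually has kernel $\widecheck{m}(s-t)$ off the diagonal, which the paper handles by a Schwartz-function approximation argument; this is routine but should be mentioned.

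The mixed-norm step is where your approach diverges from the paper, and where there is a genuine gap. You propose to iterate by viewing $T_m$ as a multiplier in $\xi_l$ with symbol valued in multiplier operators on the remaining blocks, and you say ``the single-block weighted bound applies with operator norm still controlled by $C_m$''. But to run the outer Mihlin theorem you need the \emph{$\mc{R}$-boundedness} of the family $\cbrace{\abs{\xi_l}_{\mbs{a}_l}^{\mbs{a}_l\cdot\theta_l}\,T_{\partial^{\theta_l}_{\xi_l} m(\,\cdot\,,\xi_l)}}$ as operators on the \emph{weighted} inner space $L^{\mbs{p}'}(\R^{d-\mz{d}_l}_{\mz{d}'},\mbs{w}';X)$, not merely a uniform operator-norm bound. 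The hypothesis only gives $\mc{R}$-bounds in $\mc{L}(X,Y)$; transferring this to $\mc{R}$-bounds of the associated multiplier operators on a weighted Bochner space is an extra step (typically via the principle that uniform $A_p$-weighted bounds imply $\mc{R}$-boundedness), and without it the induction does not close.

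The paper avoids this entirely by a different and cleaner route: it observes that product weights $v(t) = v_1(t_1)\,w_2(t_2)$ with $v_1 \in A_{p_2}(\R^{\mz{d}_1}_{\mbs{a}_1})$ and $w_2 \in A_{p_2}(\R^{\mz{d}_2}_{\mbs{a}_2})$ belong to $A_{p_2}(\R^d_{\mbs{a}})$, applies the already-established single-weight bound on the full space to get $\nrm{T_mf}_{L^{p_2}(\R^d,v;Y)} \lesssim C_m \nrm{f}_{L^{p_2}(\R^d,v;X)}$, and then runs Rubio de Francia extrapolation in the $t_1$-variable (with $w_2$ frozen) to pass from $p_2$ to $p_1$. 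This uses only scalar extrapolation and never needs operator-valued $\mc{R}$-bounds on the intermediate spaces.
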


\begin{proof}
 We will check the conditions of Theorem \ref{theorem:A2}. By \cite[Theorem 3]{Hy07}, which trivially extends to the case $X \neq Y$,
  we know that $T_m$ is bounded from $L^2(\R^d;X)$ to $L^2(\R^d;Y)$ with
  \begin{equation*}
    \nrm{T_m}_{L^2(\R^d;X) \to L^2(\R^d;Y)} \lesssim_{X,Y,\mz{d},\mbs{a}} C_m.
  \end{equation*}
  By \cite[Lemma 4.4.6 and 4.4.7]{Li14b} we know that $\widecheck{m}$ coincides with a continuous function on $\R^d \setminus \cbrace{0}$, which is bounded away from $0$ and $$K(t,s):= \widecheck{m}(t-s), \qquad t\neq s$$  is a Dini kernel on the space of homogeneous type  $\R^d_{\mbs{a}}$ with
  \begin{equation*}
    \omega(r) = C_{\mbs{a}} \cdot C_m \cdot r^{\min \mbs{a}}, \qquad r \in [0,1].
  \end{equation*}
  Now let $f \in L^p(\R^d;X)$ with compact support. Fix a $c \in \R^d \setminus \overline{\supp f}$ and take $r>0$ such that $B(c,2r) \cap \overline{\supp f} = \varnothing$. Take a sequence $(f_n)_{n=1}^\infty$ in $\mc{S}(\R^d;X)$ such that $\overline{\supp f_n}  \cap B(c,r)= \varnothing$ and $f_n \to f$ in $L^2(\R^d;X)$. Then $Tf_n \to Tf$ in $L^2(\R^d;X)$ and, by passing to a subsequence if necessary, we have $f_n(t) \to f(t)$ and $Tf_n(t) \to Tf(t)$ for a.e. $t \in \R^d$. Fix $n \in \N$, then we have for all $\varphi \in C_c^\infty(\R^d \setminus \overline{\supp f_n})$
  \begin{align*}
    \ip{T_m f_n, \varphi}  &= \int_{\R^d} m(s)\widehat{f_n}(s) \widecheck{\varphi}(s)\dd s\\
    &= \int_{\R^d}\widecheck{m}(s)   \int_{\R^d}f_n(t-s) \varphi(t) \dd t\dd s \\
    &= \int_{\R^d} \int_{\R^d} K(t,s)f_n(s)\dd s \, \varphi(t) \dd t
  \end{align*}
  from which we obtain for a.e. $t \in B(c,r)$
  \begin{equation*}
    T_m f(t) = \lim_{n\to \infty} T_m f_n(t) =  \lim_{n \to \infty} \int_{\R^d}K(t,s)f_n(s)\dd s = \int_{\R^d}K(t,s)f(s)\dd s
  \end{equation*}
  Covering $\R^d \setminus \overline{\supp f}$ by countably many such balls, we conclude that $T_m$ has kernel $K$. Therefore the sparse domination, as well as the weighted estimate in case $l=1$, follows from Theorem \ref{theorem:A2}.

To conclude the proof we will show the case $l=2$, the general case follows by iterating the argument. Take $\mbs{p} \in (1,\infty)^2$ and $\mbs{w} \in A_{p_1}(\R^{\mz{d}_1}_{\mbs{a}_1}) \times A_{p_2}(\R^{\mz{d}_2}_{\mbs{a}_2})$. For $v_1 \in A_{p_2}(\R^{\mz{d}_1}_{\mbs{a}_1})$ note that
 $${v}(t):= v_1(t_1)\cdot  w_2(t_2) , \qquad t \in \R^{\mz{d}_1} \times \R^{\mz{d}_2}$$ belongs to $A_{p_2}(\R^d_{\mbs{a}})$, so by the case $l=1$ we have
   \begin{equation*}
      \nrm{T_mf}_{L^{p_2}(\R^d,{v};Y)} \lesssim_{X,Y,\mz{d},\mbs{a},p_2, {v}} C_m \cdot \nrm{f}_{L^{p_2}(\R^d,{v};X)}
   \end{equation*}
   for all $f \in L^{p_2}(\R^d, {v};X)$.
Since  balls in $\R^{\mz{d}_2}$ with respect to the quasi-metric $\abs{\,\cdot-\cdot\,}_{\mbs{a}_2}$ form a Muckenhoupt basis, we can use Rubio de Francia extrapolation as in \cite[Theorem 3.9]{CMP11} on the extrapolation family
  \begin{equation*}
    \cbraces{\hab{\nrm{T_mf}_{L^{p_2}(\R^{\mz{d}_2},w_2;Y)},\nrm{f}_{L^{p_2}(\R^{\mz{d}_2},w_2;X)}}:f \colon \R^d \to X \text{ simple}}
  \end{equation*}
to deduce
\begin{equation*}
  \nrm{T_mf }_{L^{\mbs{p}}(\R^{d}_{\mz{d}} ,\mbs{w};Y)} \lesssim_{X,Y,\mz{d},\mbs{a},\mbs{p},\mbs{w}} C_m \nrm{f }_{L^{\mbs{p}}(\R^{d}_{\mz{d}} ,\mbs{w};X) }
\end{equation*}
for all simple $f$, which implies the result by density.
\end{proof}

\begin{remark}~
\begin{enumerate}[(i)]
\item The weight dependence of the implicit constant in Theorem \ref{theorem:mihlin} in the case $l=1$ is $[w]_{A_{p}(\R^d_{\mbs{a}})}^{\max\cbrace{\frac{1}{p-1},1}}$, which is sharp. For $l\geq 2$ the dependence our proof yields is more complicated and not sharp for all choices of $\mbs{p} \in (1,\infty)^l$.
  \item In the proof of Theorem \ref{theorem:mihlin} we only use the $\mc{R}$-boundedness of the set
  $$
  \cbraceb{\abs{\xi}_{\mbs{a}}^{\mbs{a} \cdot \theta}\cdot  \partial^\theta m(\xi): \xi \in \R^d\setminus\cbrace{0}}
   $$
   for $\theta  \in \cbrace{0,1}^d$. For all other $\theta \in \N^d$ with $\mbs{a} \cdot \theta \leq N$ it suffices to know uniform boundedness of this set.
\item One could reduce the number of derivatives necessary in Theorem \ref{theorem:mihlin}, by arguing as in \cite{Hy04} instead of using \cite[Lemma 4.4.6 and 4.4.7]{Li14b}. See also \cite[Section 6]{FHL18}.
\item Using the sparse domination of Theorem \ref{theorem:mihlin} one can also deduce two-weight estimates for $T_m$ as in \cite[Section 6]{FHL18}.
\end{enumerate}
\end{remark}

\section{The Rademacher maximal function}\label{section:maximal}
In this section we will apply Theorem \ref{theorem:localsparse} to the Rademacher maximal function. The proofs will illustrate very nicely how the geometry of the Banach space plays a role in deducing the localized $\ell^r$-estimate for this operator. In particular, we will use the type of a Banach space $X$ to deduce the localized $\ell^r$-estimate for the Rademacher maximal function.

 The Rademacher maximal function was introduced by Hyt\"onen, McIntosh and Portal in \cite{HMP08} as a vector-valued generalization of Doob's maximal function that takes into account the different ``directions'' in a Banach space. They used the Rademacher maximal function to prove a Carleson's embedding theorem for vector-valued functions in connection to Kato's square root problem in Banach spaces. The Carleson's embedding theorem for vector-valued functions has since found many other applications, like the local vector-valued $T(b)$ theorem (see \cite{HV15}).

Let $(S,d,\mu)$ be a space of homogeneous type with a dyadic system $\ms{D}$ and let $X$ be a Banach space.
For $f \in L^1_{\loc}(S;X)$ we define the \emph{Rademacher maximal function} by
\begin{align*}
  M_{\Rad}^{\ms{D}}f(s)&:= \sup\cbraces{\nrms{\sum_{Q \in \ms{D}:s \in Q}\varepsilon_Q\lambda_Q \ip{f}_{1,Q}}_{L^2(\Omega;X)}: \\&\hspace{2cm}(\lambda_Q)_{Q \in \ms{D}} \text{ finitely non-zero with } \sum_{Q \in \ms{D}}\abs{\lambda_Q}^2\leq 1},
\end{align*}
where $(\varepsilon_Q)_{Q \in \ms{D}}$ is a Rademacher sequence on $\Omega$. One can interpret this maximal function as Doob's maximal function
 \begin{equation*}
  f^*(s): =\sup_{Q \in \ms{D}:s \in Q} \nrmb{\ip{f}_{1,Q}}_X, \qquad s\in S,
\end{equation*}
 with the uniform bound over the $\ip{f}_{1,Q}$'s replaced by the $\mc{R}$-bound. Here the $\mc{R}$-bound of a set $U\subseteq X$ is the $\mc{R}$-bound of the family of operators $T_x:\C \to X$ given by $\lambda \mapsto \lambda x$ for $x \in U$.

We say that the Banach space $X$ has the $\RMF$ property if $M_{\Rad}^{\ms{D}[0,1)}$ is a bounded operator on $L^p([0,1);X)$ for some $p \in (1,\infty)$, where
$$\ms{D}[0,1) := \cbraceb{2^{-k} [j-1,j): k\in \N\cup\cbrace{0},\, j=1,\ldots,2^k}$$
is the standard dyadic system in $[0,1)$.
It was shown by Hyt\"onen, McIntosh and Portal \cite[Proposition 7.1]{HMP08} that this implies boundedness for all $p \in (1,\infty)$ and by Kemppainen \cite[Theorem 5.1]{Ke11}  that  this implies boundedness of $M_{\Rad}^{\ms{D}}$  on $L^p(S;X)$ for any space of homogeneous type $(S,d,\mu)$ with a dyadic system $\ms{D}$.

The relation of  $\RMF$ property to other Banach space properties is not yet fully understood. However, we do have some necessary and sufficient conditions:
\begin{itemize}
  \item The $\mc{R}$-bound of a set $U\subseteq X$ is equivalent to the uniform bound of that set if and only if $X$ has type $2$ (see \cite[Proposition 8.6.1]{HNVW17}). Therefore if $X$ has type $2$ we have for any $f \in L^1_{\loc}([0,1);X)$ that $M_{\Rad}^{\ms{D}[0,1)}f \lesssim M^{\ms{D}[0,1)}(\nrm{f}_X)$, so $X$ has the $\RMF$ property.
  \item Any $\UMD$ Banach lattice has the $\RMF$ property, see also the discussion related to the Hardy--Littlewood maximal operator at the end of this section.
  \item Non-commutative $L^p$-spaces for $p \in (1,\infty)$ have the $\RMF$ property, see \cite[Corollary 7.6]{HMP08}.
  \item The $\RMF$ property implies nontrivial type, see \cite[Proposition 4.2]{Ke11}.
\end{itemize}
It is an open problem whether nontrivial type or even the $\UMD$ property implies the $\RMF$ property.

\bigskip

Weighted bounds for the Rademacher maximal function in the Euclidean setting were studied by Kemppainen \cite[Theorem 1]{Ke13}. The proof was based on a good-$\lambda$ inequality, which does not give sharp quantitative estimates in terms of the weight characteristic. Using Theorem \ref{theorem:localsparse} we can prove sharp quantitative weighted estimates for the Rademacher maximal function through sparse domination. We will not consider the situation in which $X$ has type $2$, as this case follows directly from
$M_{\Rad}^{\ms{D}[0,1)}f \lesssim M^{\ms{D}[0,1)}(\nrm{f}_X)$ and the well-known sparse domination for the Hardy--Littlewood maximal operator.

We will need a version of the Rademacher maximal function for finite collections of cubes. For a subcollection of cubes $\mc{D} \subseteq \ms{D}$ we define  $M_{\Rad}^{\mc{D}}$ analogous to $M_{\Rad}^{\ms{D}}$.

\begin{theorem}\label{theorem:RMF}
  Let $(S,d,\mu)$ be a space of homogeneous type with a dyadic system $\ms{D}$ and let $X$ be a Banach space with the $\RMF$ property. Assume that $X$ has type $r$ for $r \in [1,2)$. For any finite collection of cubes $\mc{D} \subseteq \ms{D}$ and  $f \in L^1(S;X)$ there exists an $\frac12$-sparse collection of cubes $\mc{S}\subseteq \ms{D}$ such that
  \begin{equation*}
    M_{\Rad}^{\mc{D}}f(s) \lesssim_{X,S,\ms{D},r} \has{\sum_{Q \in \mc{S}}\ipb{\nrm{f}_X}_{1,Q}^{(\frac1r-\frac12)^{-1}}\ind_Q(s)}^{\frac1r-\frac12}, \qquad s\in S
  \end{equation*}
Moreover, for all $p \in (1,\infty)$ and $w\in A_p$ we have
  \begin{align*}
    \nrmb{M_{\Rad}^{\ms{D}}}_{L^p(S,w;X) \to L^p(S,w;X)} &\lesssim_{X,S,\ms{D},p,r} [w]_{A_p}^{\max\cbraceb{\frac{1}{p-1},\frac{1}{r}-\frac{1}{2}}}.
  \end{align*}
\end{theorem}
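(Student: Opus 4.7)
The plan is to vectorize $M^{\mc{D}}_{\Rad}$ into a linear operator into a Rademacher-type Banach space $Z$, chosen so that the sharp grand maximal truncation operator of Theorem \ref{theorem:localsparse} vanishes identically. Let $Z$ be the space of finitely supported sequences $x=(x_P)_{P\in\ms{D}}$ in $X$ endowed with
$$\nrm{x}_Z := \sup_{\lambda \,:\, \sum_P |\lambda_P|^2 \leq 1} \nrms{\sum_{P\in\ms{D}}\varepsilon_P\lambda_P x_P}_{L^2(\Omega;X)},$$
and define $T:L^1(S;X)\to L^0(S;Z)$ by $Tf(s):=(\ind_P(s)\ip{f}_{1,P})_{P\in\mc{D}}$, so that $\nrm{Tf(s)}_Z = M^{\mc{D}}_{\Rad}f(s)$. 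For $Q\in\ms{D}$ set $T_Qf(s):=(\ind_{P\subseteq Q}\ind_P(s)\ip{f}_{1,P})_{P\in\mc{D}}$ on $Q$, which gives a $1$-localization family with $\nrm{T_Qf(s)}_Z = M^{\mc{D}\cap\ms{D}(Q)}_{\Rad}f(s)$. Since $\mc{D}$ is finite we may choose $Q_*\in\ms{D}$ containing $\bigcup\mc{D}$, so it suffices to apply Theorem \ref{theorem:localsparse} at $Q_*$.

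The critical observation is that $\mc{M}^{\#}_{T,Q}\equiv 0$. Fix $Q'\in\ms{D}(Q)$ and $s\in Q'$: the vector $(T_Q-T_{Q'})f(s)$ has $P$-th component $\ind_P(s)\ip{f}_{1,P}$ exactly for those $P\in\mc{D}$ with $P\subseteq Q$ and $P\not\subseteq Q'$, and zero otherwise. By dyadic nesting any such $P$ either satisfies $P\supseteq Q'$ (so $s\in Q'\subseteq P$, $\ind_P(s)=1$) or $P\cap Q'=\emptyset$ (so $\ind_P(s)=0$); in either case the component is independent of $s\in Q'$, so the oscillation on $Q'$ in $Z$ vanishes. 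The weak $(1,1)$-boundedness of $T$ coincides with that of $M^{\mc{D}}_{\Rad}$ as a scalar operator, which follows from the $\RMF$ property via a dyadic Calder\'on--Zygmund decomposition starting from the $L^p$-bounds ($p>1$) of \cite{Ke11}.

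For the localized $\ell^q$-estimate with $q:=(\tfrac{1}{r}-\tfrac{1}{2})^{-1}$, fix $s\in Q_n\subseteq\ldots\subseteq Q_1$ and $\lambda$ with $\sum_P|\lambda_P|^2\leq 1$. Partition the $P\in\mc{D}\cap\ms{D}(Q_1)$ containing $s$ as $\bigsqcup_{k=1}^n\mc{A}_k$ with $\mc{A}_k:=\{P\in\mc{D}:Q_{k+1}\subsetneq P\subseteq Q_k\}$ for $k<n$ (independent of $s$, since $s\in Q_{k+1}\subseteq P$) and $\mc{A}_n:=\{P\in\mc{D}\cap\ms{D}(Q_n):s\in P\}$. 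The partial sums $S_k:=\sum_{P\in\mc{A}_k}\varepsilon_P\lambda_P\ip{f}_{1,P}$ are independent, symmetric and mean-zero in $X$, so symmetrization against an auxiliary Rademacher sequence, the type-$r$ inequality of $X$ applied conditionally, and Minkowski's inequality in $L^{2/r}(\Omega)$ yield
$$\nrms{\sum_{k=1}^n S_k}_{L^2(\Omega;X)} \lesssim_X \has{\sum_{k=1}^n \nrm{S_k}_{L^2(\Omega;X)}^r}^{1/r}.$$
Set $c_k:=\nrm{\lambda|_{\mc{A}_k}}_{\ell^2}$ and $\widetilde{M}_k:=\sup_{\nrm{\mu}_{\ell^2}\leq 1}\nrms{\sum_{P\in\mc{A}_k}\varepsilon_P\mu_P\ip{f}_{1,P}}_{L^2(\Omega;X)}$; then $\nrm{S_k}_{L^2}\leq c_k\widetilde{M}_k$ and $\sum_k c_k^2\leq 1$, so H\"older's inequality with conjugate exponents $\tfrac{2}{r}$ and $\tfrac{q}{r}$ gives $(\sum_k c_k^r\widetilde{M}_k^r)^{1/r}\leq(\sum_k\widetilde{M}_k^q)^{1/q}$. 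The vanishing-oscillation analysis identifies $\widetilde{M}_n=\nrm{T_{Q_n}f(s)}_Z$ and $\widetilde{M}_k=\nrm{T_{Q_k\setminus Q_{k+1}}f(s)}_Z$ for $k<n$, producing the localized $\ell^q$-estimate after taking the supremum over $\lambda$.

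Theorem \ref{theorem:localsparse} now delivers a $\tfrac{1}{2}$-sparse $\mc{S}\subseteq\ms{D}(Q_*)$ with the desired pointwise bound, valid trivially outside $Q_*$ since both sides vanish there, and Proposition \ref{proposition:weights} applied with $p_0=1$ and inner exponent $q$ converts this into the weighted estimate with weight exponent $\max\{\tfrac{1}{p-1},\tfrac{1}{q}\}=\max\{\tfrac{1}{p-1},\tfrac{1}{r}-\tfrac{1}{2}\}$. The main obstacle is finding the right codomain $Z$: with the scalar-valued $M^{\mc{D}}_{\Rad}$ the arithmetic difference $T_Q-T_{Q'}$ genuinely oscillates on $Q'$ (since the two suprema involve different sets of cubes depending on $s$), whereas in $Z$ the difference is literally constant, which is what both makes $\mc{M}^{\#}_{T,Q}$ vanish and slots the layer-Rademacher sums cleanly into the type-$r$ inequality.
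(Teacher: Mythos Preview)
Your proof follows the same route as the paper's: the linearization into $Z\cong\mc{L}(\ell^2(\mc{D}),L^2(\Omega;X))$, the $1$-localization $T_Q=M_{\Rad}^{\mc{D}(Q)}$, the observation that $T_{Q\setminus Q'}f$ is constant on $Q'$ so $\mc{M}^{\#}_{T,Q}\equiv 0$, and the randomization/type-$r$/H\"older verification of the localized $\ell^q$-estimate are all the paper's own steps (your probabilistic passage is simply a more explicit unpacking of the fact that $L^2(\Omega;X)$ inherits type $r$ from $X$).

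There is one slip: the claim that a single $Q_*\in\ms{D}$ can be chosen to contain $\bigcup\mc{D}$ is false in general. In the standard dyadic grid on $\R^d$, for instance, cubes lying in different quadrant towers never share a common dyadic ancestor, so a finite $\mc{D}$ whose maximal cubes straddle such a boundary admits no containing $Q_*$. The paper handles this by letting $\mc{D}'\subseteq\mc{D}$ be the (pairwise disjoint) maximal cubes of $\mc{D}$, applying Theorem~\ref{theorem:localsparse} on each $Q\in\mc{D}'$ separately, and setting $\mc{S}:=\bigcup_{Q\in\mc{D}'}\mc{S}_Q$; since $T_Qf=M_{\Rad}^{\mc{D}}f$ on each maximal $Q$ and the operator vanishes outside $\bigcup\mc{D}'$, this yields the global sparse bound. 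You should also make explicit the final monotone-convergence step taking the uniform-in-$\mc{D}$ weighted bound for $M_{\Rad}^{\mc{D}}$ to the claimed bound for $M_{\Rad}^{\ms{D}}$.
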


\begin{proof}
Fix a finite collection of cubes $\mc{D} \subseteq \ms{D}$. By \cite[Proposition 6.1]{Ke11} $M_{\Rad}^{\mc{D}}$  is weak $L^1$-bounded.
We will view $M_{\Rad}^{\mc{D}}$ as a bounded operator
  \begin{equation*}
    M_{\Rad}^{\mc{D}}: L^1(S;X) \to L^{1,\infty}(S; \mc{L}(\ell^2(\mc{D}),L^2(\Omega;X)))
  \end{equation*}
  given by
  \begin{equation*}
    M_{\Rad}^{\mc{D}}f(s) = \has{(\lambda_Q)_{Q \in \mc{D}} \mapsto \sum_{Q \in \ms{D}:s \in Q} \varepsilon_Q \lambda_Q \,\ip{f}_{1,Q}}, \qquad s \in S,
  \end{equation*}
  where $(\varepsilon_Q)_{Q \in \mc{D}}$ is a Rademacher sequence on $\Omega$.

  For $Q \in \ms{D}$ set
  $$\mc{D}(Q):=\cbrace{P \in\mc{D}:P \subseteq Q}$$
   and define $T_Q:= M_{\Rad}^{\mc{D}(Q)}$. Then $\cbrace{T_Q}_{Q \in \ms{D}}$ is a $1$-localization family for $M_{\Rad}^{\mc{D}}$.
 Furthermore we have for $f \in L^1(S;X)$ and $s \in Q \in \ms{D}$ that
  \begin{align*}
    \mc{M}^{\#}_{M_{\Rad}^{\mc{D}},Q}f(s)  &= \sup_{\substack{Q' \in \ms{D}(Q):\\s \in Q'}}  \esssup_{s' ,s'' \in Q'} \nrmb{T_{Q\setminus Q'}f(s')-T_{Q\setminus Q'}f(s'')}_{\mc{L}(\ell^2(\ms{D}),L^2(\Omega;X))}\\&=0
  \end{align*}
  where the second step follows from the fact that $T_{Q\setminus Q'}f = M_{\Rad}^{\mc{D}(Q)\setminus \mc{D}(Q')}f$ is constant on $Q'$. So $\mc{M}^{\#}_{M_{\Rad}^{\mc{D}},Q}$ is trivially bounded from $L^1(S;X)$ to $L^{1,\infty}(S)$.

Set  $q:=(\frac1r-\frac12)^{-1}$. To check the localized $\ell^q$-estimate for $M_{\Rad}^{\mc{D}}$ take $Q_1,\ldots,Q_n \in \ms{D}$ with $Q_n\subseteq \ldots\subseteq Q_1$. Let $(\lambda_Q)_{Q \in \mc{D}} \in \ell^2(\mc{D})$ be of norm one and let $(\varepsilon_Q)_{Q \in \mc{D}}$ and $(\varepsilon'_k)_{k=1}^n$ be Rademacher sequences on $\Omega$ and $\Omega'$ respectively. Define for $k=1,\ldots,n-1$
\begin{equation*}
  \lambda_k := \has{\sum_{Q \in \mc{D}\ha{Q_{k+1}}\setminus \mc{D}\ha{Q_k}}
\abs{\lambda_Q}^2}^{1/2}, \qquad \lambda_{n} := \has{\sum_{Q \in \mc{D}\ha{Q_n}} \abs{\lambda_Q}^2}^{1/2}
\end{equation*}
 Then for $f \in L^1(S;X)$, setting $f_Q := \varepsilon_Q \lambda_Q\ip{f}_{1,Q}$,  we have
  \begin{align*}
    \nrms{\sum_{Q \in \mc{D}\ha{Q_1}} &\varepsilon_Q \lambda_Q\ip{f}_{1,Q}}_{L^2(\Omega;X)}\\
    &= \nrms{\varepsilon_n'\sum_{Q \in \mc{D}\ha{Q_n}} f_Q+ \sum_{k=1}^{n-1}\varepsilon_{k}'\sum_{Q\in \mc{D}\ha{Q_{k+1}}\setminus \mc{D}\ha{Q_k}} f_Q}_{L^2(\Omega\times \Omega';X)}\\
    &\lesssim_{X,r} \has{\lambda_n^r \nrms{\sum_{Q \in \mc{D}\ha{Q_n}}  \lambda_n^{-1}f_Q}^r_{L^2(\Omega;X)} \\&\hspace{1cm}+ \sum_{k=1}^{n-1}\lambda_{k}^r \nrms{\sum_{Q\in \mc{D}\ha{Q_{k+1}}\setminus \mc{D}\ha{Q_k}} \lambda_{k}^{-1}f_Q}_{L^2(\Omega;X)}^r}^{1/r}\\
    &\leq\has{ \nrms{\sum_{Q \in \mc{D}\ha{Q_n}}  \varepsilon_Q \lambda_n^{-1} \lambda_Q\ip{f}_{1,Q}}^{q}_{L^2(\Omega;X)} \\&\hspace{1cm}+ \sum_{k=1}^{n-1} \nrms{\sum_{Q\in \mc{D}\ha{Q_{k+1}}\setminus \mc{D}\ha{Q_k}} \varepsilon_Q \lambda_{k}^{-1}\lambda_Q\ip{f}_{1,Q}}_{L^2(\Omega;X)}^{q}}^{1/q},
  \end{align*}
using randomization (see \cite[Proposition 6.1.11]{HNVW17}) in the first step, type $r$ of $X$ in the second step, and H\"older's inequality and $\sum_{k=1}^n \lambda_k^2=1$ in the last step. Noting that for $k=1,\ldots,n-1$
\[\sum_{Q\in \mc{D}(Q_{k+1})\setminus \mc{D}(Q_k)} \abs{\lambda_{k}^{-1}{\lambda_Q}}^2=1,\qquad \sum_{Q\in {\mc{D}(Q_n)} } \abs{\lambda_{n}^{-1}{\lambda_Q}}^2=1,\] this implies the  localized $\ell^{q}$-estimate for $M_{\Rad}^{\mc{D}}$.

Having checked all assumptions of Theorem \ref{theorem:localsparse} for $M_{\Lat}^{\mc{D}}$ it follows that for any $Q \in \mc{D}$ there is a $\frac{1}{2}$-sparse collection of cubes $\mc{S}_Q\subseteq \ms{D}(Q)$ such that
\begin{align*}
  \nrmb{T_Q(s)}_Y\lesssim_{X,S,\ms{D},r} ,
  \has{ \sum_{P \in \mc{S}} \ipb{\nrm{f}_X}_{p,\alpha P}^r \ind_P(s)}^{1/r},\qquad s \in Q.
  \end{align*}
Let $\mc{D}'$ be the maximal cubes (with respect to set inclusion) of $\mc{D}$, which are pairwise disjoint. Then $\mc{S}:=\bigcup_{Q \in \mc{D}'} \mc{S}_Q$ is a $\frac{1}{2}$-sparse collection of cubes that satisfies the claimed sparse domination
as $T_Q(s) = M_{\Rad}^{\mc{D}}f(s)$ for any $s\in Q \in \mc{D}'$ and $M_{\Rad}^{\mc{D}}f$ is zero outside $\bigcup_{Q \in \mc{D}'}Q$.  The weighted bounds follow from Proposition \ref{proposition:weights} and the monotone convergence theorem.
\end{proof}

Let us check that the weighted estimate in Theorem \ref{theorem:RMF}, and consequently also the sparse domination in Theorem \ref{theorem:RMF}, is sharp. We take $X=\ell^r$ for $r \in (1,2)$, a prototypical Banach space with type $r$.  Since $\mc{R}$-bounds are stronger than uniform bounds, we note that for any strongly measurable $f \colon [0,1) \to \ell^q$ we have
\begin{equation*}
 f^*(s) \leq M_{\Rad}^{\ms{D}[0,1)}f(s),\qquad s \in [0,1).
\end{equation*}
Thus by the corresponding result for Doob's maximal operator (see \cite[Proposition 3.2.4]{HNVW16}), we have for $p \in (1,\infty)$
\begin{equation}\label{eq:radlowerbound}
  \nrmb{M_{\Rad}^{\ms{D}[0,1)}}_{L^p([0,1);\ell^r) \to L^p([0,1);\ell^r) } \geq \frac{p}{p-1}
\end{equation}
Now let $(e_n)_{n=1}^\infty$ be the canonical basis of $\ell^r$ and define
\begin{equation*}
  f(s):= \sum_{n=1}^\infty \ind_{[2^{-n},2^{-n+1})}(s) e_n, \qquad s \in [0,1).
\end{equation*}
For $p \in (1,\infty)$ we have
\begin{equation*}
  \nrm{f}_{L^p([0,1);\ell^r)} = 1.
\end{equation*}
To compute $\nrm{M_{\Rad}^{\ms{D}[0,1)}f}_{L^p([0,1);\ell^r)}$ set $I_j:= [0,2^{-j+1}]$, take $s \in $ and let $m \in \N$ be such that $2^{-m}\leq s \leq 2^{-m+1}$. Then we have, using $\lambda_{I_j} = m^{-1/2}$ for $j=1,\ldots,m$ and the Khintchine--Maurey inequalities (see \cite[Theorem 7.2.13]{HNVW17}), that
\begin{align*}
  M_{\Rad}^{\ms{D}[0,1)}f(s) &\geq \frac{1}{m^{1/2}}  \nrms{\sum_{j=1}^m \varepsilon_j\ip{f}_{1,I_j}}_{L^2(\Omega;\ell^r)}
  \gtrsim \frac{1}{m^{1/2}}\nrms{\has{\sum_{j=1}^m \ip{f}_{1,I_j}^2}^{1/2}}_{\ell^r}\\
  &\gtrsim \frac{1}{m^{1/2}}\nrms{\sum_{j=1}^m e_j}_{\ell^r}
  \gtrsim m^{1/r-1/2} \gtrsim {\log(1/s)}^{1/r-1/2}.
\end{align*}
Therefore we obtain
\begin{align*}
  \nrmb{M_{\Rad}^{\ms{D}[0,1)}f}_{L^p([0,1);\ell^r)}&\gtrsim \has{\int_0^1 {\log(1/s)}^{p/r-p/2} \dd s}^{1/p}
  \\&= \has{\int_1^\infty x^{p/r-p/2} \ee^{-x}\dd x}^{1/p} \\&\geq \has{\sum_{n=2}^\infty n^{p/r-p/2}\ee^{-n}}^{1/p} \\&\gtrsim p^{1/r-1/2},
\end{align*}
where we drop all terms except $n = \ceil{p}$ in the last step. Thus combined with \eqref{eq:radlowerbound} we find
\begin{equation*}
  \nrmb{M_{\Rad}^{\ms{D}[0,1)}}_{L^p([0,1);\ell^r) \to L^p([0,1);\ell^r) } \gtrsim \max\cbraces{\frac{1}{p-1}, p^{1/r-1/2}},
\end{equation*}
which implies that the weighted estimate in Theorem \ref{theorem:RMF} is sharp by \cite[Theorem 1.2]{LPR15}.

\bigskip

To finish this section we will compare the sparse domination for the Rademacher maximal operator in Theorem \ref{theorem:RMF} with the sparse domination for the lattice Hardy--Littlewood maximal operator obtained by H\"anninnen and the author in \cite[Theorem 1.3]{HL17}. Let $X$ be a Banach lattice with finite cotype and $\ms{D}$ the standard dyadic system in $\R^d$. For a simple function $f:\R^d \to X$ define  \emph{dyadic lattice Hardy--Littlewood maximal operator} (see e.g. \cite{GMT93}) by
\begin{equation}\label{eq:maxop}
  M_{\Lat}^{\ms{D}}f(s) : = \sup_{Q \in \ms{D}: s \in Q} \ipb{\abs{f}}_{1,Q}, \qquad s\in \R^d,
\end{equation}
where the absolute value and the supremum are taken in the lattice sense. By the Khintchine--Maurey inequalities (see e.g. \cite[Theorem 7.2.13]{HNVW17}) we have
      $$M_{\Rad}^{\ms{D}}f \lesssim M_{\Lat}^{\ms{D}}f$$
       for any simple $f\colon \R^d \to X$.
By \cite{Bo84,Ru86} we know  that $X$ has the $\UMD$ property if and only if
 $M_{\Lat}^{\ms{D}}$ is bounded on $L^p(\R^d;X)$ and $L^p(\R^d;X^*)$ for some (all) $p \in (1,\infty)$, which  implies that any $\UMD$ Banach lattice has the $\RMF$ property.

 Comparing the sparse domination result in  Theorem \ref{theorem:RMF} with the corresponding sparse domination result for the dyadic lattice Hardy--Littlewood maximal operator, we see that the sparse operator in Theorem \ref{theorem:RMF} is smaller than the sparse operator in \cite[Theorem 1.3]{HL17}. Moreover, the sparse domination for the lattice Hardy--Littlewood maximal operator  is sharp, as shown in \cite[Theorem 1.2]{HL17}. Therefore on any $\RMF$ Banach lattice that is not $\infty$-convex, the operators $M_{\Rad}^{\ms{D}}$ and $M_{\Lat}^{\ms{D}}$ are incomparable,  i.e. the (dyadic) lattice Hardy--Littlewood maximal operator is strictly larger than the Rademacher maximal operator. As the only $\infty$-convex $\RMF$ Banach lattices are the finite dimensional ones, we have the following corollary.

\begin{corollary}
  Let $X$ be an infinite dimensional $\RMF$ Banach lattice. Then there is no $C>0$ such that for all simple $f\colon \R^d \to X$
  $$
    M_{\Lat}^{\ms{D}}f\leq C \, M_{\Rad}^{\ms{D}}f.$$
\end{corollary}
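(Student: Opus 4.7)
The strategy is to argue by contradiction, playing Theorem \ref{theorem:RMF} against the sharpness of the $\ell^1$-sparse domination for the lattice maximal function in \cite[Theorem 1.2]{HL17}. So suppose for contradiction that there exists $C > 0$ with
\[
  M_{\Lat}^{\ms{D}} f(s) \leq C\, M_{\Rad}^{\ms{D}} f(s), \qquad s \in \R^d,
\]
for every simple $f\colon \R^d \to X$.

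Since $X$ has the $\RMF$ property, $X$ has nontrivial type, and since type $p$ is inherited by all $p' \leq p$, we may fix $r \in (1,2)$ such that $X$ has type $r$. Set $q := (1/r - 1/2)^{-1} \in (2,\infty)$. Applying Theorem \ref{theorem:RMF} to an increasing sequence of finite subfamilies $\mc{D} \uparrow \ms{D}$ (and extracting a limiting sparse family, using that each simple $f$ only probes finitely many dyadic cubes), we obtain for every simple $f$ a $\tfrac{1}{2}$-sparse collection $\mc{S}\subseteq\ms{D}$ with
\[
  M_{\Rad}^{\ms{D}}f(s) \lesssim_{X,\ms{D},r} \has{\sum_{Q \in \mc{S}} \ipb{\nrm{f}_X}_{1,Q}^{q}\, \ind_Q(s)}^{1/q}, \qquad s \in \R^d.
\]
The contradiction hypothesis then transfers the same $\ell^q$-sparse estimate (up to the factor $C$) to $M_{\Lat}^{\ms{D}}f$.

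On the other hand, since $X$ is infinite-dimensional, the discussion preceding the corollary shows that $X$ is not $\infty$-convex, so by \cite[Theorem 1.2]{HL17} the $\ell^1$-sparse domination for $M_{\Lat}^{\ms{D}}$ is sharp in the sense that the weighted exponent $(p-1)^{-1}$ cannot be improved. However, feeding the $\ell^q$-sparse bound we just derived into Proposition \ref{proposition:weights} yields the exponent $\max\cbrace{(p-1)^{-1},\, q^{-1}}$, which equals $q^{-1} < (p-1)^{-1}$ as soon as $p - 1 > q$. This contradicts the sharpness of the Buckley-type exponent for $M_{\Lat}^{\ms{D}}$, and so no such $C$ can exist.

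The main obstacle is to ensure that the sharpness result from \cite[Theorem 1.2]{HL17} is precisely strong enough to preclude the $\ell^q$-sparse bound we produce: one needs to know that, on any infinite-dimensional non-$\infty$-convex $\RMF$ Banach lattice, Buckley's exponent $(p-1)^{-1}$ is sharp for $M_{\Lat}^{\ms{D}}$ for some arbitrarily large $p$. Granting this, the remainder of the argument is pure bookkeeping: choosing $r \in (1,2)$ from nontrivial type, identifying the resulting $\ell^q$ exponent and comparing weighted norms.
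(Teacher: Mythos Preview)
Your overall strategy matches the paper's: assume $M_{\Lat}^{\ms{D}} \leq C\, M_{\Rad}^{\ms{D}}$, transfer the $\ell^q$-sparse domination of Theorem~\ref{theorem:RMF} (with $q=(1/r-1/2)^{-1}>1$) to $M_{\Lat}^{\ms{D}}$, and contradict the sharpness result \cite[Theorem 1.2]{HL17}. The paper phrases this directly at the level of sparse operators rather than routing through Proposition~\ref{proposition:weights}, but the idea is the same.

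There is, however, a genuine gap in the contradiction step. You write that for $p-1 > q$ one has $\max\{(p-1)^{-1}, q^{-1}\} = q^{-1} < (p-1)^{-1}$; but $p-1 > q$ means $(p-1)^{-1} < q^{-1}$, so the displayed inequality is backwards. More fundamentally, Buckley's exponent $(p-1)^{-1}$ is only a \emph{lower} bound for the sharp weighted exponent of $M_{\Lat}^{\ms{D}}$ (inherited trivially from the scalar maximal function acting on $\nrm{f}_X$), and since the $\ell^q$-sparse bound produces the exponent $\max\{(p-1)^{-1}, q^{-1}\} \geq (p-1)^{-1}$ for every $p$, comparing against $(p-1)^{-1}$ can never yield a contradiction. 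What \cite[Theorem 1.2]{HL17} actually gives is that on a non-$\infty$-convex lattice the optimal $\ell^r$-sparse exponent for $M_{\Lat}^{\ms{D}}$ is some finite $q_X$ strictly smaller than the $q$ coming from Theorem~\ref{theorem:RMF}; equivalently, the sharp weighted exponent of $M_{\Lat}^{\ms{D}}$ for large $p$ is $q_X^{-1}$, not $(p-1)^{-1}$. The contradiction is then that the hypothesised pointwise domination would force $M_{\Lat}^{\ms{D}}$ to admit $\ell^q$-sparse domination with $q > q_X$, giving weighted exponent $q^{-1} < q_X^{-1}$ for large $p$ and violating the optimality of $q_X$. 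Your ``main obstacle'' paragraph identifies the right place to worry, but with the wrong target exponent.
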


\section{Further Applications}\label{section:further}
 In this final section we comment on some further applications of our main theorems, for which we leave the details to the interested reader.
\begin{itemize}
\item Sparse domination and weighted bounds for variational truncations of Calder\'on--Zygmund operators were studied in \cite{FZ16, HLP13b, MTX15, MTX17}. The arguments presented in these references also imply the boundedness of our sharp grand maximal truncation operator and thus by Theorem \ref{theorem:main} yield sparse domination of the variational truncations of Cal\-der\'on--Zygmund operators.
\item In \cite{LOR17} Lerner, Ombrosi and Rivera-R\'ios show sparse domination for commutators of a $\BMO$ function $b$ with a Calder\'on--Zygmund operator using sparse operators adapted to the function $b$. By a slight adaptation of the arguments presented in the proof of Theorem \ref{theorem:localsparse}, one can prove the main result of \cite{LOR17} in our framework and extend it to the vector-valued setting and to spaces of homogeneous type.
\item H\"ormander--Mihlin type conditions as in \cite[Theorem IV.3.9]{GR85} imply the weak $L^{p_1}$-boundedness  of our maximal truncation operator for $p_1>n/a$ and thus sparse domination for the associated Fourier multiplier operator by Theorem \ref{theorem:main}. Vector-valued extensions under Fourier type assumptions can be found in \cite{GW03,Hy04} and Theorem \ref{theorem:main} may therefore also be used to prove weighted results in that setting.
\item In \cite{Le11} Lerner used his local mean oscillation decomposition to deduce sparse domination and sharp weighted norm inequalities for various Littlewood--Paley operators. These results are also an almost immediate consequence of Theorem \ref{theorem:localsparse} with $r=2$, using a truncation of the cone of aperture in the definition of a Littlewood--Paley operator in order to make the localized $\ell^2$-estimate checkable. Using similar arguments one can also treat the dyadic square function with Theorem \ref{theorem:localsparse}, which yields the sharp weighted norm inequalities as obtained by Cruz-Uribe, Martell and Perez \cite{CMP12}.

 Very recently Bui and Duong \cite{BD19} extended the results in \cite{Le11} to square functions of a general operator $L$ which has a Gaussian heat kernel bound and a bounded holomorphic functional calculus on $L^2(S)$, where $(S,d,\mu)$ is a space of homogeneous type. The arguments they present can also be used to estimate our sharp grand maximal truncation operator, so their result is also be treated by Theorem \ref{theorem:localsparse}.
\item Fackler, Hyt\"onen and Lindemulder \cite{FHL18} proved weighted vector-valued Littlewood-Paley theory on a $\UMD$ Banach space in order to prove their weighted, anisotropic, mixed-norm Mihlin multiplier theorems. Using Theorem \ref{theorem:main} and Proposition \ref{proposition:weights} on the Littlewood--Paley square function with smooth cut-offs one can prove sparse domination and weighted estimates in the smooth cut-off case. This can then be transferred to sharp cut-offs by standard arguments, recovering \cite[Theorem 3.4]{FHL18}.
\item  In \cite{PSX12} Potapov, Sukochev and Xu proved extrapolation upwards of unweighted vector-valued Littlewood--Paley--Rubio de Francia inequalities. Using \cite[Lemma 4.5]{PSX12}  one can check the weak $L^2$-boundedness of our sharp grand maximal truncation operator, which by Theorem \ref{theorem:main} and Proposition \ref{proposition:weights} yields sparse domination and weighted estimates for vector-valued Littlewood--Paley--Rubio de Francia estimates. In the scalar case sparse domination was shown by Garg, Roncal and Shrivastava \cite{GRS18} using time-frequency analysis.
 \item Theorem \ref{theorem:fractional} can be used to show sparse domination and sharp weighted estimates for fractional integral operators as in \cite{CB13, CB13b,Cr17,IRV18}. The boundedness of the sharp grand maximal truncation operator associated to these operators can be shown using a similar argument as we used in the proof of Theorem \ref{theorem:A2}.
\item In \cite{BFP16} Bernicot, Frey and Petermichl show that the sparse domination
principle is also applicable to non-integral singular operators falling
outside the scope of Calder\'on--Zygmund operators. Sparse domination for square functions related to these operators was studied in \cite{BBR20}.  The methods developed in these papers actually show the boundedness of the localized sharp grand $q$-maximal truncation operator used in Theorem \ref{theorem:sparseform}, so these results also fit in our framework.
\end{itemize}

\subsection*{Acknowledgement} The author would like to thank Dorothee Frey, Bas Nieraeth and Mark Veraar for their helpful comments on the draft version of this paper. Moreover the author would like to thank Luz Roncal for bringing one of the applications in Section \ref{section:A2} under the author's attention and Olli Tapiola for his remarks on the Lebesgue differentiation theorem in spaces of homogeneous type.

\newcommand{\etalchar}[1]{$^{#1}$}

\end{document}